\documentclass{article}
\usepackage{graphicx} 
\usepackage{amsmath,amssymb,amsfonts}
\usepackage{amsthm}
\usepackage{mathtools}

\usepackage{mathtools}
\usepackage{hyperref}
\usepackage{enumitem}

\usepackage[normalem]{ulem}

\usepackage[margin=1in]{geometry}

\usepackage{makecell}
\usepackage{cite}

\usepackage{todonotes}

\usepackage{cancel}

\usepackage{comment}

\newcommand{\norm}[2]{\left\Vert #1 \right\Vert_{#2}}

\newcommand{\dv}[0]{\mathrm{div} \, }
\newcommand{\dvh}[0]{\mathrm{div}_{\mathrm{h}} \, }
\newcommand{\mrm}[1]{\mathrm{#1}}
\newcommand{\nablah}[0]{\nabla_\mathrm{h}}

\newcommand{\dt}[0]{\partial_t}

\newcommand{\dz}[0]{\partial_z}

\renewcommand{\vec}[1]{\mathbf{#1}}

\theoremstyle{plain}
\newtheorem{theorem}{Theorem}[section]

\newtheorem{lemma}[theorem]{Lemma}
\newtheorem{proposition}[theorem]{Proposition}

\theoremstyle{remark}
\newtheorem{remark}{Remark}[section]

\theoremstyle{definition}
\newtheorem{definition}{Definition}[section]

\newcommand{\divh}{\mathrm{div}_h}

\numberwithin{equation}{section}

\allowdisplaybreaks

\title{Rigorous justification of the hydrostatic-incompressible approximation for weakly stratified isothermal flow}
\author{Quyuan Lin\footnote{\href{mailto:quyuanl@clemson.edu}{quyuanl@clemson.edu}, School of Mathematical and Statistical Sciences, Clemson University, Clemson, SC 29634, USA}, 
Xin Liu\footnote{\href{mailto:xliu23@tamu.edu}{xliu23@tamu.edu}, Department of Mathematics, Texas A\&M University, College Station, TX 77843-3368, USA}}
\date{\today}

\begin{document}

\maketitle

\begin{abstract}
    We consider the limit of small Mach number and small vertical-to-horizontal aspect ratio for the isothermal compressible Navier-Stokes system. In addition, we consider the scale in which the stratification is weak. Owing to the anisotropic nature of the problem, the dynamics exhibit a three-wave separation phenomenon, consistent of a slow wave, a fast horizontal acoustic wave, and an even faster vertical acoustic wave. These three waves, unfortunately, are not mutually orthogonal, and the corresponding projections are parametrized by the small parameter, which significantly complicates the nonlinear analysis. Without any restriction on the size of the initial waves, we establish the uniform existence and uniqueness of solutions to the compressible Navier-Stokes system for any fixed small parameter, by carefully analyzing the evolutions of both the energy and the acoustic waves. Moreover, we prove that, as the small parameter tends to zero, the solutions converge to that of the incompressible primitive equations governing atmospheric and oceanic flows. 

    \medskip

    {\noindent\bf Keywords:} Singular limit; Incompressible limit; Hydrostatic approximation; Isothermal flow; Acoustic waves.  

    {\noindent\bf MSC2020:}
    35Q30; 35Q86; 76D05; 76N10; 76N30. 
\end{abstract}

\tableofcontents

\section{Introduction}
\label{sec:introduction}

In the large scale geophysical flows, including flows in the atmosphere, the ocean, and the Great Lakes, 
the vertical scale is much smaller than the horizontal scale. Moreover, the most interesting and relevant scales in these geophysical flows usually admit small Mach and Froude numbers. See Section \ref{subsec:typical_values}, below. 

In this paper, we are interested in the physical scale for the ocean and the Great Lakes (see Table \ref{tb:typical_values}, below). To be more precise, we consider the following dimensionless isothermal Navier-Stokes equations (NSEs):
\begin{subequations}
    \label{sys:hst-bsnq-CNS-intro}
    \begin{gather}
        \label{hbCNS-continuity-intro}
        \partial_t \rho + \dvh (\rho \vec v) + \partial_z (\rho w) = 0, \\
        \label{hbCNS-h-momentum-intro}
        \begin{gathered}
        \dt (\rho \vec v) + \dvh (\rho \vec v\otimes \vec v) + \dz (\rho w \vec v) + \frac{1}{\varepsilon^2} \nablah p(\rho) 
        = \Delta \vec v,
        \end{gathered} \\
        \label{hbCNS-v-momentum-intro}
        \begin{gathered}
        \dt(\rho w) + \dvh(\rho w \vec v) + \dz(\rho w w ) + \frac{1}{\varepsilon^4} \dz \rho = - \frac{1}{\varepsilon^3} \rho \dz g(z) 
        + \Delta w,
    \end{gathered}
    \end{gather}
\end{subequations}
where the aspect ratio, Mach number, and Froude number are assumed to be of the same small order $\varepsilon\ll1$. Here $ \rho $ is the density, $ p(\rho) = \rho $ is the isothermal pressure potential, $ \vec v $ and $ w $ are the horizontal and the vertical velocity fields, and $ g = g(z) $ is the gravity potential. See section \ref{sec:nndmsnlsn}, below for the nondimensionalization.

Our goal is to rigorously justify the asymptotic limit of system \eqref{sys:hst-bsnq-CNS-intro}, as $\varepsilon\to 0$. The limit system will be shown to be the incompressible primitive equations (PEs)
\begin{subequations}
    \label{sys:hst-bsnq-intro}
    \begin{gather}
        \label{eq:continuity-hb-intro}
        \dvh \vec v_p + \dz w_p = 0, \\
        \label{eq:h-momentum-hb-intro}
        \partial_t \vec v_p + \vec v_p \cdot \nablah \vec v_p + w_p \dz \vec v_p + \nablah \Pi_p = \Delta v_p,\\
        \label{eq:hstc-hb-intro}
        \dz \Pi_p = 0, 
    \end{gather}
\end{subequations}
where $\vec v_p$, $w_p$, and $\Pi_p$ are the horizontal velocity, vertical velocity, and pressure, respectively.

\smallskip

The incompressible PEs, also known as the hydrostatic NSEs, have been widely used as an approximation model in the study of oceanic dynamics (see, e.g., \cite{blumen1972geostrophic,gill1976adjustment,gill1982atmosphere,hermann1993energetics,holton1973introduction,kuo1997time,plougonven2005lagrangian,rossby1938mutual} and references therein). System \eqref{sys:hst-bsnq-intro}
has been rigorously
derived as the asymptotic limit of the small aspect ratio from the incompressible NSEs in the weak sense \cite{azerad2001mathematical} and in the strong sense 
\cite{li2019primitive}. See also \cite{furukawa2020rigorous} for the case with different initial conditions and \cite{li2022primitive} for the case with only horizontal viscosity. The rigorous derivation of incompressible PEs with temperature evolution from Boussinesq equations remains open. Unlike the three-dimensional NSEs of which the global well-posedness is unknown, the incompressible PEs are globally well-posed in 3D. The latter was first established in the pioneer work  \cite{Cao2007}. See also \cite{kobelkov2006existence} for an alternative approach, \cite{kukavica2007regularity} for different boundary conditions, as well as \cite{hieber2016global} for some progress towards relaxing the smoothness on the initial data by using the maximum regularity and the semigroup method.

The inviscid counterpart of the incompressible PEs have been derived from the incompressible Euler equations with initial data satisfying the local Rayleigh condition or being analytic \cite{brenier1999homogeneous,brenier2003remarks,grenier1999derivation,masmoudi2012h}. Unlike the viscous version, the inviscid system is in general ill-posed in Sobolev spaces \cite{renardy2009ill,han2016ill,ibrahim2021finite}, and its smooth solution can form singularity in finite time \cite{cao2015finite,collot2023stable,ibrahim2026profile,wong2015blowup,ibrahim2021finite}. These strongly suggest the necessity of viscosity in system \eqref{sys:hst-bsnq-CNS-intro} in order to prove the convergence for general initial data.

\smallskip 

For the compressible isothermal Navier--Stokes equations \eqref{sys:hst-bsnq-CNS-intro}, global weak solutions with small initial data were constructed in \cite{hoffGlobalSolutionsNavierStokes1995a}. In contrast, without a smallness assumption, the classical Lions--Feireisl theory for isentropic flows \cite{Lions1998,Feireisl2001} is not available in the isothermal case. The local well-posedness of system \eqref{sys:hst-bsnq-CNS-intro} follows from the classical works \cite{Hoff2012,Itaya1971}.

The study of singular limits in compressible flows was initiated by pioneering works in the incompressible/small Mach number limit \cite{Klainerman1981,Klainerman1982} for well-prepared initial data, namely data close to the incompressible regime. For general initial data in the whole space, acoustic waves disperse, as shown in \cite{Ukai1986}. In periodic domains, however, acoustic waves may resonate with themselves. Nevertheless, the mean flow can be separated from the acoustic component in the nonlinear dynamics, as established in \cite{Lions1998a,Masmoudi2001}. Consequently, in these settings, the incompressible system provides a valid approximation of the full compressible system in the small Mach number regime. In contrast, the small Mach number limit in bounded domains with nontrivial acoustic waves remains an open problem. 

When gravity is taken into account, the resulting stratification makes the asymptotic limit even less clear. For weak solutions with well-prepared data, the incompressible limit under both weak and strong stratification was established in \cite{feireislSingularLimitsThermodynamics2017}. In the absence of acoustic waves, the Boussinesq approximation, namely the incompressible limit with weak stratification, was justified in \cite{rajagopalOberbeckboussinesqApproximation1996}. These justifications for general initial data remain open. 

The hydrostatic limit for compressible isothermal flows was rigorously justified in \cite{liuRigorousJustificationHydrostatic2024}, where the compressible primitive equations arise as the limit system.
The latter was shown to be locally well-posed \cite{liuLocalWellPosednessStrong2021} and to admit global weak solutions with degenerate viscosity \cite{LT2018b}. The small Mach/incompressible limit of the compressible primitive equations was established in \cite{liuZeroMachNumber2023,LT2018LowMach1} for well-prepared and ill-prepared initial data. We also refer to \cite{necasovaEnergyEqualityCompressible2024,tangDerivationInviscidCompressible2023,Ersoy2012,Ersoy2011a,hieberLagrangianApproachCompressible2025,gaoHydrostaticApproximationCompressible2022} for related works on the compressible primitive equations.

\smallskip 

For general systems of PDEs with fast oscillation, the first general theory was developed by Schochet in \cite{Schochet1994} for two-scales systems; see also \cite{schochetSingularLimitsBounded1987,SchochetCMP1986,Gallagher1998} for further developments. Schochet's theory has been applied to singular limits in fluid dynamics, especially to geophysical flows, in \cite{embidAveragingFastGravity1996,majdaAveragingFastGravity1998,embidLowFroudeNumber1998,MajdaAtmosphereOcean}, to mention a few.
More recently, increasing attention has been devoted to systems involving three distinct scales; see, for instance, \cite{schochetUniformExistenceConvergence2022,chengThreeScaleSingularLimits2018,schochetModeratelyFastThreeScale2020}. The systems considered in these works are of the form
\begin{equation}
\label{sys:3-scales-system}
    A^0(\varepsilon U) \partial_t U + \frac{1}{\varepsilon}\mathcal L U + \frac{1}{\delta} \mathcal M U + N(U) = 0,
\end{equation}
where $A^0(0)$, $\mathcal L$, and $\mathcal M$ are linear operators with constant coefficients, while $N(U)$ denotes the nonlinear terms. Under suitable restrictions on the parameters $\varepsilon$ and $\delta$, together with appropriate prepared initial data, these works establish uniform estimates and identify the corresponding asymptotic limits. Unfortunately, our system \eqref{sys:hst-bsnq-CNS-intro} (equivalently, \eqref{sys:hst-bsnq-sym}, below) is not of the form \eqref{sys:3-scales-system}, and therefore, should be investigated independently. 

\smallskip

Meanwhile in \cite{Klein2010,breschSoundproofModelAcoustic2022}, assuming that $ \delta = \delta(\varepsilon) $ and for the modified compressible Euler system with fast acoustic and internal waves, the authors established the uniform existence and the asymptotic limit without any restriction on the initial data. Their analysis relies crucially on a clear separation of the slow, fast, and faster wave components. Moreover, the projection operators onto these wave modes are regular: They are bounded operators without loss of regularity in any component. However, such a regular decomposition is no longer available for system \eqref{sys:hst-bsnq-CNS-intro}; see Definition \ref{def:wave-proj} below. This is caused by the anisotropic structure induced by the small aspect ratio. In particular, the very fast vertical acoustic wave carries a singular source term on the right-hand side; see \eqref{wdpt-105} below.

Moreover, the anisotropic structure of system \eqref{sys:hst-bsnq-CNS-intro} leads to an energy estimate that degenerates in the vertical velocity $w$. In contrast to \cite{li2019primitive,li2022primitive}, here the incompressibility condition is not available to recover uniform regularity of $w$. We therefore follow the approach of \cite{liuRigorousJustificationHydrostatic2024}. By studying the associated acoustic wave equation and exploiting the continuity equation \eqref{hbCNS-continuity-intro}, we recover the required uniform regularity of $w$. Another main difference from \cite{liuRigorousJustificationHydrostatic2024,liuZeroMachNumber2023} is the presence of gravity in system \eqref{sys:hst-bsnq-CNS-intro}. As shown in Section \ref{subsec:non-prd-non-cnvg} below, gravity may in general cause instability in the limit $\varepsilon \rightarrow 0$. We address this issue by introducing an artificial gravity potential satisfying $\dz g = \mathcal O(\varepsilon)$ and by developing a weighted energy estimate that incorporates the effect of stratification; see Section \ref{subsec:ln-energy-stuct} and remarks \ref{rm:loss-regularity}, \ref{rm:weights}, below. 

\smallskip

We postpone the precise statements of the main results to Theorem~\ref{thm:uniform-est} and Theorem~\ref{thm:limit} in Section~\ref{subsec:reformulation} below. 
To conclude the introduction, we state the main results informally as follows.

\begin{theorem}[Uniform stability]
    \label{informal-thm:uniform-stability}
    Without any restriction on the size of the initial slow, fast, and very fast waves (see Remark \ref{rm:initial_data}, below), system \eqref{sys:hst-bsnq-CNS-intro} admits a unique strong solutions $ (\rho, \vec v, w) $ on a time interval independent of $\varepsilon$. Moreover, the solution satisfies a uniform-in-$\varepsilon$ regularity bound. 
\end{theorem}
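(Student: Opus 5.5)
The plan is a continuity argument: a uniform-in-$\varepsilon$ a priori estimate combined with the classical local well-posedness of \cite{Hoff2012,Itaya1971}. First I reformulate \eqref{sys:hst-bsnq-CNS-intro} around the stratified hydrostatic profile $\rho_s(z)$, defined by $\dz \rho_s = -\varepsilon \rho_s \dz g$. The artificial gravity $\dz g = \mathcal O(\varepsilon)$ makes $\rho_s = 1+\mathcal O(\varepsilon^2)$. I then write $\rho = \rho_s(1+\varepsilon^2 \sigma)$, or use a logarithmic variable $q = \log(\rho/\rho_s)/\varepsilon$, and symmetrize. The singular part of the resulting system is
\[
\frac{1}{\varepsilon}(\nablah q, \dvh \vec v) + \frac{1}{\varepsilon^2}(\dz q, \dz(\cdot)\, w)
\]
with appropriate weights. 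This operator is skew-adjoint in the weighted inner product $\int \rho_s (\cdot)\,dx$, so the weighted $L^2$ energy of the anisotropically scaled unknowns $(q, \vec v, \varepsilon w)$ is controlled without singular terms.

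The energy functional is a weighted $H^s$ norm ($s \ge 3$) of $(q, \vec v, \varepsilon w)$, plus the dissipation $\|\nabla \vec v\|_{H^s}^2 + \varepsilon^2\|\nabla w\|_{H^s}^2$. Tangential derivatives ($\partial_t$ and horizontal/periodic derivatives) commute with the singular operator up to commutators with $\rho_s$. Those commutators are $\mathcal O(\varepsilon)$ and exactly cancel the $1/\varepsilon$ factor. This is where the weights and the smallness $\dz g=\mathcal O(\varepsilon)$ are essential.

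Next I recover uniform control of $w$ itself, which the energy does not give since it only controls $\varepsilon w$. Following \cite{liuRigorousJustificationHydrostatic2024}, I use the continuity equation to write $\dz(\rho w) = -\dt\rho - \dvh(\rho\vec v)$. The right-hand side involves $\varepsilon^2\dt\sigma$. I control this through the acoustic wave equation obtained by combining $\dt$ of the continuity equation with the divergence of the momentum equations. Time derivatives of $q$ are bounded uniformly because they are estimated in the energy. Integrating in $z$ with the boundary/periodicity condition then yields $\|w\|_{H^{s-1}}$ uniformly. The singular vertical source in the very fast acoustic equation (cf. \eqref{wdpt-105}) is handled by parabolic smoothing from $\Delta w$: the dissipation $\varepsilon^2\|\nabla w\|^2$ together with the equation itself gives $\varepsilon^{-2}\dz q$ in $L^2_t H^{s-1}$, a hydrostatic-type balance.

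The nonlinear terms, such as $\rho w\dz\vec v$, are then bounded by $C(E)(E+D)$ using the recovered $w$-bound, Sobolev inequalities, and $\rho$ staying bounded away from zero. The last fact holds because $\rho-\rho_s=\mathcal O(\varepsilon)$ in $L^\infty$ once $q$ is bounded. This gives $E(t)\le E(0)+C\int_0^t C(E)(E+D)$. Since the initial data need only make $E(0)$ bounded, with no smallness of the waves, a Gr\"onwall/continuity argument gives a time $T$ independent of $\varepsilon$. Uniqueness follows from a standard $L^2$ difference estimate at fixed $\varepsilon$.

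The main obstacle is the $w$-recovery step together with the $\mathcal O(\varepsilon)$ commutators in vertical derivatives. The three wave projections are not orthogonal and lose regularity, so I avoid projecting at all. Instead I estimate only $\dt$ and horizontal derivatives in the energy, and recover $\dz$ derivatives from the equations (elliptic/parabolic in $z$). That keeps every singular term either skew or cancelled by the weights.
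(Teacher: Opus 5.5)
There is a genuine gap, and it sits exactly where the theorem has content: general (ill-prepared) data. You treat $\dt$ as a tangential derivative whose norm stays bounded in the uniform energy. You also assert that time derivatives of $q$ are bounded uniformly, and that viscosity gives $\varepsilon^{-2}\dz q\in L^2_tH^{s-1}$. Neither holds under the hypotheses of Theorem~\ref{thm:uniform-est}.

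There, $\varepsilon\dt q(0)=-(\dvh\vec v_0+\dz w_0)+\mathcal O(\varepsilon)$ is only $\mathcal O(1)$, so $\dt q(0)\sim\varepsilon^{-1}$. Likewise $\dt\vec v(0)\sim\varepsilon^{-1}\nablah q_0$, and $\dt(\varepsilon w)(0)\sim\varepsilon^{-2}\dz q_0\sim\varepsilon^{-1}$. Any energy containing $\dt(q,\vec v,\varepsilon w)$ is therefore of size $\varepsilon^{-2}$ at $t=0$, and Gr\"onwall gives no $\varepsilon$-independent time.

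The vertical acoustic waves cause a second failure. They oscillate at frequency $\varepsilon^{-2}$ but viscosity damps them only at an $\mathcal O(1)$ rate (take $N=0$ in \eqref{gp-ln-002}). Hence $\varepsilon\dt w\sim\varepsilon^{-1}$, cf.\ \eqref{time-derivative-w}. By \eqref{hbsym-v-momentum}, $\varepsilon^{-2}\dz q=\frac{\varepsilon}{\rho}\Delta w-\varepsilon(\dt w+\vec v\cdot\nablah w+w\dz w)$, which is then of size $\varepsilon^{-1}$ in $L^2_t$. So the hydrostatic-type balance you invoke holds only for well-prepared data.

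For the same reason, recovering $\dz$ derivatives from the equations would need quantities that are individually unbounded, such as $\dt\vec v$, $\varepsilon^{-1}\nablah q$, or $\varepsilon\dt w$. It is also unnecessary. After the extension \eqref{SYM} the domain $\mathbb T^2\times 2\mathbb T$ is fully periodic, and $\dz$ commutes with the singular part. The only extra commutator is with the $\mathcal O(\varepsilon)$ coefficient $\dz g$.

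The paper never puts time derivatives into the energy. It commutes all spatial derivatives through the $e^{-\varepsilon g}$-weighted energy of $(q,\vec v,\varepsilon w)$. Your weight $\rho_s$ is exactly $e^{-\varepsilon g}$, and that part of your plan is correct once the linear term $w\dz g$ is included in the skew operator.

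The acoustic-wave step you allude to is carried out with explicit $\varepsilon$-weights. The paper derives the damped wave equation \eqref{eq:linear-aw} for $\xi e^{-\varepsilon g}$ and tests it with $\varepsilon^2\dt\xi$ and with $\varepsilon^2\bigl(\dt\xi-\frac{e^{\varepsilon g}}{\rho}\Delta(\xi e^{-\varepsilon g})\bigr)$. This controls only $\varepsilon\dt q$, $\varepsilon\nabla^2 q$ and $\dz q/\varepsilon$ in $H^2$. The viscous dissipation controls $\varepsilon\nabla\dt q$, $\Delta q$ and $\nabla\dz q/\varepsilon$. These are precisely the weightings that remain $\mathcal O(1)$ for general data. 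Then $w$ is recovered from \eqref{eq:w} through $\varepsilon\dt q$, not $\dt q$, and its $L^2_tH^3$ bound comes from the acoustic dissipation $\varepsilon\nabla\dt q$.

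Finally, your closing inequality $E(t)\le E(0)+C\int_0^t C(E)(E+D)$ cannot close for large data, because $D$ enters linearly with a non-small coefficient. You need the structure $\frac{d}{dt}\mathcal E+\frac12\mathcal D\le\mathcal H(\mathcal E)(1+\mathcal D^{1/2})$ of \eqref{ene-ineq-total}, so that Young's inequality absorbs the dissipation.
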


\begin{theorem}[Asymptotic limit]
    \label{informal-thm:convergence}
    Under the assumptions of Theorem~\ref{informal-thm:uniform-stability}, the slow dynamics of system \eqref{sys:hst-bsnq-CNS-intro} is characterized, as $\varepsilon \rightarrow 0$, by the limit system \eqref{sys:hst-bsnq-intro}. In particular, the solution converges in a suitable topology to $(1,\vec v_p,w_p)$, where $ (v_p, w_p) $ is a solution to system \eqref{sys:hst-bsnq-intro}.
\end{theorem}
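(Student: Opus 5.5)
The plan is a compactness argument on top of the uniform bounds of Theorem~\ref{informal-thm:uniform-stability}, combined with a separate treatment of the acoustic (fast and very fast) components, which oscillate in time and do not converge strongly.

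First, I will extract weak limits. The uniform-in-$\varepsilon$ regularity bound on $[0,T]$ gives:
\begin{itemize}
\item $\vec v^\varepsilon$ bounded in $L^\infty(0,T;H^s)\cap L^2(0,T;H^{s+1})$;
\item the density perturbation $\sigma^\varepsilon := (\rho^\varepsilon - \bar\rho_\varepsilon)/\varepsilon^2$ (relative to the weakly stratified hydrostatic profile) bounded in a weighted space;
\item $w^\varepsilon$ bounded through the regularity recovered from the acoustic wave equation and the continuity equation.
\end{itemize}
Extracting subsequences gives weak-$*$ limits $(\vec v_p, w_p)$. Since $\dz g=\mathcal O(\varepsilon)$, the profile satisfies $\bar\rho_\varepsilon = 1 + \mathcal O(\varepsilon)$, so $\rho^\varepsilon\to 1$ strongly. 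Passing to the limit in the continuity equation \eqref{hbCNS-continuity-intro}, after multiplying by the appropriate power of $\varepsilon$, yields $\dvh \vec v_p + \dz w_p = 0$. Multiplying \eqref{hbCNS-v-momentum-intro} by $\varepsilon^4$ shows that $\dz$ of the leading pressure perturbation vanishes in the limit, which gives the hydrostatic relation \eqref{eq:hstc-hb-intro}.

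Second, I will split each unknown into its slow part and its acoustic parts using the wave projections of Definition~\ref{def:wave-proj}. For the slow part $\mathcal P_0 U^\varepsilon$, the fast terms are annihilated, so $\dt \mathcal P_0 U^\varepsilon$ is uniformly bounded in $L^2(0,T;H^{-1})$. Aubin--Lions then gives strong convergence of the slow part in $C([0,T];H^{s-1}_{loc})$. The acoustic parts satisfy wave equations with $\varepsilon^{-1}$ and $\varepsilon^{-2}$ frequencies, so they converge only weakly, to zero (or to their mean).

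The nonlinear terms are the main obstacle. Products of two slow parts converge strongly. The difficulty is the slow–acoustic and acoustic–acoustic interactions in $\dvh(\rho\vec v\otimes\vec v)$ and $\dz(\rho w\vec v)$, whose weak limits need not vanish a priori. Two features make this harder than in \cite{Lions1998a,Masmoudi2001}: the projections depend on $\varepsilon$, and they are not orthogonal.

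My first attempt at this step will follow the compensated-compactness route. I will write the acoustic–acoustic product as a gradient plus a time derivative of an $\mathcal O(\varepsilon)$ quantity, using the linear wave structure $\dt \phi + \varepsilon^{-1}\dvh \vec m = \ldots$ and $\dt \vec m + \varepsilon^{-1}\nablah\phi = \ldots$, where $\phi$ and $\vec m$ denote the acoustic density and momentum components. The gradient part is absorbed into $\nablah\Pi_p$, and the time-derivative part tends to zero in distributions. The very fast vertical waves are handled the same way, with $\varepsilon^{-2}$ in place of $\varepsilon^{-1}$. Here I expect to use the uniform bound on the singular source in the vertical wave equation, so that the $\mathcal O(\varepsilon)$ non-orthogonality errors of the projections are controlled.

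For the slow–acoustic cross terms, the acoustic factor is weakly null and the slow factor is strongly compact, so these products tend to zero.

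Finally, having identified the limit as a strong solution of \eqref{sys:hst-bsnq-intro}, I will invoke the uniqueness of strong solutions of the primitive equations \cite{Cao2007}, which upgrades subsequential convergence to convergence of the whole family.
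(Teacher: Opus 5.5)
Your architecture matches the paper's. You use the same $\varepsilon$-dependent three-wave projections, Aubin--Lions for $\mathcal U_\sigma$ (whose equation $\dt\mathcal U_\sigma=\mathcal P^\varepsilon_\sigma(S)$ has no singular term), weak nullity of the acoustic parts, and weak--strong products for the slow--acoustic terms. \textbf{The step that fails is your treatment of the pressure.} The theorem allows ill-prepared data: $\overline q(0)-m_q$ and $\dvh\overline{\vec v}(0)$ are only bounded, so the horizontal acoustic wave is $\mathcal O(1)$. Then $\rho-\bar\rho_\varepsilon\simeq\varepsilon(q-c)$, so $\sigma^\varepsilon=(\rho-\bar\rho_\varepsilon)/\varepsilon^2$ is of size $\varepsilon^{-1}$ and is not uniformly bounded. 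Equivalently, $\varepsilon^{-2}\nablah\rho=\varepsilon^{-1}\rho\nablah q$ is genuinely singular. Hence $\Pi_p$ is not a limit of $\sigma^\varepsilon$, and $\dz\Pi_p=0$ cannot be read off from $\varepsilon^4\times$\eqref{hbCNS-v-momentum-intro}.

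The missing idea is the split $\varepsilon^{-1}\nablah q=\varepsilon^{-1}\nablah\overline q+\nablah(\widetilde q/\varepsilon)$. The baroclinic piece is bounded only thanks to the uniform bound on $\dz q/\varepsilon$ (Poincar\'e in $z$), and it tends weakly to $0$ as the density component of $\mathcal U_{\mrm{va},\varepsilon}$. The barotropic piece is grouped with $-\dt\vec v_\mrm{ha}-\vec v_\mrm{ha}\cdot\nablah\vec v_\mrm{ha}$ into $R_3=-\nablah\Pi_\varepsilon$, where $\Pi_\varepsilon=\dt\Phi_h+\frac12|\nablah\Phi_h|^2+\overline q/\varepsilon$. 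By \eqref{wdpt-102}, $\nablah\Pi_\varepsilon$ stays bounded even though $\dt\Phi_h$ and $\overline q/\varepsilon$ are each $\mathcal O(\varepsilon^{-1})$. The limit pressure is the weak limit of these $z$-independent potentials, so $\dz\Pi=0$ is inherited from $\dz\Pi_\varepsilon=0$.

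Beyond this, your compensated-compactness step is workable but avoidable. Working in conservative form creates $\vec v(\dvh\vec v+\dz w)=-\varepsilon\vec v\,\dt q+\mathcal O(\varepsilon)$, a genuine acoustic--acoustic product that forces the Lions--Masmoudi identity. The paper stays in the advective variables $(q,\vec v,w)$, where no such identity is needed:
\begin{itemize}
\item $\vec v_\mrm{ha}\cdot\nablah\vec v_\mrm{ha}=\frac12\nablah|\nablah\Phi_h|^2$, because $\Phi_h$ is $z$-independent;
\item $w_\mrm{ha}=0$ and $\dz\vec v_\mrm{ha}=0$;
\item every term containing $\vec v_\mrm{va}=\mathcal O(\varepsilon)$ vanishes.
\end{itemize}
What remains is weak--strong, e.g.\ $w_\mrm{va}\dz\vec v_\sigma$.

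The weak nullity of $\mathcal U_{\mrm{va},\varepsilon}$ is also not automatic, because the source in \eqref{wdpt-105} is $\mathcal O(\varepsilon^{-1})$. The paper filters $\varepsilon e^{\frac{t}{\varepsilon^2}\mathcal L_\mrm{va}^\varepsilon}\mathcal U_{\mrm{va},\varepsilon}$, whose time derivative is bounded, and integrates by parts in time. This uses $\overline{\mathcal U}_{\mrm{va},\varepsilon}=0$ and the bound on $(\mathcal L_\mrm{va}^\varepsilon)^{-1}\mathcal Q_\mrm{va}^\perp$ from Lemma~\ref{lemma:Lva-decomposition}, which is uniform only on $z$-mean-free fields.

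Finally, your uniqueness upgrade needs the slow part of the (possibly $\varepsilon$-dependent) initial data to converge. Without that, you only get subsequential convergence, which is all the paper claims.
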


We would like to comment that our theorems do not hold for isentropic flow; see Remark \ref{rm:isentropic-obstruction}, below. The case of isentropic flow is still open and left to a future work. 

The rest of the paper is organized as follows. 
In Section~\ref{sec:formal-limit}, we present the formal asymptotic limit for the reader's convenience, in the spirit of \cite{Klein2006}. In particular, the dimensionless system \eqref{sys:hst-bsnq-CNS-intro} is derived in Section~\ref{sec:nndmsnlsn}.  In Section~\ref{sec:rfm-ef-la}, we reformulate system \eqref{sys:hst-bsnq-CNS-intro} and carry out the linear analysis. In particular, the three-scales wave structure is investigated in Section~\ref{subsec:linear}. The key ingredients for the uniform estimates are introduced in Sections~\ref{subsec:ln-energy-stuct} and \ref{subsec:ln-acoustic-wave}, where we study the energy evolution and the acoustic wave evolution, respectively. Theorems~\ref{informal-thm:uniform-stability} and \ref{informal-thm:convergence} are proved in Sections~\ref{sec:uniform-est} and \ref{sec:limit}, respectively. Finally, in Section~\ref{sec:non-cnvg}, we discuss several other regimes in which the linear analysis suggests that the asymptotic limit may fail.

\section{Formal asymptotic limit}
\label{sec:formal-limit}

Let $ \rho = \rho(\vec x,t), \ p = p(\rho) \in \mathbb R^+\cup \lbrace 0 \rbrace $ be the (nonnegative) density and the pressure potential, and $ \vec u = \vec u(\vec x,t) = (\vec v, w)(\vec x,t) \in \mathbb R^2 \times \mathbb R $ be the velocity field. Here $ \vec v $ and $ w $ are the horizontal and the vertical velocities, respectively. 
Then a compressible, viscous flow, under the influence of a generalized gravity, is governed by the following compressible Navier-Stokes system:
\begin{subequations}\label{sys:CNS}
    \begin{gather}
        \label{CNS:continuity}
        \partial_t \rho + \dv (\rho \vec u) = 0,\\
        \label{CNS:momentum}
        \partial_t (\rho \vec u) + \dv (\rho \vec u \otimes \vec u) + \nabla p = - \rho \nabla g(z) + \dv \mathbb S. 
    \end{gather}
Here 
$ g = g(z)$ is the given generalized gravity potential, and $ \mathbb S $ is the viscosity tensor given by
\begin{equation}
    \label{def:viscosity-tensor}
    \mathbb S:= \left(\begin{array}{cc}
    \mu_{hh} (\nablah \vec v + \nablah \vec v^\top) + \lambda \dv \vec u \mathbb I_2 & \mu_{hz} (\nablah w + \partial_z \vec v) \\
    \mu_{zh} (\nablah^\top w + \partial_z \vec v^\top) & 2 \mu_{zz} \partial_{z} w  + \lambda \dv \vec u
    \end{array}\right),
\end{equation}
where $ \mu_{hh}, \mu_{zh} = \mu_{hz}, \mu_{zz}$, and $ \lambda $ are nonnegative constants. 
\end{subequations}

We consider the system in a domain $\mathcal D:=(L\mathbb T)^2 \times [0,H]$ with periodic boundary conditions in the horizontal variables and 
\begin{equation}
    \label{bc} 
    (w,\partial_z \vec v)|_{z=0,H}=0. 
\end{equation} 
Here $ L $ and $ H $ are the typical length scales in the horizontal and vertical directions, respectively. Notably, for $ g(z) = g z $, system \eqref{sys:CNS} is reduced to the standard compressible Navier-Stokes system with homogeneous gravity.

\subsection{Nondimensionalization}\label{sec:nndmsnlsn}

Following \cite{Klein2006}, 
we introduce the following scaling of system \eqref{sys:CNS}  
\begin{equation}
    \label{def:scaling}
    \begin{aligned}
        \vec x_\mrm{h} =& L \vec x_\mrm h', & z =& H z', & t =& T t', & g =& GH g', \\
        \rho =& D \rho', & \vec v =& V \vec v', & w =& W w', & p =& P p',
    \end{aligned}
\end{equation}
where $ L, H, T, G, D, V, W, P >0 $ are the typical values of the corresponding variables. 
We further assume and denote that
\begin{equation}
    \label{asmp:scaling}
    \begin{aligned}
    \sigma := & \frac{H}{L} = \frac{W}{V}, & L = & VT, & H = & WT, \\
    \mathrm{Ma} :=& \frac{V}{\sqrt{P/D}}, & \mrm{Fr} :=& \sqrt{\frac{WV}{GH}} = \frac{V}{\sqrt{GL}}, & \mrm{Re}_d := & \frac{DVL}{\lambda}, \\
    \mrm{Re}_{hh} := & \frac{D VL}{\mu_{hh}}, & \mrm{Re}_{hz} := & \frac{D VL}{\mu_{hz}}, & \mrm{Re}_{zz} := & \frac{D VL}{\mu_{zz}} .
    \end{aligned}
\end{equation}
Then after substituting \eqref{def:scaling} and \eqref{asmp:scaling} into \eqref{sys:CNS}, and writing the resultant equations without the primes, one obtains the following nondimensionalized system:
\begin{subequations}
    \label{sys:CNS-scaled}
    \begin{gather}
        \label{sCNS:continuity}
        \partial_t \rho + \dvh (\rho \vec v) + \dz (\rho w) = 0, \\
        \label{sCNS:h-momentum}
        \begin{gathered}
        \partial_t (\rho \vec v) + \dvh(\rho \vec v \otimes \vec v) + \dz (\rho w \vec v) + \frac{1}{\mrm{Ma}^2} \nablah p(\rho)\\
        = \frac{1}{\mrm{Re}_{hh}} [\dvh(\nablah \vec v + \nablah \vec v^\top) ] + \frac{1}{\mrm{Re}_d}\nablah (\dvh \vec v + \dz w)\\
        + \frac{1}{\sigma^2 \mrm{Re}_{hz}} \dz[ \sigma^2 \nablah w + \dz \vec v ],
        \end{gathered}\\
        \label{sCNS:v-momentum}
        \begin{gathered}
            \partial_t (\rho w) + \dvh ( \rho w \vec v) + \dz (\rho w w) + \frac{1}{\sigma^2 \mrm{Ma}^2} \dz p(\rho) = - \frac{1}{\sigma \mrm{Fr}^2}\rho \partial_z g(z) \\
            + \frac{1}{\sigma^2 \mrm{Re}_{hz}} \dvh[ \sigma^2 \nablah w + \dz \vec v ] + \frac{1}{\sigma^2 \mrm{Re}_{zz}} 2 \partial_{zz} w \\ 
            + \frac{1}{\sigma^2 \mrm{Re}_{d}} \dz ( \dvh \vec v + \dz w).
        \end{gathered}
    \end{gather}
\end{subequations}
Notice that we have defined the Mach and Froude numbers in \eqref{asmp:scaling} using the horizontal (speed and length) scales. One can easily using the vertical scales to define them and write down the resulting system of equations. However, for our application, it is easier to measure the horizontal scales of large-scale atmospheric and oceanic flows. We therefore adopt the definitions in \eqref{asmp:scaling} throughout this work.

\subsection{Typical values and isothermal flow}
\label{subsec:typical_values}
From \cite{USATMO},  \cite{OceanDepth}, and \cite{GreatLakes}, one can find the typical values of the relevant physical quantities in Table \ref{tb:typical_values}.
\begin{table}[h] 
\begin{center}
\begin{tabular}{|c|c|c|}
    \hline
    Property & Notation & Typical value \\
    \hline\hline
    Sound speed$^{2}$ & $ P/D $ & \makecell{$ 10^5 m^2 s^{-2} $ (Troposphere),\\ $ 10^6 m^2 s^{-2} $ (Water)}  \\
    \hline
    Density & $ D $ & \makecell{$10^3 g m^{-3} $ (Troposphere), \\ $ 10^6 g m^{-3} $ (Water)} \\
    \hline
    \makecell{Width \\ (horizontal scale)} & $ L $ & \makecell{$ 10^6 \sim 10^7 m $ (Troposphere and ocean), \\ $ 10^5 \sim 10^6 m $ (Great lakes) } \\
    \hline
    \makecell{Height/Depth \\ (vertical scale)} & $ H $ & \makecell{ $ 10^4 m $ (Troposphere), \\ $ 10^3 m $ (Ocean), $ 10^2 m $ (Great lakes)} \\
    \hline 
    Horizontal velocity & $V = \frac{L}{T}$  & \makecell{ $ 1 \sim 10 ms^{-1} $ (Troposphere), \\
    $1 ms^{-1}$ (Ocean, Great lakes) }
    \\
    \hline
    Gravity constant & $ G $ & $ 10 m s^{-2} $ \\
    \hline
\end{tabular}
\caption{Typical values}
\label{tb:typical_values}
\end{center}
\end{table}
Together with \eqref{asmp:scaling}, one can check that 
\begin{equation}
\label{non-dimensional-number}
    \begin{aligned}
    \sigma  \simeq & \ 10^{-3} \sim 10^{-2} \ (\text{Troposphere}), 
    \ 10^{-4} \sim 10^{-3} \ (\text{Ocean, Great Lakes}), \\
    \mrm{Ma}^2 \simeq & \ 10^{-5} \sim 10^{-3} \ (\text{Troposphere}), \ 10^{-6} \ (\text{Ocean, Great Lakes}) , \\
    \mrm{Fr}^2 \simeq & \ 10^{-8} \sim 10^{-5} \ (\text{Troposphere}), \ 10^{-8} \sim 10^{-7}\ (\text{Ocean}),
    \ 10^{-7} \sim 10^{-6} \ (\text{Great Lakes}), \\
     \mrm{Ma}^2/\mrm{Fr}^2 \simeq & \ 10^2 \sim 10^3  \ (\text{Troposphere}), \ 10 \sim 10^2 \ (\text{Ocean}), 
     \ 1 \sim 10 \ (\text{Great Lakes}).
\end{aligned}
\end{equation}
In particular, one has that 
\begin{equation}
    \label{non-dimensional-number-2}
    \begin{aligned}
    \frac{\sigma^2 \mrm{Ma}^2}{\sigma \mrm{Fr}^2} \simeq & \ 1  \ (\text{Troposphere}), \ 10^{-2} \ (\text{Ocean}), \ 10^{-3} \ ( \text{Great Lakes} ).
    \end{aligned}
\end{equation}
From \eqref{sCNS:v-momentum}, one can see that
\begin{enumerate}[label = (\roman{*}), ref = (\roman{*})]
    \item when $ \sigma^2 \mrm{Ma}^2 / \sigma \mrm{Fr}^2 \simeq 1 $, the gravity and the pressure are nearly balanced, leading to {\bf strong stratification} of the density; we refer to this regime as {\bf hydrostatic-anelastic scale}, which is relevant, for example, to tropospheric flows;
    \item when $ \sigma^2 \mrm{Ma}^2 / \sigma \mrm{Fr}^2 \ll 1 $, the gravity is weaker than the pressure and therefore the leading order of the density profile will be constant along the vertical direction, causing only {\bf weak stratification}; we refer this regime as the {\bf hydrostatic-Boussinesq scale}, which is relevant, for example, to flows in the ocean and Great Lakes.
\end{enumerate}
These terminologies are inspired by the classical Boussinesq and anelastic approximations \cite{feireislSingularLimitsThermodynamics2017}.

In this work, we focus on the \textbf{hydrostatic-Boussinesq scale}. The \textbf{hydrostatic-anelastic scale} will be addressed in a forthcoming work. In the next section, we formally derive the \textbf{hydrostatic-Boussinesq approximation} of the compressible Navier--Stokes equations \eqref{sys:CNS-scaled}.

Moreover, we consider only the isothermal case, and for the sake of presentation, we take
\begin{equation}\label{asmp:isothermal}
    p(\vec x, t) = \rho(\vec x, t) \qquad \text{for all } t\geq 0.
\end{equation}
Such a flow is referred to as the isothermal flow \cite{lionsMathematicalTopicsFluid1996,Lions1998}.

\subsection{The hydrostatic-Boussinesq approximation for isothermal flow}

In this section, we consider the regime $\mrm{Ma}/\mrm{Fr} \simeq 1$ in \eqref{non-dimensional-number}, namely, the Mach and Froude numbers are of the same order. We also assume that the aspect ratio $\sigma$ is comparable to both $\mrm{Ma}$ and $\mrm{Fr}$. In particular, this scaling is relevant to the Great Lakes and corresponds to the hydrostatic-Boussinesq scale.

\smallskip

To be more precise, let $ \varepsilon\in (0,1) $ denote a small (scale) number and consider
\begin{equation}
    \label{hst-bsnq-001}
    \sigma = \mrm{Ma} = \mrm{Fr} = \varepsilon,
\end{equation} 
and 
\begin{equation}
    \label{hst-bsnq-002}
    \mrm{Re}_{hh} =1, \qquad \mrm{Re}_d = \mrm{Re}_{hz} = \mrm{Re}_{zz} = \frac{1}{\sigma^2} = \varepsilon^{-2}.
\end{equation}
\begin{remark}
    We remark that the choice of \eqref{hst-bsnq-001} is for the sake of clear presentation, below. The more general case can be handled similarly. The choice of \eqref{hst-bsnq-002} is to guarantees the turbulence viscosity effect remains of order $ 1 $, although it is not determined by the typical values in Table \ref{tb:typical_values}. The case with singular or degenerate viscosity is left for future work. 
\end{remark}

With \eqref{hst-bsnq-001}, \eqref{hst-bsnq-002}, and the isothermal assumption \eqref{asmp:isothermal}, system \eqref{sys:CNS-scaled} can be written as 
\begin{subequations}
    \label{sys:hst-bsnq-CNS}
    \begin{gather}
        \label{hbCNS-continuity}
        \partial_t \rho + \dvh (\rho \vec v) + \partial_z (\rho w) = 0, \\
        \label{hbCNS-h-momentum}
        \begin{gathered}
        \dt (\rho \vec v) + \dvh (\rho \vec v\otimes \vec v) + \dz (\rho w \vec v) + \frac{1}{\varepsilon^2} \nablah \rho \\
        = \dvh (\nablah \vec v + \nablah \vec v^\top) +  \varepsilon^2 \nablah (\dvh \vec v + \partial_z w) + \partial_z \lbrack \varepsilon^2 \nablah w + \partial_z \vec v\rbrack,
        \end{gathered} \\
        \label{hbCNS-v-momentum}
        \begin{gathered}
        \dt(\rho w) + \dvh(\rho w \vec v) + \dz(\rho w w ) + \frac{1}{\varepsilon^4} \dz \rho = - \frac{1}{\varepsilon^3} \rho \dz g(z) \\
        + \dvh \lbrack \varepsilon^2 \nablah w + \dz \vec v\rbrack + 2 \partial_{zz} w + \dz (\dvh \vec v + \dz w).
    \end{gathered}
    \end{gather}
\end{subequations}
Moreover, thanks to the symmetry of the equations, we assume further that
\begin{equation*}\tag{SYM}
    \label{SYM}
    \begin{aligned}
    \vec v, \rho, g(z) \quad & \text{are even with respect to the $ z $-variable}, \\
    w \qquad & \text{is odd with respect to the $ z $-variable}.
\end{aligned}
\end{equation*}
We consider system \eqref{sys:hst-bsnq-CNS} on the periodic domain $ \Omega := \mathbb T^2 \times 2 \mathbb T $. Thanks to the symmetry condition \eqref{SYM}, we immediately obtain
\begin{equation}
    w,\ \partial_z \vec v = 0 \qquad \text{on} \ \mathbb T^2 \times \lbrace z = k, k \in \mathbb Z \rbrace.
\end{equation}
In particular, these boundary conditions are consistent with \eqref{bc}.

\bigskip 

Next, we derive the leading order system as $ \varepsilon \rightarrow 0 $ in system \eqref{sys:hst-bsnq-CNS}, with only slow waves. To this end, we write 
\begin{equation}
    \label{scl-xpnsn:ansatz-1}
    \psi(\vec x,t) = \sum_{j = 0}^{\infty} \varepsilon^{j} \psi^{(j)}(\vec x, t)\qquad \text{for} \ \psi \in \lbrace \rho, \vec v, w\rbrace. 
\end{equation}
Then expanding system \eqref{sys:hst-bsnq-CNS} leads to the following:
\begin{align}
    \label{epsilon--4}
    \mathcal O(\varepsilon^{-4}): && \partial_z \rho^{(0)} & = 0,\\
    \label{epsilon--3}
    \mathcal O(\varepsilon^{-3}): && \dz \rho^{(1)} + \rho^{(0)}\dz g(z) & = 0, \\
    \label{epsilon--2}
    \mathcal O(\varepsilon^{-2}): && \nablah \rho^{(0)} & = 0, \\
    \label{epsilon--2-2} && \dz\rho^{(2)} + \rho^{(1)} \dz g(z) & = 0, \\
    \label{epsilon--1}
    \mathcal O(\varepsilon^{-1}): && \nablah \rho^{(1)} & = 0, 
\end{align}
and the equations of $ \mathcal O(1) $, 
\begin{gather}
    \label{epsilon-0-rho}
    \dt \rho^{(0)} + \dvh (\rho^{(0)}\vec v^{(0)}) + \dz (\rho^{(0)}w^{(0)}) = 0, \\
    \label{epsilon-0-v}
\begin{gathered}
    \dt (\rho^{(0)}\vec v^{(0)}) + \dvh (\rho^{(0)}\vec v^{(0)}\otimes \vec v^{(0)}) + \dz (\rho^{(0)} w^{(0)} \vec v^{(0)})\\
    + \nablah  \rho^{(2)} 
    = \dvh (\nablah \vec v^{(0)} + \nablah {\vec v^{(0)\top}}) + \partial_{zz} \vec v^{(0)},
\end{gathered}
\end{gather}
where we only list the ones that lead to a closed system. 

Now we are ready to derive, formally, the leading order system. In fact, \eqref{epsilon--4}, \eqref{epsilon--2}, and the conservation of mass \eqref{epsilon-0-rho} imply that, without loss of generality, 
\begin{equation*}
    \rho^{(0)} \equiv 1 \qquad \text{and} \qquad \dvh \vec v^{(0)} + \dz w^{(0)} = 0.
\end{equation*}
Meanwhile, from \eqref{epsilon--3} and \eqref{epsilon--1}, one can calculate that 
\begin{equation*}
    \rho^{(1)}(z) = c(t) - g(z)
\end{equation*}
for some function $c(t)$. Integrating the continuity equation \eqref{hbCNS-continuity} at order $\mathcal O(\varepsilon)$ over the space, we get that $\int \rho^{(1)} d\vec x$ is a constant in time. Therefore, $c(t)=c$ is a constant independent of time.
Consequently, \eqref{epsilon--2-2} yields
\begin{equation*}
    \partial_z \pi_p := \partial_z \rho^{(2)} = \partial_z \lbrack \frac{1}{2} g^2(z) - c g(z)\rbrack. 
\end{equation*}
Without loss of generality, we will take $ c = 1/2 $.

To sum up, the leading order system of \eqref{sys:hst-bsnq-CNS}, i.e., the hydrostatic-Boussinesq approximation for isothermal flow is given by 
\begin{subequations}
    \label{sys:hst-bsnq}
    \begin{gather}
        \label{eq:continuity-hb}
        \dvh \vec v_p + \dz w_p = 0, \\
        \label{eq:h-momentum-hb}
        \partial_t \vec v_p + \vec v_p \cdot \nablah \vec v_p + w_p \dz \vec v_p + \nablah \pi_p = \dvh (\nablah \vec v_p + \nablah \vec v_p^\top) + \partial_{zz} \vec v_p,\\
        \label{eq:hstc-hb}
        \dz \pi_p =  \frac{1}{2} \dz \lbrack g^2(z) - g(z) \rbrack.
    \end{gather}
\end{subequations}
Here we have used $ (\vec v_p, w_p, \pi_p) $ to represent the leading order horizontal velocity, vertical velocity, and the effective hydrostatic pressure. 

Writing $ \Pi_p:= \pi_p - \frac{1}{2}(g^2(z) - g(z)) $, system \eqref{sys:hst-bsnq} is equivalent to the following system: 
\begin{subequations}
    \label{sys:hst-bsnq-1}
    \begin{gather}
        \label{eq:continuity-hb-1}
        \dvh \vec v_p + \dz w_p = 0, \\
        \label{eq:h-momentum-hb-1}
        \partial_t \vec v_p + \vec v_p \cdot \nablah \vec v_p + w_p \dz \vec v_p + \nablah \Pi_p = \dvh (\nablah \vec v_p + \nablah \vec v_p^\top) + \partial_{zz} \vec v_p,\\
        \label{eq:hstc-hb-1}
        \dz \Pi_p = 0.
    \end{gather}
\end{subequations}

\bigskip

The local well-posedness of both system \eqref{sys:hst-bsnq-CNS} and \eqref{sys:hst-bsnq} follows easily with Galerkin's scheme, which may depend on the initial data. In fact, following \cite{Cao2007}, one has global well-posedness of system \eqref{sys:hst-bsnq}.
The goal of this work is to establish the uniform-in-$\varepsilon $  existence theory for system \eqref{sys:hst-bsnq-CNS} and then to rigorously justify the formal approximation.

\section{Reformulation, linear wave analysis, and linear energy structure}
\label{sec:rfm-ef-la}

\subsection{Eddy viscosity, reformulation, and main theorems}
\label{subsec:reformulation}

For the analysis below, we replace the anisotropic viscous operators in systems \eqref{sys:hst-bsnq-CNS} and \eqref{sys:hst-bsnq-1} by the isotropic eddy viscosity operators $\Delta \vec v$ and $\Delta w$. This leads to system \eqref{sys:hst-bsnq-CNS-intro} and corresponding limit system \eqref{sys:hst-bsnq-intro} in section \ref{sec:introduction}. The obstruction to treating the original anisotropic viscosity is explained in Remark~\ref{rm:eddy-viscosity}.

Next, we reformulate system \eqref{sys:hst-bsnq-CNS-intro} into a symmetric system for any $ \varepsilon \in (0,1) $. Let 
\begin{equation}
    q:=g(z)+\frac1\varepsilon\log \rho, \qquad \text{i.e.} \quad \rho=e^{\varepsilon(q-g(z))}.
    \label{def:symmetric_variable}
\end{equation}
By virtue of \eqref{SYM}, we have that $q$ is even with respect to the $ z $-variable.
From system \eqref{sys:hst-bsnq-CNS-intro}, one can write down the following {symmetric system} of $ (q,\vec v, w) $:
\begin{subequations}
    \label{sys:hst-bsnq-sym}
    \begin{gather}
\label{hbsym-continuity}
\dt q + \vec v \cdot \nablah q +  w \dz q  + \frac{1}{\varepsilon} (\dvh \vec v + \dz w)  =   w \dz g,  \\
\label{hbsym-h-momentum}
\begin{gathered}
\dt \vec v + \vec v \cdot \nablah \vec v + w \dz \vec v + \frac{1}{\varepsilon} \nablah q 
= \frac{1}{\rho} \Delta \vec v,
\end{gathered}\\
\label{hbsym-v-momentum}
\begin{gathered}
    \varepsilon^2  ( \dt w + \vec v \cdot \nablah w + w \dz w) + \frac{1}{\varepsilon} \dz q 
    = \frac{\varepsilon^2}{\rho} \Delta w.
\end{gathered}
    \end{gather}
\end{subequations}
Notice that, system \eqref{sys:hst-bsnq-sym} is degenerate as $ \varepsilon \rightarrow 0 $. To be more specific, one can only obtain the energy estimate, uniformly-in-$\varepsilon $, for the variables $ (q,\vec v, \varepsilon w ) $. Thus, one will lose the regularity of $ w $ at the limit. To resolve this problem, following \cite{liuRigorousJustificationHydrostatic2024}, we write from \eqref{hbCNS-continuity-intro} that
\begin{equation}
    \label{eq:w}
    \begin{gathered}
    w(x,y,z,t) = - \frac{1}{\rho(x,y,z,t)}\int_0^z\bigl\lbrack\dt \rho(x,y,z',t) + \dvh (\rho  \vec v (x,y,z',t)\bigr\rbrack \,dz'\\
     = - \frac{1}{\rho(x,y,z,t)}\int_0^z \rho(x,y,z',t) \bigl\lbrack \varepsilon \dt q(x,y,z',t) + \varepsilon \vec v \cdot \nablah q(x,y,z',t) +  \dvh \vec v(x,y,z',t) \bigr\rbrack \,dz'.
    \end{gathered}
\end{equation}
\begin{remark}[Symmetry with respect to the singular operator]
    \label{rm:sym-operator}
    The singular operator in system \eqref{sys:hst-bsnq-sym}, i.e., the terms involving $ \frac{1}{\varepsilon} $, is symmetric. This makes it possible to analyze the wave decomposition. See section \ref{subsec:linear}, below. 
\end{remark}
\begin{remark}[Loss of regularity]
    \label{rm:loss-regularity}
The identity \eqref{eq:w} implies that, to recover the regularity of $ w $, uniformly-in-$ \varepsilon $, it costs one tangential derivative 
of $ q $ and $ \vec v $, i.e., $  \varepsilon \nablah q, \nablah \vec v $, and the temporal derivative of $ q $, i.e., $ \varepsilon \dt q $. This is a reflection of the limit hydrostatic equations \eqref{sys:hst-bsnq-intro}, where $ w_p $ is recovered in a similar fashion as in \cite{Cao2007}, etc., and has been used in the study of the singular limit problems \cite{feireislSingularLimitsThermodynamics2017,liuZeroMachNumber2023,liuRigorousJustificationHydrostatic2024}, etc.. While the regularity of $ \nablah \vec v $ can be closed from the viscosity in \eqref{hbsym-h-momentum}, the regularity of $ \varepsilon \dt q, \varepsilon \nablah q $ cannot be closed by the energy estimate using \eqref{sys:hst-bsnq-sym} due to the fact that \eqref{hbsym-continuity} is a first-order equation. We will use weighted estimate via integrating factor to overcome this difficulty. See Remark \ref{rm:weights}, below. 
Moreover, we will use system \eqref{sys:hst-bsnq-sym} to derive the singular waves decomposition and eventually the asymptotic limit $ \varepsilon \rightarrow 0 $. 
\end{remark}

\begin{remark}[Acoustic wave]
    \label{rm:acoustic-wave}
    The standard energy estimate alone is not enough to close the uniform estimate, since it is degenerate in $ w $. The latter is important to control the vertical transport $ w \dz $. To recover the regularity of $ w $, as mentioned in Remark \ref{rm:loss-regularity}, one will need to estimate $ \varepsilon \dt q $, etc.. This will be done through tracking the evolution of the acoustic wave. See sections \ref{subsec:ln-acoustic-wave} and \ref{subsec:ene-est}, below.  
\end{remark}

\begin{remark}[Eddy viscosity]\label{rm:eddy-viscosity}
The key difference between the viscosity in systems \eqref{sys:hst-bsnq-CNS-intro} and \eqref{sys:hst-bsnq-CNS} is that system \eqref{sys:hst-bsnq-CNS} is degenerate in the horizontal viscosity for the vertical momentum equation, i.e., $ \dvh(\varepsilon^2 \nablah w) $ in \eqref{hbCNS-v-momentum}. We refer to the viscosity in system \eqref{sys:hst-bsnq-CNS-intro} as the eddy viscosity. For technical reasons, we do not treat the viscosity in system \eqref{sys:hst-bsnq-CNS}. This is due to the degeneracy in the acoustic wave equations, resulting from the degenerate horizontal viscosity for the vertical momentum, cf. Section \ref{subsec:ln-acoustic-wave}.
\end{remark}

Next we state our main results.

\begin{theorem}[Uniform stability]
\label{thm:uniform-est}
Consider initial data $ (q(0), \vec v(0),w(0))=(q_0, \vec v_0, w_0) $ with $ (\vec v_0, w_0) \in H^3(\mathbb T^2 \times 2 \mathbb T) $ and $q_0\in H^4(\mathbb T^2 \times 2 \mathbb T)$, and $ g = g(z) \in C^\infty (2 \mathbb T) $, satisfying the symmetry \eqref{SYM} and $ \dz g, \partial_{zz} g, \partial_{zzz} g = \mathcal O(\varepsilon) $. In addition, we assume that 
\begin{equation}
    \label{thmineq:initial_data}
    \mathcal E_0 := \norm{q(0), \vec v(0), \varepsilon w(0)}{H^3}^2 + \norm{\varepsilon \dt q(0), \varepsilon \nabla^2 q(0), \frac{\dz q(0)}{\varepsilon} }{H^2}^2 \leq M < \infty 
\end{equation}
for some positive constant $M$ that is uniformly bounded in $\varepsilon$,
where $ \varepsilon \dt q(0) $ is given by \eqref{hbsym-continuity}, i.e.,
\begin{equation}
    \varepsilon \dt q(0) =  \varepsilon\lbrack w(0) \dz g - \vec v(0)\cdot \nablah q(0) - w(0) \dz q(0) \rbrack - \dvh \vec v(0) - \dz w(0).
\end{equation}
Then there exist $ \varepsilon_0 \in (0,1) $ small enough and $ C, T \in (0,\infty) $, independent of $ \varepsilon $ but may depend on $ \mathcal E_0 $, such that for all $ \varepsilon \in (0,\varepsilon_0) $, 
\begin{equation}
    \label{thmineq:uniform-estimate}
    \begin{gathered}
        \sup_{0\leq t \leq T} \biggl\lbrace  \norm{q(t), \vec v(t), \varepsilon w(t)}{H^3}^2 + \norm{\varepsilon \dt q(t),\varepsilon\nabla^2 q(t), \frac{\dz q(t)}{\varepsilon}, w(t), \dz w(t)}{H^2}^2 \biggr\rbrace \\
        + \int_0^T \biggl( \norm{\nabla \vec v(t), \varepsilon \nabla w(t), w(t), \dz w(t)}{H^3}^2 + \norm{\Delta q(t), \frac{\nabla \dz q(t)}{\varepsilon}, \varepsilon \nabla \dt q(t), \varepsilon^2 \dt w(t), \varepsilon \dt \vec v(t)}{H^2}^2 \biggr) \,dt \\
        < C \mathcal E_0 < \infty. 
    \end{gathered}
\end{equation}
\end{theorem}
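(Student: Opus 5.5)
The plan is a continuity (bootstrap) argument built on two families of \emph{a priori} estimates for the symmetric system \eqref{sys:hst-bsnq-sym}; local existence for each fixed $\varepsilon$ is already available from the classical theory \cite{Hoff2012,Itaya1971}. Define $\mathcal E(t)$ as the quantity inside the supremum in \eqref{thmineq:uniform-estimate} and $\mathcal D(t)$ as the integrand of the dissipation. The aim is an inequality of the form
\begin{equation*}
\mathcal E(t) + \int_0^t \mathcal D \,ds \le C\mathcal E_0 + C\int_0^t \bigl(1+\mathcal E(s)\bigr)^{k}\,ds + C\varepsilon\, \mathcal P\Bigl(\sup_{[0,t]}\mathcal E\Bigr),
\end{equation*}
with $C$ independent of $\varepsilon$. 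A Gronwall argument then gives a time $T$ and the bound $C\mathcal E_0$ for all $\varepsilon<\varepsilon_0$. Throughout, $\rho=e^{\varepsilon(q-g)}$ stays close to $1$ because $\varepsilon q$ is small in $L^\infty$.

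The first step is the energy estimate for $(q,\vec v,\varepsilon w)$ in $H^3$. I apply $\partial^\alpha$ with $|\alpha|\le3$ to \eqref{sys:hst-bsnq-sym} and test against $(\partial^\alpha q,\partial^\alpha\vec v,\partial^\alpha w)$, inserting a weight $\rho$ (or a suitable function of $q$) so that the viscous terms $\rho^{-1}\Delta$ become coercive. The $1/\varepsilon$ singular terms cancel by the symmetry noted in Remark~\ref{rm:sym-operator}. The gravity source $w\dz g$ is only $O(1)\,|w|$, since $\dz g=\mathcal O(\varepsilon)$, so it is harmless provided $w$ itself is controlled. Commutators with the weight produce terms like $\dt\rho/\rho=\varepsilon\dt q-\dots$, which are bounded by $\mathcal E$. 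The vertical transport $w\dz$ needs $w\in H^2$ and $\dz w\in H^2$, which the energy estimate does not give; these are supplied by the next step.

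The second step is the acoustic wave estimates for $\varepsilon\dt q$, $\varepsilon\nabla^2q$, and $\dz q/\varepsilon$ in $H^2$. I differentiate \eqref{hbsym-continuity} in time and combine it with the divergence of the momentum equations. This gives a damped wave-type system for $\varepsilon\dt q$ in which the viscosity produces a parabolic smoothing term, and that term yields the dissipation $\|\Delta q,\nabla\dz q/\varepsilon,\varepsilon\nabla\dt q\|_{H^2}$. The control of $\dz q/\varepsilon$ comes from \eqref{hbsym-v-momentum}, read as $\dz q/\varepsilon = \varepsilon^2(\text{viscous}-\text{material derivative of }w)$. Since \eqref{hbsym-continuity} is first order, the losses noted in Remark~\ref{rm:loss-regularity} have to be absorbed with an integrating-factor weight. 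From here I recover $w$ and $\dz w$ in $H^2$ (and in $L^2_tH^3$) through the formula \eqref{eq:w}, which expresses them via $\varepsilon\dt q$, $\varepsilon\nablah q$, and $\nablah\vec v$.

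I expect the main obstacle to be closing this second step. The gravity term $w\dz g$, once multiplied by $1/\varepsilon$ factors in the wave equation, produces contributions of order one that are not small, so a smallness argument is unavailable. My first attempt would be a weighted energy functional with weight $e^{\pm\varepsilon g}$-type factors, matching the stratification $\rho^{(1)}=c-g$, to cancel the leading gravity term. Any remaining terms I would then bound by $\varepsilon\mathcal P(\mathcal E)$, using $\partial_{zz}g,\partial_{zzz}g=\mathcal O(\varepsilon)$.

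Finally, I combine the two estimates, using the recovered bounds on $w$ and $\dz w$ in the trilinear terms, and close the bootstrap. Uniqueness follows from a standard $L^2$ difference estimate at fixed $\varepsilon$.
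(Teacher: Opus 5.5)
Your architecture matches the paper's: a weighted $H^3$ energy estimate for $(q,\vec v,\varepsilon w)$, an acoustic estimate obtained by differentiating \eqref{hbsym-continuity} in time and substituting the divergence of the momentum equations, recovery of $w,\dz w$ from \eqref{eq:w}, and a short-time Gronwall closure.

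The gap is in your second step, where you control $\dz q/\varepsilon$ by reading \eqref{hbsym-v-momentum} as $\dz q/\varepsilon=\varepsilon^2(\text{viscous}-\text{material derivative of }w)$. That requires $\varepsilon^2\dt w\in H^2$, and nothing in your scheme controls it. The energy estimate only sees $\varepsilon w$. By \eqref{time-derivative-w}, $\varepsilon^2\dt w$ is a genuinely $\mathcal O(1)$ quantity oscillating at the vertical acoustic frequency $\varepsilon^{-2}$. So the step is circular.

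The paper runs the implication the other way. First, $\norm{\dz q/\varepsilon}{H^2}$ is part of the acoustic energy $\sum_{k\le 2}\mathcal E_1(\nabla^k q)$; it is produced by the term $\varepsilon^{-4}\dz(e^{-\varepsilon g}\dz\xi)$ of the wave equation tested against $\varepsilon^2\dt\xi$. Only afterwards are \eqref{ellest:002}--\eqref{ellest:003} used to obtain $\norm{\varepsilon\dt\vec v,\varepsilon^2\dt w}{H^2}\lesssim\mathcal H(\mathfrak E)(1+\mathfrak D^{1/2})$ in Proposition \ref{prop:elliptic-est}. This is also how the $\varepsilon^2\dt w,\varepsilon\dt\vec v$ part of \eqref{thmineq:uniform-estimate} is obtained, which your plan does not address.

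The reversal matters because these bounds are exactly what closes the acoustic estimate. Its source contains time derivatives of the commutators. Examples are $\varepsilon\,\partial^2\dt w\,\dz q=(\varepsilon^2\partial^2\dt w)(\dz q/\varepsilon)$ and $\varepsilon\,\partial^2\dt\vec v\cdot\nablah q$ in \eqref{ac-ene-est:000}. These terms, not gravity, are the real difficulty, and this is also where the isothermal structure enters (Remark \ref{rm:isentropic-obstruction}).

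The gravity term you single out is harmless once $\dz g=\mathcal O(\varepsilon)$. Its contribution is bounded by $\norm{\varepsilon^2\dt w}{L^2}\norm{\varepsilon\dt q}{L^2}$, so it reduces to the same quantity and needs no smallness in a short-time argument. The weight removes it exactly in any case, since $e^{-\varepsilon g}(\frac1\varepsilon\dz W-W\dz g)=\frac1\varepsilon\dz(We^{-\varepsilon g})$, cf.\ \eqref{apndx-eq:001}.

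Finally, testing only against $\varepsilon^2\dt\xi$ yields $\varepsilon\dt\xi,\nablah\xi,\dz\xi/\varepsilon$ in the energy and $\varepsilon\nabla\dt\xi$ in the dissipation. To put $\varepsilon\nabla^2 q$ into the energy and $\Delta q,\nabla\dz q/\varepsilon$ into the dissipation, you need the second multiplier $\varepsilon^2\bigl(\dt\xi-\frac{e^{\varepsilon g}}{\rho}\Delta(\xi e^{-\varepsilon g})\bigr)$ of \eqref{ac-ene-001}. With the dependence between $\dz q/\varepsilon$ and $\varepsilon^2\dt w$ reversed and this multiplier added, your plan becomes the paper's proof.
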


\begin{remark}
\label{rm:initial_data}
    We emphasize that the requirement of $ \dz q(0) = \mathcal O(\varepsilon) $ for the initial data \eqref{thmineq:initial_data} is not a restriction on the initial data. Indeed, the linear analysis, in particular \eqref{sol:k-th-mode-vertical-acoustic} in section \ref{subsec:linear}, below, indicates that $ \dz q $ corresponds to the `density' component of the vertical acoustic wave, which has to be $ \mathcal O(\varepsilon) $ in order to have bounded vertical acoustic wave. Otherwise, the initial data will be singular as $ \varepsilon \rightarrow 0 $. Therefore, the condition \eqref{thmineq:initial_data} should not be seen as a restriction on the size of the initial data. 
\end{remark}

\begin{theorem}[Asymptotic limit with general data]
\label{thm:limit}
    Under the same assumption as in Theorem \ref{thm:uniform-est}, there exists $ (\mathsf v_\sigma, \mathsf w_\sigma ) \in C([0,T];H^2(\mathbb T^2 \times 2 \mathbb T))\cap L^2(0,T;H^3(\mathbb T^2 \times 2 \mathbb T)) $ such that, upon selecting a subsequence as $ \varepsilon\rightarrow 0 $,
    \begin{equation}
        \label{thmcvg:weak_cnvgt}
        \begin{aligned}
        \rho = e^{\varepsilon(q-g(z))} & \rightarrow 1 \qquad & \text{in} & \ L^\infty(0,T;H^3(\mathbb T^2 \times 2 \mathbb T)), \\
        \vec v, \ w  & \rightharpoonup \mathsf v_\sigma,\ \mathsf w_\sigma \qquad & \text{weakly in} & \ L^2(0,T;H^3(\mathbb T^2 \times 2 \mathbb T)), \\
        \vec v, \ w  & \overset{*}{\rightharpoonup} \mathsf v_\sigma,\ \mathsf w_\sigma \qquad & \text{weak$*$-ly in} & \ L^\infty(0,T;H^2(\mathbb T^2 \times 2 \mathbb T)). 
        \end{aligned}
    \end{equation}
    Moreover, $ (\mathsf v_\sigma, \mathsf w_\sigma) $ is a solution to the limit system \eqref{sys:hst-bsnq-intro}.    
\end{theorem}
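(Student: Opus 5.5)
The convergence will follow from the uniform bound \eqref{thmineq:uniform-estimate} of Theorem \ref{thm:uniform-est} combined with compactness. We do not try to prove strong convergence of the fast waves. Instead, we pass to the limit in a formulation in which the singular terms are eliminated.

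\emph{Convergence of $\rho$ and weak limits of the velocity.} Since $\rho - 1 = e^{\varepsilon(q-g)} - 1$, $\|q\|_{H^3}$ is uniformly bounded, and $g$ is smooth, a Moser-type composition estimate gives $\|\rho - 1\|_{L^\infty(0,T;H^3)} \le C\varepsilon$. Next, \eqref{thmineq:uniform-estimate} bounds $\vec v$ and $w$ uniformly in $L^\infty(0,T;H^2)\cap L^2(0,T;H^3)$. Here the bound on $w$ in $H^2$ comes from the $w,\dz w$ terms, and $\nabla\vec v,\ w\in L^2H^3$. Banach--Alaoglu then gives a subsequence converging to some $(\mathsf v_\sigma,\mathsf w_\sigma)$ in the weak and weak-$*$ senses stated in \eqref{thmcvg:weak_cnvgt}. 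The continuity in time $C([0,T];H^2)$ will follow at the end from the limit equations.

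\emph{Constraints.} Multiplying \eqref{hbsym-continuity} by $\varepsilon$ gives
\[
\dvh \vec v + \dz w = \varepsilon\bigl(w\dz g - \dt q - \vec v\cdot\nablah q - w\dz q\bigr),
\]
whose right-hand side is $\mathcal O(\varepsilon)$ in $L^\infty L^2$, because $\varepsilon\dt q$ is bounded in $H^2$. This yields $\dvh\mathsf v_\sigma+\dz\mathsf w_\sigma=0$.

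\emph{Momentum equation with the fast waves filtered out.} Taking $\dz$ of \eqref{hbsym-h-momentum} removes the singular pressure term $\varepsilon^{-1}\dz\nablah q$, since by \eqref{hbsym-v-momentum} this term equals $\nablah(\varepsilon^2\Delta w/\rho - \varepsilon^2 D_t w)=\mathcal O(\varepsilon)$. Alternatively, and more directly, we test \eqref{hbsym-h-momentum} against horizontal divergence-free test functions of the form $\dz$ of potentials. I would proceed as follows. Take a test function $\boldsymbol\phi(\vec x_h,z,t)$ with $\dvh\int\boldsymbol\phi\,dz$ absent. More precisely, decompose $\frac1\varepsilon\nablah q = \frac1\varepsilon\nablah\bar q + \frac1\varepsilon\nablah(q-\bar q)$, where $\bar q$ is the vertical average. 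The second piece is bounded by $\|\dz q/\varepsilon\|_{H^2}$, so it has a weak limit $\nablah\Pi_1$ that is merely bounded. The first piece is a pure horizontal gradient independent of $z$, which vanishes against test functions $\boldsymbol\phi$ with $\dvh\int_{2\mathbb T}\boldsymbol\phi\,dz=0$.

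\emph{Main obstacle: the nonlinear terms.} The nonlinear terms $\vec v\cdot\nablah\vec v + w\dz\vec v$ require strong convergence of $\vec v$. Here $\dt\vec v$ is \emph{not} uniformly bounded; only $\varepsilon\dt\vec v$ is, in view of the fast horizontal acoustic waves. I would handle this by splitting $\vec v$ into its vertical mean $\bar{\vec v}$ and the fluctuation $\vec v-\bar{\vec v}$.

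For the fluctuation $\tilde{\vec v}$, the equation has singular term $\frac1\varepsilon\nablah(q-\bar q)$, which is bounded. Hence $\dt\tilde{\vec v}$ is bounded in $L^2H^{1}$, and Aubin--Lions gives strong convergence in $L^2H^{2}$.

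For the barotropic part $\bar{\vec v}$, I would apply the Helmholtz (Leray) decomposition in $\mathbb T^2$. The divergence-free part $\mathbb P\bar{\vec v}$ has bounded time derivative, so it converges strongly. The gradient part $\mathbb Q\bar{\vec v}$, paired with $\bar q$, obeys a 2D acoustic wave system with $\varepsilon^{-1}$ frequency. Averaging \eqref{hbsym-continuity} vertically shows that $\dvh\bar{\vec v}=\mathcal O(\varepsilon)$ up to the term $\varepsilon\dt\bar q$, and this term is only bounded, not small. Consequently, $\mathbb Q\bar{\vec v}$ need not converge strongly. Its self-interaction $\mathbb Q\bar{\vec v}\cdot\nablah\mathbb Q\bar{\vec v}$ is a gradient plus a term that vanishes weakly, by the Lions--Masmoudi compensated compactness argument on the torus. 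Cross terms with the strongly converging parts converge weakly. The limit of $\mathbb Q\bar{\vec v}$ itself is zero, since $\dvh\mathsf{\bar v}_\sigma=0$ by the constraint.

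Combining these pieces, all gradient contributions are collected into $\nablah\Pi_p$. From the $\dz$ relation we obtain $\dz\Pi_p=0$, and the viscous term $\rho^{-1}\Delta\vec v\to\Delta\mathsf v_\sigma$ because $\rho\to1$ strongly. This identifies $(\mathsf v_\sigma,\mathsf w_\sigma)$ as a solution of \eqref{sys:hst-bsnq-intro}. The step I expect to be hardest is the acoustic-resonance argument for the barotropic gradient part.
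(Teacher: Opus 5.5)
Your argument is correct, and it reaches the limit by a lighter route than the paper's.

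\textbf{Comparison with the paper.} The paper builds the full $\varepsilon$-dependent three-wave decomposition $\mathcal U=\mathcal U_\sigma+\mathcal U_{\mathrm{ha}}+\mathcal U_{\mathrm{va}}$. It proves time-compactness of $\mathcal U_\sigma$ and of the filtered waves $e^{\frac t\varepsilon\mathcal L_{\mathrm{ha}}}\mathcal U_{\mathrm{ha}}$ and $\varepsilon e^{\frac{t}{\varepsilon^2}\mathcal L^\varepsilon_{\mathrm{va}}}\mathcal U_{\mathrm{va},\varepsilon}$. It then shows that the fast parts tend weakly to zero by integrating by parts in time, which requires inverting $\mathcal L^\varepsilon_{\mathrm{va}}$ on $(\ker\mathcal L^\varepsilon_{\mathrm{va}})^\perp$ (Lemma \ref{lemma:Lva-decomposition}).

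You use only the $\varepsilon$-independent splitting $\vec v=\mathbb P\overline{\vec v}+\mathbb Q\overline{\vec v}+\widetilde{\vec v}$, and three observations make this enough.
\begin{itemize}
\item The only singular term in the equation for $\widetilde{\vec v}$ is $\nablah(\widetilde q/\varepsilon)$. It is already bounded by the uniform estimate on $\dz q/\varepsilon$, so the very fast vertical acoustic wave never has to be filtered.
\item $w$ and $\widetilde q/\varepsilon$ need only weak convergence. $w$ enters only through $w\dz\vec v=w\dz\widetilde{\vec v}$, a weak-times-strong product, and you should state this explicitly. $\widetilde q/\varepsilon$ enters linearly.
\item The weak limit of $\mathbb Q\overline{\vec v}$ is shown to vanish from the limit constraint $\dvh\overline{\mathsf v}_\sigma=0$, not from an oscillation argument.
\end{itemize}
What the paper's route buys in addition is structural information you discard. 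It gives strong compactness of the whole slow vector, including $\mathsf w_\sigma$ and $\mathsf m_q$, and of the filtered horizontal acoustic waves. That is the starting point for describing the oscillating part itself. For identifying the limit of the slow dynamics alone, your route suffices.

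\textbf{Two corrections.}

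First, the step you flag as hardest takes one line. The momentum equation \eqref{hbsym-h-momentum} is in non-conservative form, and $\mathbb Q\overline{\vec v}=\nablah\Phi_h$ with $\Phi_h$ independent of $z$. Hence $\mathbb Q\overline{\vec v}\cdot\nablah\mathbb Q\overline{\vec v}=\frac12\nablah|\nablah\Phi_h|^2$ exactly; this is the paper's $R_3$ in \eqref{limit:h-momentum-05}. It is annihilated by your test functions, so no Lions--Masmoudi compensated compactness is needed.

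Second, $\nablah(\varepsilon^2\rho^{-1}\Delta w-\varepsilon^2 D_t w)$ is not $\mathcal O(\varepsilon)$ in norm. The term $\varepsilon^2\dt w$ is genuinely of order one; compare \eqref{time-derivative-w}. It does tend to zero in distributions, because $\varepsilon^2\dt w=\dt(\varepsilon^2 w)$ and $\varepsilon^2 w\to0$. So the weak limit $\Pi_1$ of $\widetilde q/\varepsilon$ satisfies $\dz\Pi_1=0$. Since $\overline{\Pi_1}=0$, in fact $\Pi_1=0$, and the limit pressure is $z$-independent as required.
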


\begin{remark}
    The compactness \eqref{thmcvg:weak_cnvgt} is not sufficient in general to verify that $ (\mathsf v_\sigma, \mathsf w_\sigma) $ is a solution to the limit system. We leave the detailed compactness to Proposition \ref{prop:compactness}, below. In particular, the three-wave decomposition will be discussed in details in Section \ref{subsec:wave-dcmpt}. 
\end{remark}

\begin{proof}[Proof of the main theorems]
The proofs of Theorems \ref{thm:uniform-est} and \ref{thm:limit} are presented in Sections \ref{subsec:uniform-est} and \ref{subsec:limit-sys}, respectively. 
\end{proof}

\subsection{Linear wave analysis, three-scales separation, and projection operators}
\label{subsec:linear}

To better understand the wave separation phenomenon in the singular limit $ \varepsilon \rightarrow 0 $ of system \eqref{sys:hst-bsnq-sym}, we consider the following linear system:
\begin{subequations}
    \label{sys:linear-waves}
    \begin{align}
        \label{eq:linear-01}
        \dt \eta &&  + \frac{1}{\varepsilon} (\dvh \phi_\vec v + \dz \phi_w) && = 0, \\
        \label{eq:linear-02}
        \dt \phi_\vec v && + \frac{1}{\varepsilon}\nablah \eta && = 0, \\
        \label{eq:linear-03}
        \varepsilon^2 \dt \phi_w && +\frac{1}{\varepsilon} \dz \eta && = 0. 
    \end{align}
\end{subequations}
For any $ \vec k = (\vec k_h, k_3) \in 2\pi \mathbb Z^2 \times \pi \mathbb Z $, we search the $ \vec k $-th mode of the linear system \eqref{sys:linear-waves}, i.e., $ (\eta,\phi_\vec v, \phi_w)(\vec x, t) = (\hat \eta_\vec k, \hat \phi_{\vec v, \vec k}, \hat \phi_{w,\vec k}) e^{i (\vec k \cdot \vec x + \omega_\vec k t)} $. Without loss of generality, we only discuss the case when $ \vec k \neq 0 $. 
It is then straightforward to verify that
\begin{equation}
    \label{eq:k-th-mode}
    \begin{aligned}
        \varepsilon \omega_\vec k \hat \eta_\vec k + (\vec k_h \cdot \hat \phi_{\vec v,\vec k} + k_3 \hat \phi_{w,\vec k}) & = 0, \\
        \varepsilon \omega_\vec k \hat \phi_{\vec v, \vec k} +  \hat \eta_{\vec k} \vec k_h & = 0, \\
        \varepsilon^3 \omega_\vec k \hat \phi_{w,\vec k} + \hat \eta_{\vec k} k_3 & =0.
    \end{aligned}
\end{equation}
One can directly calculate that
\begin{itemize}
    \item {Mean wave:} \begin{equation}\label{sol:k-th-mode-meanflow}
    \begin{gathered}
        \omega_\vec k = \omega_{\vec k,0} = 0, \ \begin{pmatrix}
        \hat \eta_{\vec k, 0} \\ \hat \phi_{\vec v, \vec k, 0} \\ \hat \phi_{w,\vec k, 0}
        \end{pmatrix} = \begin{pmatrix}
            0 \\ \hat \phi_{\vec v,\vec k, \sigma} \\ \hat \phi_{w,\vec k, \sigma}
        \end{pmatrix}, \quad  \text{where}  \\
        \ \vec k_h \cdot \hat \phi_{\vec v,\vec k, \sigma} + k_3 \hat \phi_{w,\vec k, \sigma} = 0 \ \text{is the divergence-free mode};
        \end{gathered}
    \end{equation}
    \item {Horizontal acoustic wave:} \begin{equation}
        \label{sol:k-th-mode-horizontal-acoustic}
    \begin{gathered}
          \  k_3 = 0, \ \vec k_h \neq 0, \  \omega_\vec k = \omega_{\vec k_h, 0,\pm} = \pm \frac{\vert \vec k_h \vert}{\varepsilon} , \\
         \begin{pmatrix}
            \hat \eta_{\vec k, \pm} \\ \hat \phi_{\vec v, \vec k, \pm} \\ \hat \phi_{w,\vec k, \pm} 
        \end{pmatrix}  = \begin{pmatrix}
            1 \\
            \mp \frac{\vec k_h}{\vert \vec k_h \vert} \\ 0
        \end{pmatrix};
    \end{gathered}\\
    \end{equation}
    \item {Vertical acoustic wave:} \begin{equation}
        \label{sol:k-th-mode-vertical-acoustic}
    \begin{gathered}
         \ k_3 \neq 0, \ \omega_\vec k = \omega_{\vec k, \pm} = \pm \frac{\sqrt{\varepsilon^2 \vec k_h^2 + k_3^2}}{\varepsilon^2}, \\
        \begin{pmatrix}
            \hat \eta_{\vec k, \pm} \\ \hat \phi_{\vec v, \vec k, \pm} \\ \hat \phi_{w, \vec k, \pm} 
        \end{pmatrix} = \begin{pmatrix}
            \varepsilon  \\ \mp \frac{\varepsilon^2 \vec k_h}{\sqrt{\varepsilon^2 \vec k_h^2 + k_3^2}} \\ \mp \frac{k_3}{\sqrt{\varepsilon^2 \vec k_h^2 + k_3^2}}
        \end{pmatrix} = \begin{pmatrix}
        0 \\ 0 \\ \mp \frac{k_3}{\vert k_3 \vert} 
    \end{pmatrix} + \mathcal O(\varepsilon).
    \end{gathered} 
    \end{equation}
\end{itemize}
Here we have chosen the scale of $ (\hat \eta_{\vec k}, \hat \phi_{\vec v, \vec k}, \hat \phi_{w, \vec k}) $ to be $ \mathcal O(1) $ for convenience. One can immediately see that there are three time-scales in the linear system \eqref{sys:linear-waves}: The mean flow, characterized by $ \omega_\vec k = 0 $, referred to as the slow wave, below; the horizontal acoustic wave, characterized by $ \omega_\vec k \simeq \mathcal O (1/\varepsilon) $, referred to as the fast wave, below; and the vertical acoustic wave, characterized by $ \omega_\vec k \simeq \mathcal O (1/\varepsilon^2) $, referred to as the very fast wave, below. 

\bigskip
This inspires us to introduce the projection operators into the three types of wave from above.
Let $ \overline\cdot $ and $ \widetilde{\cdot} $ be the barotropic and baroclinic projections, respectively: For any $ f:\mathbb T^2 \times 2 \mathbb T \mapsto \mathbb R $,
\begin{equation}
    \label{def:barotropic-baroclinic}
    \begin{gathered}
    \overline f(x,y) := \frac{1}{2} \int_0^2 f(x,y,z)\,dz, \qquad \widetilde{f}(x,y,z):= f - \overline f(x,y),\\
    \text{and} \qquad 
    m_f:= \frac{1}{2}\int_{\mathbb T^2 \times 2 \mathbb T} f(x,y,z)\,dxdydz.
    \end{gathered}
\end{equation}
Then we define the wave projections as follows:
\begin{definition}[Wave projections]\label{def:wave-proj}
    Consider any vector field $ (\eta, \phi_\vec v, \phi_w): (x,y,z) \in \mathbb T^2 \times 2 \mathbb T \mapsto \mathbb R \times \mathbb R^2 \times \mathbb R $. 
    Let $ \Phi_h: \mathbb T^2 \mapsto \mathbb R $ and $ \Phi_3: \mathbb T^2 \times 2 \mathbb T \mapsto \mathbb R $ be the horizontal and vertical potential functions, given by
    \begin{equation}
        \label{def:potential-fns}
        \begin{aligned}
        \Phi_h & = \Phi_h(\phi_\vec v):= \Delta_h^{-1} \dvh \overline \phi_\vec v, \\ \quad \Phi_3 & = \Phi_3^\varepsilon = \Phi_3^\varepsilon(\phi_\vec v, \phi_w):= (\varepsilon^2 \Delta_h + \partial_{zz})^{-1}(\dvh \widetilde{\phi}_\vec v+\dz \phi_w).
        \end{aligned}
    \end{equation}
    Then we define 
    \begin{itemize}
        \item Horizontal acoustic projection: 
        \begin{equation}
            \label{def:ha_projector}
        \begin{aligned}
            & \begin{pmatrix}
            \phi_{\vec v, ha} \\ \phi_{w, ha}
            \end{pmatrix} = P_\mrm{ha}(\phi_\vec v, \phi_w):= \begin{pmatrix}
                \nablah \Phi_h(\phi_\vec v) \\ 0
            \end{pmatrix}\quad \text{and} \quad \\ 
            & \mathcal P_\mrm{ha}(\eta, \phi_\vec v, \phi_w):= \begin{pmatrix}
                \overline \eta - m_\eta \\ P_\mrm{ha}(\phi_\vec v, \phi_w)
            \end{pmatrix};
        \end{aligned}
        \end{equation}

        \item Vertical acoustic projection:
        \begin{equation}
            \label{def:va-projector}
            \begin{aligned} 
                & \begin{pmatrix}
            \phi_{\vec v, va} \\ \phi_{w, va}
            \end{pmatrix} = P^\varepsilon_\mrm{va}(\phi_\vec v, \phi_w):= \begin{pmatrix}
                \varepsilon^2 \nablah \Phi^\varepsilon_3 \\ \dz \Phi^\varepsilon_3
            \end{pmatrix} \quad \text{and} \quad \\ 
            &\mathcal P^\varepsilon_\mrm{va}(\eta, \phi_\vec v, \phi_w):= \begin{pmatrix}
            \widetilde \eta \\ P^\varepsilon_\mrm{va}(\phi_\vec v, \phi_w)
            \end{pmatrix};
            \end{aligned}
        \end{equation}

        \item Slow wave projection:
        \begin{equation}
            \label{def:sw-projector}
            \begin{aligned}
            & \begin{pmatrix}
            \phi_{\vec v, \sigma} \\ \phi_{w, \sigma}
            \end{pmatrix} = P^\varepsilon_\sigma(\phi_\vec v, \phi_w):= \begin{pmatrix}
                \phi_\vec v \\ \phi_w
            \end{pmatrix} - P_\mrm{ha}(\phi_\vec v, \phi_w) - P^\varepsilon_\mrm{va}(\phi_\vec v, \phi_w) \\
            & \text{and} \quad 
             \mathcal P^\varepsilon_\sigma(\eta,\phi_\vec v, \phi_w):= \begin{pmatrix}
                \eta \\ \phi_\vec v \\ \phi_w
            \end{pmatrix} - \mathcal P_\mrm{ha}(\eta,\phi_\vec v, \phi_w) - \mathcal P^\varepsilon_\mrm{va}(\eta,\phi_\vec v, \phi_w).
        \end{aligned}
        \end{equation}
    
    \end{itemize}
\end{definition}

Using the notations in Definition \ref{def:wave-proj}, one can write down, from the linear system \eqref{sys:linear-waves}, the following evolutionary equations for the three types of waves:

\begin{itemize}
    \item 
{\par\noindent\bf Mean flow:}
\begin{equation}
    \label{eq:meanflow-l}
    \dvh \phi_{\vec v, \sigma} + \dz \phi_{w,\sigma} = 0, \quad \partial_t m_\eta= \partial_t \phi_{\vec v,\sigma} = \partial_t \phi_{w,\sigma} = 0;
\end{equation}

\item 
{\par\noindent\bf The horizontal acoustic wave:}
\begin{equation}
    \label{eq:ha-l}
    \begin{gathered}
        \partial_t (\overline{\eta} - m_\eta) + \frac{1}{\varepsilon} \Delta_h \Phi_h = 0, \\
        \partial_t \nablah \Phi_h + \frac{1}{\varepsilon} \nablah \overline{\eta} = 0,
    \end{gathered}\quad \text{or equivalently}, \quad (\partial_t + \frac{1}{\varepsilon}\mathcal L_\mrm{ha} )\begin{pmatrix} \overline \eta - m_\eta \\ \nablah \Phi_h \\ 0 \end{pmatrix}  = 0,
\end{equation}
with the conservative energy given by 
\begin{equation}
    \label{def:ha-l-energy}
    \norm{\overline \eta(t) - m_\eta, \nablah \Phi_h(t)}{L^2}^2.
\end{equation}
Here $ \mathcal L_\mrm{ha} $ is the horizontal acoustic operator given by
\begin{equation}
    \label{def:horizontal-aw-operator}
    \mathcal L_\mrm{ha} \begin{pmatrix} \overline \eta - m_\eta \\ \nablah \Phi_h \\ 0 \end{pmatrix} := \begin{pmatrix}
        \Delta_h \Phi_h \\ \nablah (\overline \eta - m_\eta) \\ 0
    \end{pmatrix},
\end{equation}
and it is easy to verify that $ \mathcal L_\mrm{ha} $ is an skew-adjoint operator acting on vectors of the form $  (\overline \eta - m_\eta, \nablah \Phi_h, 0 ) $ and therefore $ \mathcal L_\mrm{ha} $ only has non-zero pure imaginary eigenvalues; see \eqref{sol:k-th-mode-horizontal-acoustic}. In particular, 
\begin{equation}
\label{norm-presering-ha}e^{\frac{t}{\varepsilon}\mathcal L_\mrm{ha}} \qquad \text{is $ H^s$-norm-preserving and fast oscillating};
\end{equation}

\item
{\par\noindent\bf The vertical acoustic wave:} 
\begin{equation}
    \label{eq:va-l}
    \begin{gathered}
    \partial_t \widetilde \eta + \frac{1}{\varepsilon}(\varepsilon^2 \Delta_h \Phi_3 + \partial_{zz}\Phi_3) = 0,\\
    \varepsilon^2 \partial_t \nablah \Phi_3 + \frac{1}{\varepsilon} \nablah \widetilde \eta = 0,\\
    \varepsilon^2 \partial_t \dz \Phi_3 + \frac{1}{\varepsilon} \dz \widetilde{\eta} = 0,
\end{gathered} \qquad \text{or equivalently,} \qquad ( \partial_t + \frac{1}{\varepsilon^2}\mathcal L_\mrm{va}^\varepsilon) \begin{pmatrix}
    \frac{\widetilde\eta}{\varepsilon} \\ \varepsilon \nablah \Phi_3 \\ \dz \Phi_3
\end{pmatrix} = 0,
\end{equation}
with the conservative energy given by
\begin{equation}
    \label{def:va-l-energy}
    \norm{\frac{\widetilde \eta(t)}{\varepsilon},\varepsilon \nablah \Phi_3(t), \dz \Phi_3(t)}{L^2}^2.
\end{equation}
Here $ \mathcal L_\mrm{va}^\varepsilon $ is the vertical acoustic operator given by
\begin{equation}
    \label{def:vertical-aw-operator}
    \mathcal L_\mrm{va}^\varepsilon \begin{pmatrix}
    \frac{\widetilde\eta}{\varepsilon} \\ \varepsilon \nablah \Phi_3 \\ \dz \Phi_3
\end{pmatrix} = \begin{pmatrix}
    \varepsilon^2 \Delta_h \Phi_3 + \partial_{zz} \Phi_3 \\ \nablah \widetilde \eta \\ \dz ( \frac{\widetilde \eta}{\varepsilon} )
\end{pmatrix},
\end{equation}
and it is easy to verify that $ \mathcal L^\varepsilon_\mrm{va} $ is an skew-adjoint operator acting on vectors of the form $ (\frac{\widetilde\eta}{\varepsilon}, \varepsilon \nablah \Phi_3, \dz \Phi_3 )$ and that $ \mathcal L^\varepsilon_\mrm{va} $ only has non-zero pure imaginary eigenvalues with uniform-in-$\varepsilon$ lower bound; see, \eqref{sol:k-th-mode-vertical-acoustic}. In particular, 
\begin{equation}
\label{norm-presering-va}
e^{\frac{t}{\varepsilon^2}\mathcal L^\varepsilon_\mrm{va}} \qquad \text{is $ H^s$-norm-preserving and fast oscillating}.
\end{equation}

\end{itemize}

It is straightforward to check that
\begin{equation*}
    \eqref{sys:linear-waves} = \eqref{eq:meanflow-l} + \eqref{eq:ha-l} + \eqref{eq:va-l}.
\end{equation*}
Moreover, for the vertical acoustic wave to be non-singular as $ \varepsilon \rightarrow 0 $, from \eqref{eq:va-l} and \eqref{def:va-l-energy}, one need $ \widetilde{\eta} = \mathcal O(\varepsilon) $, which is consistent with \eqref{sol:k-th-mode-vertical-acoustic}. This is equivalent to require 
\begin{equation}
\label{small-vertical-eta}
    \mathcal O(\dz \eta) = \mathcal O(\dz \widetilde \eta) = \mathcal O(\varepsilon). 
\end{equation}
Moreover, thanks to \eqref{sol:k-th-mode-vertical-acoustic}, in particular the fact that the frequency of the vertical acoustic waves is $ \mathcal O(1/\varepsilon^2) $, we have that
\begin{equation}
    \label{fast-vertical-acoustic-wave}
    \mathcal O(\partial_t^2 \eta) \geq \mathcal O(\partial_t^2 \widetilde{\eta})= \mathcal O(\frac{1}{\varepsilon^2})^2 \times \mathcal O(\varepsilon) = \mathcal O(\frac{1}{\varepsilon^3}).
\end{equation}
Then from \eqref{eq:linear-01}, it is easy to see that
\begin{equation}
    \label{time-derivative-w}
    \mathcal O(\varepsilon \dt \phi_w)\geq \mathcal O(\varepsilon^2 \partial_t^2 \eta) \geq \mathcal O(\frac{1}{\varepsilon}).
\end{equation}

\subsection{Linear energy structure}
\label{subsec:ln-energy-stuct}

In this section, we assume the following boundedness and non-degenerate condition for the density:
\begin{equation}
    \label{ene:a-priori-rho}
    0 < \frac{1}{2}\underline \rho < \rho(\vec x,t) < 2 \overline \rho < \infty,
\end{equation}
where $\underline \rho$ and $\overline{\rho}$ are positive constants. We will discuss the linear energy structure of system \eqref{sys:hst-bsnq-sym}.

\smallskip 

Denote by $ \Lambda:=(\xi, V, W):\mathbb T^2 \times 2 \mathbb T \times \mathbb R^+ \mapsto \mathbb R \times \mathbb R^2 \times \mathbb R $.
Consider the evolutionary equations associated with system \eqref{sys:hst-bsnq-sym}:

\begin{subequations}
    \label{sys:evolutionary-op}
    \begin{gather}
        \label{eq:evl-op-1}
        \partial_t \xi  +  \vec v \cdot \nablah \xi +  w \dz \xi + \frac{1}{\varepsilon} (\dvh V + \dz W) - W \dz g = G_1,\\
        \label{eq:evl-op-2}
        \begin{gathered}
        \dt V + \vec v \cdot \nablah V + w \dz V + \frac{1}{\varepsilon} \nablah \xi 
        - \frac{1}{\rho} \Delta V
        = G_2,
        \end{gathered}\\
        \label{eq:evl-op-3}
        \begin{gathered}
            \varepsilon^2 (\dt W + \vec v \cdot \nablah W + w \dz W) + \frac{1}{\varepsilon} \dz \xi 
            - \frac{\varepsilon^2}{\rho} \Delta W = \varepsilon G_3,
        \end{gathered}
    \end{gather}
\end{subequations}
or, for the sake of convenience, 
\begin{equation}
    \label{sys:evl-op-shorthand}
    \mathcal L(t,\rho, \vec v, w, \varepsilon)\Lambda = \mathcal G:= \begin{pmatrix}
        G_1 \\ G_2 \\ \varepsilon G_3
    \end{pmatrix},
\end{equation}
where $ \mathcal L $ is the linear operator on the left hand side of system \eqref{sys:evolutionary-op}. 

Taking the $ L^2 $-inner product of \eqref{sys:evl-op-shorthand} with $ 2 e^{-\varepsilon g(z)}\Lambda $ and applying integration by parts in the resultant lead to 
\begin{equation}
    \label{ene-001}
\begin{gathered}
    \dfrac{d}{dt} \norm{\frac{\xi}{e^{\varepsilon g/2}}, \frac{V}{e^{\varepsilon g/2}}, \frac{\varepsilon W}{e^{\varepsilon g/2}}}{L^2}^2 + {2}
    \int \frac{\vert \nabla V \vert^2 + \vert \varepsilon \nabla W \vert^2}{\rho e^{\varepsilon g(z)}} \,d\vec x \\
    = \int \bigl\lbrack \dvh \bigl( e^{-\varepsilon g(z)} \vec v \bigr) + \dz \bigl(e^{-\varepsilon g(z)} w \bigr) \bigr\rbrack \bigl( |\xi|^2+\vert V \vert^2 + \varepsilon^2 \vert W \vert^2 \bigr) \,d\vec x \\
    - 2 \int V^\top\nabla V \nabla (\frac{1}{\rho e^{\varepsilon g}}) \,d\vec x - 2 {\varepsilon^2} \int W \nabla W \cdot \nabla(\frac{1}{\rho e^{\varepsilon g(z)}}) \,d\vec x\\
    + 2 \int \bigl( G_1 \xi e^{-\varepsilon g(z)} + G_2 \cdot V e^{-\varepsilon g(z)} + \varepsilon G_3 W e^{-\varepsilon g(z)} \bigr) \,d\vec x =: \sum_{j=1}^{4} J_j.
\end{gathered} 
\end{equation}
\begin{remark}
\label{rm:weights}
    The weight $ e^{-\varepsilon g} $ in the energy estimate \eqref{ene-001} is to incorporate the linear term $ W \dz g $ in \eqref{eq:evl-op-1}, such that all terms on the right-hand side of \eqref{ene-001} are at least tri-linear and do not lose any regularity. See Remark \ref{rm:loss-regularity}, above. 
    
\end{remark}

First, for $ \varepsilon \in (0,1) $ small enough, one has that
\begin{equation}
    \label{ene-002}
\begin{aligned}
    \mathcal E_0(\xi,V,W) & :=\norm{\frac{\xi}{e^{\varepsilon g/2}}, \frac{V}{e^{\varepsilon g/2}}, \frac{\varepsilon W}{e^{\varepsilon g/2}}}{L^2}^2 \geq \mathfrak c_0 \norm{\xi, V, \varepsilon W}{L^2}^2,\\
    \mathcal D_0(V, W) & :=
    \int \frac{\vert \nabla V \vert^2 + \vert \varepsilon \nabla W \vert^2}{\rho e^{\varepsilon g(z)}} \,d\vec x  
    \geq \mathfrak c_0 \norm{\nabla V,  \varepsilon \nabla W}{L^2}^2,
\end{aligned}
\end{equation}
for some constants $ \mathfrak c_0 \in (0,\infty) $.

Next, we will estimate $ J_j $, $ j = 1,2,3,4 $, on the right hand side of \eqref{ene-001}. Recalling \eqref{def:symmetric_variable}, one has that
\begin{equation}
    \label{ene-004}
    \begin{aligned}
        J_1 \lesssim (\norm{\vec v, w,\nablah \vec v, \dz w}{L^\infty}^2 + 1) \mathcal E_0(\xi,V,W).
    \end{aligned}
\end{equation}
Hereafter, to shorten the notation, we use $ A \lesssim B $ to represent $ A \leq CB $ for some $ C $ independent of $ \varepsilon $. 
Meanwhile, one can verify that
\begin{equation}
    \label{ene-006}
    \begin{gathered}
    J_2 + J_3 \lesssim (\norm{\nabla q}{L^\infty} + 1)\norm{V,\varepsilon W}{L^2} \norm{\nabla V, \varepsilon \nabla W}{L^2}.
\end{gathered}
\end{equation}
Therefore, applying the Cauchy-Schwarz inequality in \eqref{ene-006} yields 
\begin{equation}
    \label{ene-007}
    \begin{gathered}
    J_2 + J_3 \leq \frac{1}{2} \mathcal D_0(V, W) 
     + C (1 + \norm{\nabla q}{L^\infty}^2)\mathcal E_0(\xi,V,W),
\end{gathered}
\end{equation}
for some constant $ C \in (0,\infty) $. 
Directly, we have
\begin{equation}
    J_4 \lesssim \|G_1, G_2, G_3\|_{L^2}  \mathcal E_0(\xi,V,W)^{\frac12}.
\end{equation}

We have proved the following lemma:
\begin{lemma}
    \label{lm:energy-structure}
    Under the {\it a priori} assumption \eqref{ene:a-priori-rho}, for $ \varepsilon $ small enough,  any solution $ (\xi, V, W) $ to \eqref{sys:evl-op-shorthand} satisfies 
    \begin{equation}
    \label{ene-009}
    \begin{gathered}
    \dfrac{d}{dt} \mathcal E_0(\xi, V,W) + \frac{1}{2}\mathcal D_0(V, W) \lesssim (\norm{ \vec v, w, \nablah \vec v, \dz w,  \nabla q}{L^\infty}^2 + 1) \mathcal E_0(\xi, V, W) \\
    + \norm{G_1, G_2, G_3}{L^2}\mathcal E_0(\xi, V, W)^{1/2}.
    \end{gathered}
    \end{equation}    
\end{lemma}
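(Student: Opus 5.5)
The plan is to assemble the estimate from the weighted energy identity \eqref{ene-001}, which is obtained by testing \eqref{sys:evl-op-shorthand} against $2e^{-\varepsilon g(z)}\Lambda$. The first step is to check how the singular terms cancel. Integrating by parts, the $\frac1\varepsilon$ contributions combine to
\[
\frac{2}{\varepsilon}\int e^{-\varepsilon g}\bigl(\xi\,\dvh V+\xi\,\dz W+V\cdot\nablah\xi+W\dz\xi\bigr)\,d\vec x=-\frac{2}{\varepsilon}\int \xi W\,\dz e^{-\varepsilon g}\,d\vec x=2\int \xi W \dz g\, e^{-\varepsilon g}\,d\vec x.
\]
This term exactly cancels the linear term $-2\int W\dz g\,\xi e^{-\varepsilon g}$ coming from \eqref{eq:evl-op-1}. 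That cancellation is the reason for the weight (Remark \ref{rm:weights}), and I expect it to be the one delicate point, since a non-weighted estimate would leave $\int W\dz g\,\xi$, which is not controlled by $\mathcal E_0$ because only $\varepsilon W$ appears there.

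The second step handles the transport terms. Integration by parts gives $-\int\dvh(e^{-\varepsilon g}\vec v)+\dz(e^{-\varepsilon g}w)$ times $|\xi|^2+|V|^2+\varepsilon^2|W|^2$, which after moving to the right-hand side is $J_1$. Here the $\varepsilon^2$ in front of the $W$-transport matches the $\varepsilon^2|W|^2$ in $\mathcal E_0$. Expanding the derivative gives $e^{-\varepsilon g}(\dvh\vec v+\dz w-\varepsilon w\dz g)$. Since $g$ is smooth with $\dz g=\mathcal O(\varepsilon)$ and $e^{\pm\varepsilon g}$ is bounded above and below, this yields \eqref{ene-004}.

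The third step treats the viscous terms. Writing $-\frac1\rho\Delta V$ tested against $2e^{-\varepsilon g}V$ and integrating by parts gives $2\int\frac{|\nabla V|^2}{\rho e^{\varepsilon g}}$ plus the commutator $J_2$, and similarly for $W$ with the factor $\varepsilon^2$. Since $\rho=e^{\varepsilon(q-g)}$, we have
\[
\nabla\Bigl(\frac{1}{\rho e^{\varepsilon g}}\Bigr)=-\varepsilon\nabla q\, e^{-\varepsilon q},
\]
and $e^{-\varepsilon q}=(\rho e^{\varepsilon g})^{-1}$ is bounded by \eqref{ene:a-priori-rho} and the boundedness of $g$. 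This gives \eqref{ene-006}. Young's inequality then absorbs half of $\mathcal D_0$, using the lower bound \eqref{ene-002}, which also requires \eqref{ene:a-priori-rho} and $\varepsilon$ small. The result is \eqref{ene-007}.

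The last step estimates the forcing term $J_4$ by Cauchy--Schwarz, again using \eqref{ene-002}. Collecting the bounds on $J_1,\dots,J_4$, moving $\frac12\mathcal D_0$ to the left, and using $\|\nabla q\|_{L^\infty}^2+1$ dominated by the bracket gives \eqref{ene-009}.
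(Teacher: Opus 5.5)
Your proposal is correct and follows the paper's proof essentially step for step: you derive the weighted identity \eqref{ene-001} by testing with $2e^{-\varepsilon g}\Lambda$, bound $J_1$ as in \eqref{ene-004}, absorb $J_2+J_3$ into $\frac12\mathcal D_0$ via Young's inequality, and handle $J_4$ by Cauchy--Schwarz. You also fill in details the paper leaves implicit: you check that the $\frac1\varepsilon$ terms reduce exactly to $2\int \xi W\dz g\,e^{-\varepsilon g}\,d\vec x$, which cancels the linear term, and you use $\nabla(\rho e^{\varepsilon g})^{-1}=-\varepsilon\nabla q\,e^{-\varepsilon q}$, which even gives the commutator bound with an extra factor $\varepsilon$.
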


\subsection{Acoustic wave structure}
\label{subsec:ln-acoustic-wave}

The energy evolutionary obtained in Lemma \ref{lm:energy-structure} is not enough to close the energy estimate for the nonlinear problem. In particular, the estimate of $ W $ in \eqref{ene-009} becomes singular as $ \varepsilon \rightarrow 0 $. To overcome this degeneracy/singularity, one needs to establish the evolution of the acoustic wave energy. 

\smallskip 
To be more precise, 
let $ (\xi_\varepsilon,V_\varepsilon,W_\varepsilon):= (\xi e^{-\varepsilon g}, V e^{-\varepsilon g}, W e^{-\varepsilon g}) $.
Multiplying \eqref{sys:evolutionary-op} with $ e^{-\varepsilon g} $ leads to 
\begin{gather}
    \label{apndx-eq:001}
\begin{gathered}
     \dt \xi_\varepsilon +  \vec v \cdot \nablah \xi_\varepsilon +  w \dz \xi_\varepsilon + \frac{1}{\varepsilon}(\dvh V_\varepsilon + \dz W_\varepsilon) 
    = G_1 e^{-\varepsilon g} -  (\varepsilon w) \xi_\varepsilon \dz g,
\end{gathered}\\
\label{apndx-eq:002}
\begin{gathered}
    \dt V_\varepsilon + \vec v \cdot \nablah V_\varepsilon  + w \dz V_\varepsilon + \frac{1}{\varepsilon} \nablah \xi_\varepsilon - \frac{1}{\rho}\Delta V_\varepsilon 
    = {G_2e^{-\varepsilon g}}
    -(\varepsilon w) V_\varepsilon \dz g \\
    + \frac{1}{\rho} \bigl\lbrack \dz(\varepsilon \dz g V_\varepsilon) + \varepsilon \dz  g \dz V_\varepsilon + (\varepsilon \dz g)^2 V_\varepsilon \bigr\rbrack,
\end{gathered}\\
\label{apndx-eq:003}
\begin{gathered}
    \varepsilon^2 (\dt W_\varepsilon + \vec v \cdot \nablah W_\varepsilon + w \dz W_\varepsilon) + \frac{e^{-\varepsilon g}}{\varepsilon} \dz \xi - \frac{\varepsilon^2}{\rho} \Delta W_\varepsilon
    = {\varepsilon G_3e^{-\varepsilon g}} \\
    - \varepsilon^3 w W_\varepsilon \dz g 
    + \frac{\varepsilon^2}{\rho} \bigl\lbrack \dz (\varepsilon \dz g W_\varepsilon) + \varepsilon \dz g \dz W_\varepsilon + (\varepsilon \dz g)^2 W_\varepsilon \bigr\rbrack.
\end{gathered}
\end{gather}

Next, applying $ \dt $ to \eqref{apndx-eq:001} yields
\begin{equation}
    \label{apndx-eq:004}
    \begin{gathered}
         \partial_{tt} \xi_\varepsilon  + \vec v \cdot \nablah \dt \xi_\varepsilon +  w \dz \dt \xi_\varepsilon + \frac{1}{\varepsilon}(\dvh \dt V_\varepsilon + \dz \dt W_\varepsilon ) \\ 
        = - \dt\vec{v} \cdot \nablah \xi_\varepsilon - \dt w \dz \xi_\varepsilon - \dt (\varepsilon w \xi_\varepsilon \dz g) 
         + \dt (G_1 e^{-\varepsilon g}) =: \frac{H_1}{\varepsilon}.
    \end{gathered}
\end{equation}
From \eqref{apndx-eq:002} and \eqref{apndx-eq:003}, one can calculate that
\begin{equation}
    \label{apndx-eq-006}
    \begin{gathered}
    \dvh \dt V_\varepsilon + \dz \dt W_\varepsilon = 
    - \frac{1}{\varepsilon}\Delta_h \xi_\varepsilon - \frac{1}{\varepsilon^3} \dz ( e^{-\varepsilon g} \partial_{z} \xi) 
    + \frac{1}{\rho} \Delta (\dvh V_\varepsilon + \dz W_\varepsilon)\\
    - \dvh (\vec v \cdot \nablah V_\varepsilon + w \dz V_\varepsilon) - \dz (\vec v \cdot \nablah W_\varepsilon + w \dz W_\varepsilon) 
    + \nablah (\frac{1}{\rho}) \cdot \Delta V_\varepsilon
    + \dz (\frac{1}{\rho}) \Delta W_\varepsilon \\
    + \dvh \biggl\lbrace {G_2 e^{-\varepsilon g}-\varepsilon w V_\varepsilon \dz g}  + \frac{1}{\rho} \biggl\lbrack \substack{\dz (\varepsilon \dz g V_\varepsilon) + \varepsilon \dz g \dz V_\varepsilon \\
    +(\varepsilon \dz g)^2 V_\varepsilon} \biggr\rbrack \biggr\rbrace \\
    + \dz \biggl\lbrace {\frac1\varepsilon G_3 e^{-\varepsilon g}}- \varepsilon w W_\varepsilon \dz g  + \frac{1}{\rho} \biggl\lbrack \substack{ \dz (\varepsilon \dz g W_\varepsilon) + \varepsilon \dz g \dz W_\varepsilon \\
    +(\varepsilon \dz g)^2 W_\varepsilon
    }\biggr\rbrack \biggr\rbrace\\
    =: - \frac{1}{\varepsilon}\Delta_h \xi_\varepsilon - \frac{1}{\varepsilon^3} \dz ( e^{-\varepsilon g} \partial_{z} \xi) 
    + \frac{1}{\rho} \Delta (\dvh V_\varepsilon + \dz W_\varepsilon)  - H_2,
\end{gathered}
\end{equation}
while, from \eqref{apndx-eq:001}, one has that 
\begin{equation}
    \label{apndx-eq:007}
    \begin{aligned}
        \Delta (\dvh V_\varepsilon + \dz W_\varepsilon) = &- \varepsilon \dt \Delta  \xi_\varepsilon - \varepsilon \vec v \cdot \nablah \Delta \xi_\varepsilon - \varepsilon w \dz \Delta \xi_\varepsilon 
        - \varepsilon \Delta\vec v\cdot \nablah \xi_\varepsilon 
        - 2 \varepsilon \nabla\vec v: \nablah \nabla \xi_\varepsilon  \\
        &- \varepsilon \Delta w \dz \xi_\varepsilon - 2 \varepsilon \nabla w \cdot\dz \nabla \xi_\varepsilon  
        + \varepsilon \Delta (G_1 e^{-\varepsilon g} - (\varepsilon w) \xi_\varepsilon \dz g ) \\
        =: &- \varepsilon \dt \Delta  \xi_\varepsilon - \varepsilon \vec v \cdot \nablah \Delta \xi_\varepsilon - \varepsilon w \dz \Delta \xi_\varepsilon - H_3.
    \end{aligned}
\end{equation}
Combining \eqref{apndx-eq:004}--\eqref{apndx-eq:007} yields
\begin{equation*}
    \begin{gathered}
        \partial_{tt} \xi_\varepsilon - \frac{1}{\rho} \dt \Delta \xi_\varepsilon + \vec v \cdot \nablah \dt \xi_\varepsilon +  w \dz \dt \xi_\varepsilon 
        - \frac{1}{\rho} (\vec v \cdot \nablah \Delta \xi_\varepsilon + w \dz \Delta \xi_\varepsilon) \\
        - \frac{1}{\varepsilon^2} \Delta_h \xi_\varepsilon - \frac{1}{\varepsilon^4} \dz(e^{-\varepsilon g}\dz \xi)  = \frac{H_1}{\varepsilon} + \frac{H_2}{\varepsilon} + \frac{H_3}{\varepsilon \rho} 
    \end{gathered}
\end{equation*}
or, equivalently,
\begin{equation}
    \label{eq:linear-aw}
    \begin{aligned}
        &\dt(\dt\xi_\varepsilon -  \frac{1}{\rho}\Delta \xi_\varepsilon ) +  \vec v\cdot \nablah (\dt\xi_\varepsilon - \frac{1}{\rho}\Delta \xi_\varepsilon ) +  w \dz (\dt \xi_\varepsilon - \frac{1}{\rho} \Delta \xi_\varepsilon) 
        - \frac{1}{\varepsilon^2} \Delta_h \xi_\varepsilon - \frac{1}{\varepsilon^4} \dz (e^{-\varepsilon g} \dz \xi)\\ 
        = &\frac{H_1}{\varepsilon} + \frac{H_2}{\varepsilon} + \frac{H_3}{\varepsilon\rho} + \frac{\dt \rho}{\rho^2} \Delta \xi_\varepsilon  + \frac{1}{\rho^2} \vec v \cdot \nablah \rho \Delta \xi_\varepsilon + \frac{1}{\rho^2} w \dz \rho \Delta \xi_\varepsilon 
        \\
        =: &\frac{H_1}{\varepsilon} + \frac{H_2}{\varepsilon} + \frac{H_3}{\varepsilon\rho} + \frac{H_4}{\varepsilon}.
    \end{aligned}
\end{equation}

Now we are ready to establish the acoustic wave energy estimate. Taking the $ L^2 $-inner product of \eqref{eq:linear-aw} with $ \varepsilon^2 (\dt \xi - \frac{e^{\varepsilon g}}{\rho} \Delta (\xi e^{-\varepsilon g}) ) $, after applying integration by parts, leads to
\begin{equation}
    \label{ac-ene-001}
    \begin{gathered}
    \frac{1}{2}\dfrac{d}{dt} \norm{\frac{\varepsilon \lbrack \dt \xi - \frac{e^{\varepsilon g}}{\rho} \Delta (\xi e^{-\varepsilon g})\rbrack}{ e^{\varepsilon g/2}}, \frac{\nablah \xi}{e^{\varepsilon g/2}}, \frac{\dz \xi}{\varepsilon e^{\varepsilon g/2}}}{L^2}^2 
    + \norm{\frac{\Delta_h \xi}{\rho^{1/2}e^{\varepsilon g/2}}}{L^2}^2 + \norm{e^{\varepsilon g/2}\frac{\partial_{zz}(\xi e^{-\varepsilon g})}{\varepsilon \rho^{1/2}}}{L^2}^2 \\ + (\varepsilon^2 + 1)\norm{\frac{\nablah \dz \xi}{\varepsilon \rho^{1/2}e^{\varepsilon g/2}}}{L^2}^2 
    = \frac{1}{2} \int \biggl\lbrack \substack{ \dvh(\frac{\vec v}{e^{\varepsilon g}}) +\dz(\frac{w}{ e^{\varepsilon g}}) \\ + 2 \frac{\varepsilon w \dz g }{e^{\varepsilon g}} } \biggr\rbrack \vert \varepsilon\lbrack \dt \xi  - \frac{e^{\varepsilon g}}{\rho} \Delta (\xi e^{-\varepsilon g})\rbrack\vert^2 \,d\vec x \\
    + \int \biggl\lbrace \substack{ {-}\frac{{e^{\varepsilon g}}\dz(\xi e^{-\varepsilon g}\dz g) \partial_{zz} (\xi e^{-\varepsilon g})}{\varepsilon \rho} + \frac{\nablah \rho \cdot \nablah \dz \xi \dz \xi}{\varepsilon^2 \rho^2 e^{\varepsilon g}} \\
     + \frac{\nablah \rho \cdot \nablah \dz \xi \partial_{z} (\xi e^{-\varepsilon g})}{\rho^2}
    + \frac{\varepsilon \dz g \nablah \xi  \cdot \nablah \dz \xi}{\rho e^{\varepsilon g}}  \\
     + \dz(\frac{1}{\rho}) \Delta_h \xi  \dz (\xi e^{-\varepsilon g}) + \frac{1}{\varepsilon^2} \dz(\frac{1}{\rho})\Delta_h \xi \dz \xi e^{-\varepsilon g} } \biggr\rbrace  \,d\vec x \\
     + \int (H_1 + H_2 + \frac{H_3}{\rho} + H_4) \lbrack \varepsilon (\dt \xi  - \frac{e^{\varepsilon g}}{\rho}\Delta (\xi e^{-\varepsilon g})) \rbrack \,d \vec x 
     =: \sum_{j = 5}^{7} J_j.
    \end{gathered}
\end{equation}
Meanwhile, taking the $ L^2 $-inner product of \eqref{eq:linear-aw} with $ \varepsilon^2 \dt \xi $ yields that 
\begin{equation}
    \label{ac-ene-002}
    \begin{gathered}
        \frac{1}{2}\dfrac{d}{dt}\norm{\frac{\varepsilon \dt \xi}{e^{\varepsilon g/2}}, \dfrac{\nablah \xi}{e^{\varepsilon g/2}}, \frac{\dz\xi}{\varepsilon e^{\varepsilon g/2}}}{L^2}^2 
        + \norm{\frac{\varepsilon \nabla \dt \xi}{\rho^{1/2} e^{\varepsilon g/2}}}{L^2}^2 
        = \frac{1}{2} \int \lbrack \dvh (\frac{\vec v}{e^{\varepsilon g}}) + \dz(\frac{w}{ e^{\varepsilon g}}) + 2 \frac{\varepsilon w \dz g}{e^{\varepsilon g}} \rbrack  \vert \varepsilon \dt \xi  \vert^2 \,d\vec x \\
        - \int \varepsilon^2 ( \dvh \vec v + \dz w ) \frac{1}{\rho}\Delta(\xi e^{-\varepsilon g})\dt \xi  \,d\vec x 
        - \int \varepsilon^2 \bigl( \frac{\vec v}{\rho}\cdot\nablah \dt \xi + \frac{w}{\rho} \dz \dt \xi \bigr) \Delta(\xi e^{-\varepsilon g}) \,d\vec x \\
        + \int \biggl( \substack{ \dt(\frac{\varepsilon^2}{\rho}) \Delta(\xi e^{-\varepsilon g}) \dt \xi  - \nabla (\frac{\varepsilon^2}{\rho}) \cdot \nabla \dt(\xi e^{-\varepsilon g}) \dt\xi \\
        + \frac{\varepsilon^3\dz g  \dt \xi \dz \dt \xi}{\rho e^{\varepsilon g}} } \biggr) \,d\vec x 
        + \int (H_1 + H_2 + \frac{H_3}{\rho} + H_4) \varepsilon \dt \xi \,d\vec x =: \sum_{j=8}^{12}J_j.
    \end{gathered}
\end{equation}

For $ \varepsilon \in (0,1) $ small enough, one has that
\begin{equation}
    \label{ac-ene-003}
    \begin{aligned}
        \mathcal E_1(\xi) &:= \norm{\frac{\varepsilon \lbrack \dt \xi - \frac{e^{\varepsilon g}}{\rho}\Delta (\xi e^{-\varepsilon g})\rbrack}{ e^{\varepsilon g/2}}, \frac{\varepsilon \dt \xi}{e^{\varepsilon g/2}}}{L^2}^2 + 2\norm{\frac{\nablah \xi}{e^{\varepsilon g/2}}, \frac{\dz \xi}{\varepsilon e^{\varepsilon g/2}}}{L^2}^2\\
        & \qquad \geq \mathfrak c_1 \norm{\varepsilon \dt \xi, \varepsilon \nabla^2 \xi, \nablah \xi, \frac{\dz \xi}{\varepsilon}}{L^2}^2 - \|\xi\|_{L^2}^2, \\
        \mathcal D_1(\xi) &:= \norm{\frac{\Delta_h \xi}{\rho^{1/2} e^{\varepsilon g/2}},e^{\varepsilon g/2}\frac{\partial_{zz}(\xi e^{-\varepsilon g})}{\varepsilon \rho^{1/2}}, \frac{\nablah \dz \xi}{\varepsilon \rho^{1/2} e^{\varepsilon g/2}},\frac{\varepsilon \nabla \dt \xi}{\rho^{1/2} e^{\varepsilon g/2}}}{L^2}^2 \\
        & \qquad \geq \mathfrak c_1 \norm{\Delta \xi, \frac{\nabla \dz \xi}{\varepsilon},\varepsilon \nabla \dt \xi}{L^2}^2- \|\xi\|_{L^2}^2,
    \end{aligned}
\end{equation}
where the subtraction of $ \|\xi\|_{L^2}^2$ comes from when the $z$ derivatives hit on $e^{-\varepsilon g}$ in $\Delta(\xi e^{-\varepsilon g})$ and $\partial_{zz} (\xi e^{-\varepsilon g})$.
Now we are ready to estimate $ J_j $, $ j = 5,6, \cdots, 12 $. 
Applying H\"older's inequality and the Sobolev embedding inequality yields that
\begin{equation}
    \label{ac-ene-004}
    \begin{aligned}
        J_5 + J_{8} + J_{9} & \lesssim (\norm{
            \vec v, \nablah \vec v, w, \dz w}{L^\infty}^2 + 1) \mathcal E_1,\\
        J_6 + J_{10} + J_{11} & \lesssim (\norm{\varepsilon \dt q,\nabla q,\vec v, w}{L^\infty} + 1)(\mathcal E_1^{1/2}+ \|\xi\|_{L^2}) (\mathcal D_1^{1/2} + \|\xi\|_{L^2}).
    \end{aligned}
\end{equation}
Meanwhile, one has that
\begin{equation}
    \label{ac-ene-005}
        J_7 + J_{12} \lesssim \norm{H_1,H_2,H_3,H_4}{L^2} \mathcal E_1^{1/2}.
\end{equation}
It remains to estimate the $ H_j $'s. From \eqref{apndx-eq:004}, one can calculate that
\begin{equation}
    \label{ac-ene-006}
    \begin{aligned}
        \norm{H_1}{L^2} & \lesssim (\norm{\varepsilon \dt \vec v, \varepsilon^2 \dt w,\varepsilon w}{L^\infty} + 1) (\mathcal E_1^{1/2}+ \|\xi\|_{L^2})  + \norm{\varepsilon \dt G_1}{L^2}.
    \end{aligned}
\end{equation}
Similarly, from \eqref{apndx-eq-006}--\eqref{eq:linear-aw}, one can calculate that
\begin{align}
    \label{ac-ene-007}
    & \begin{aligned}
        \norm{H_2}{L^2} & \lesssim (\norm{\vec v,{\nabla} \vec v, w, \dz w, \varepsilon \nabla_h w, \nabla q}{L^\infty} +1) \norm{\nabla^2 V,  V, \nabla W, \nabla \dz W, \varepsilon \nabla^2 W}{L^2} \\
        & \qquad + \norm{\nabla w}{L^\infty} \norm{V,\nabla V}{L^2}
        + \norm{\nablah G_2, \frac{\dz G_3}{\varepsilon},G_3}{L^2},
    \end{aligned}\\
    \label{ac-ene-008}
    & 
    \begin{aligned}
        \norm{H_3}{L^2} & \lesssim \norm{ \nabla \vec v, \vec v,\varepsilon \nabla w, \varepsilon w}{L^\infty} (\mathcal E_1^{1/2}+ \mathcal D_1^{1/2}  + \|\xi\|_{L^2}) 
        \\
        &\qquad  + \|\varepsilon \nabla^2 w,\nabla^2 \vec v\|_{L^\infty} (\mathcal E_1^{1/2}+ \|\xi\|_{L^2}) + \norm{\varepsilon \nabla^2 G_1, \varepsilon G_1}{L^2},
    \end{aligned}
    \\
\intertext{and}
    \label{ac-ene-009}
    & \norm{H_4}{L^2} \lesssim (\norm{\vec v, w, \nabla q,\varepsilon \dt q}{L^\infty}^2 + 1) (\mathcal E_1^{1/2} + \|\xi\|_{L^2}).
\end{align}

Collecting \eqref{ac-ene-001}--\eqref{ac-ene-009} and applying Cauchy-Schwartz's inequality yield the following lemma 
\begin{lemma}
    \label{lm:acoustic-energy-flow}
    Under the {\it a priori} assumption \eqref{ene:a-priori-rho}, for small enough $ \varepsilon \in (0,1) $, any solution $ (\xi, V, W) $ to \eqref{sys:evl-op-shorthand} satisfies
\begin{equation}
    \label{ac-ene-010}
    \begin{gathered}
        \dfrac{d}{dt} \mathcal E_1(\xi) + \mathcal D_1(\xi) \lesssim \mathcal H(\norm{\varepsilon \dt q, \nabla q,  \vec v, \nabla \vec v, w, \dz w,\varepsilon \nabla w}{L^\infty})(\mathcal E_1 + \|\xi\|_{L^2}^2)\\
        + \norm{\varepsilon \dt \vec v, \varepsilon^2 \dt w}{L^\infty} (\mathcal E_1 + \|\xi\|_{L^2}^2)\\
        + \mathcal H(\norm{\vec v, \nabla \vec v, w,\dz w, \varepsilon \nabla w,\nabla q}{L^\infty})\norm{V,\nabla^2 V, \nabla W, \varepsilon \nabla^2 W, \nabla \dz W}{L^2}\mathcal E_1^{1/2}\\
        + \norm{\nabla w}{L^\infty}\norm{V,\nabla V}{L^2}\mathcal E_1^{1/2} +\|\varepsilon \nabla^2 w,\nabla^2 \vec v\|_{L^\infty} (\mathcal E_1+ \|\xi\|^2_{L^2})\\
        + \norm{\varepsilon \dt G_1,\varepsilon \nabla^2 G_1,\varepsilon G_1,\nablah G_2, \frac{\dz G_3}{\varepsilon},G_3}{L^2} \mathcal E_1^{1/2}.
    \end{gathered}
\end{equation}
\end{lemma}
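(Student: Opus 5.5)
The lemma is obtained by adding the two acoustic energy identities \eqref{ac-ene-001} and \eqref{ac-ene-002}. By the definitions in \eqref{ac-ene-003}, the sum of their left-hand sides is exactly $\frac12\frac{d}{dt}\mathcal E_1(\xi)$ plus dissipation terms that dominate $\mathcal D_1(\xi)$ up to harmless constants. The factor $(\varepsilon^2+1)$ in front of $\|\nablah\dz\xi/(\varepsilon\rho^{1/2}e^{\varepsilon g/2})\|^2$ is at least $1$, so it only helps. The right-hand side is $\sum_{j=5}^{12}J_j$, and the task is to bound each $J_j$ by the corresponding group of terms in \eqref{ac-ene-010}. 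Any contribution carrying a factor $\mathcal D_1^{1/2}$ is absorbed into the left-hand side by Cauchy--Schwarz, using that $\mathcal D_1$ controls $\|\Delta\xi,\nabla\dz\xi/\varepsilon,\varepsilon\nabla\dt\xi\|_{L^2}^2$ modulo $\|\xi\|_{L^2}^2$. Throughout, the a priori bound \eqref{ene:a-priori-rho} lets me bound $\rho^{\pm1}$ and $e^{\pm\varepsilon g}$ uniformly. Since $\rho=e^{\varepsilon(q-g)}$, I also have $\nabla\rho=\rho\,\varepsilon\nabla(q-g)$ and $\dt\rho=\rho\,\varepsilon\dt q$. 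Combined with $\dz g=\mathcal O(\varepsilon)$, this gives $\|\nabla\rho/\varepsilon\|_{L^\infty}\lesssim\|\nabla q\|_{L^\infty}+1$.

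The key steps, in order, are as follows.

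First, for $J_5,J_8,J_9$: these are transport commutators of the form $\int(\mathrm{div}\text{-type coefficient})\,|\cdot|^2$. They give \eqref{ac-ene-004}, whose first line is bounded by $\|\vec v,\nablah\vec v,w,\dz w\|_{L^\infty}\mathcal E_1$. The term with $\varepsilon w\dz g$ is $\mathcal O(\varepsilon^2)\|w\|_{L^\infty}$.

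Second, for $J_6,J_{10},J_{11}$: I check the dangerous $\varepsilon^{-2}$ weights. For example, $\frac{1}{\varepsilon^2}\nablah\rho\cdot\nablah\dz\xi\,\dz\xi$ is rewritten as $(\nablah\rho/\varepsilon)\cdot(\nablah\dz\xi/\varepsilon)$ times $\dz\xi$. The first factor is $\lesssim\|\nabla q\|_{L^\infty}$, the second is controlled by $\mathcal D_1^{1/2}$, and the third by $\varepsilon\mathcal E_1^{1/2}$. Similarly, $\frac1{\varepsilon^2}\dz(1/\rho)\Delta_h\xi\,\dz\xi$ is handled through $\dz\rho/\varepsilon$ and $\dz\xi/\varepsilon$. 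The terms with $\dt\rho$ use $\varepsilon\dt q$. This yields the second line of \eqref{ac-ene-004}, and Young's inequality absorbs $\mathcal D_1$.

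Third, $J_7+J_{12}\le\|H_1,\dots,H_4\|_{L^2}\mathcal E_1^{1/2}$, because both $\varepsilon(\dt\xi-\cdots)$ and $\varepsilon\dt\xi$ are part of $\mathcal E_1$. This reduces the problem to the bounds \eqref{ac-ene-006}--\eqref{ac-ene-009}. I verify each by expanding the definitions in \eqref{apndx-eq:004}, \eqref{apndx-eq-006}, \eqref{apndx-eq:007}, and \eqref{eq:linear-aw}, and multiplying by $\varepsilon$:
\begin{itemize}
\item In $H_1$, the term $\varepsilon\dt\vec v\cdot\nablah\xi_\varepsilon$ is fine. For $\varepsilon\dt w\dz\xi_\varepsilon$, I write it as $(\varepsilon^2\dt w)(\dz\xi_\varepsilon/\varepsilon)$.
\item In $H_2$, the worst piece is $\dz(\varepsilon^{-1}G_3)$ multiplied by $\varepsilon$, which gives $\|\dz G_3/\varepsilon\|_{L^2}$ after tracking the weights. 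The terms $\varepsilon\dvh(\vec v\cdot\nablah V_\varepsilon)$ and $\varepsilon\dz(w\dz W_\varepsilon)$ produce the $L^2$ norms of $\nabla^2V$, $\nabla\dz W$, and $\varepsilon\nabla^2 W$ listed.
\item In $H_3$, the product-rule terms carry one explicit $\varepsilon$, which pairs with $\nabla^2\xi$ or $\Delta\xi$ and lands in $\mathcal E_1^{1/2}$ or $\mathcal D_1^{1/2}$.
\end{itemize}

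The main obstacle is the bookkeeping of negative powers of $\varepsilon$. Each $\varepsilon^{-1}$ or $\varepsilon^{-2}$ must be matched either with a derivative of $\rho$ (worth $\varepsilon\nabla q$ or $\varepsilon\dt q$), with $\dz g=\mathcal O(\varepsilon)$, or with a $\dz\xi$ already weighted by $1/\varepsilon$ in $\mathcal E_1$ or $\mathcal D_1$. A further subtlety is the mismatch between $\dz(e^{-\varepsilon g}\dz\xi)$ and $\partial_{zz}(\xi e^{-\varepsilon g})$. I handle it through the first term of $J_6$, using $\dz g=\mathcal O(\varepsilon)$ to cancel the $1/\varepsilon$, which is where the $-\|\xi\|_{L^2}^2$ corrections in \eqref{ac-ene-003} arise. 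After collecting all terms and absorbing $\frac12\mathcal D_1$, the nonlinear coefficients are gathered into a polynomial $\mathcal H$, which gives \eqref{ac-ene-010}.
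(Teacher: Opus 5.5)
Your proposal is correct and is essentially the paper's argument. Summing \eqref{ac-ene-001} and \eqref{ac-ene-002} gives $\frac12\frac{d}{dt}\mathcal E_1(\xi)$ plus a dissipation bounded below by $\mathcal D_1(\xi)$, and the right-hand side is then controlled via \eqref{ac-ene-004}--\eqref{ac-ene-009}, with the $\mathcal D_1^{1/2}$-terms absorbed by Cauchy--Schwarz.

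Two small bookkeeping remarks, neither of which changes your final bounds:
\begin{itemize}
\item $H_2$ carries no extra factor of $\varepsilon$. It contains $\dz(\varepsilon^{-1}G_3e^{-\varepsilon g})$ itself and is paired with $\varepsilon(\dt\xi-\cdots)$. This is exactly why $\|\dz G_3/\varepsilon\|_{L^2}$, $\|\nabla^2 V\|_{L^2}$ and $\|\nabla\dz W\|_{L^2}$ appear without an $\varepsilon$ weight.
\item The $-\|\xi\|_{L^2}^2$ corrections in \eqref{ac-ene-003} come from $z$-derivatives hitting $e^{-\varepsilon g}$ in the coercivity bounds, not from $J_6$.
\end{itemize}
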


\section{Uniform-in-$ \varepsilon $ estimate}
\label{sec:uniform-est}

\subsection{Energy functional and elliptic estimate}
\label{subsec:ell-est}

Let the intermediate energy and dissipation functionals be 
\begin{align}
    \label{def:itm-eng}
    \mathfrak E(t) &:= \norm{q,\vec v, \varepsilon w}{H^3}^2 + \norm{\varepsilon \dt q,\varepsilon \nabla^2 q,\frac{\dz q}{\varepsilon}}{H^2}^2,\\
    \label{def:itm-dis}
    \mathfrak D(t) & := \norm{\nabla \vec v, \varepsilon \nabla w}{H^3}^2 + \norm{\Delta q, \frac{\nabla \dz q}{\varepsilon}, \varepsilon \nabla \dt q}{H^2}^2.
\end{align}
Notice that we have the following equivalence between intermediate energy and dissipation with energy/acoustic energy and dissipation/acoustic dissipation:
\begin{lemma}\label{lm:equivalence}
    There exists a constant $\mathfrak c_3>1$ such that, for $ \varepsilon \in (0,1) $ small enough, 
    \begin{gather}
    \label{ene-fncts-eqvlt}
    \begin{aligned}
        \frac{1}{\mathfrak c_3}\mathcal E_\mrm{total}\leq \mathfrak E 
        \leq \mathfrak c_3 \mathcal E_\mrm{total},
    \end{aligned} \\
    \intertext{and}
    \label{dss-fncts-eqvlt}
    \begin{aligned}
        \frac{1}{\mathfrak c_3}\bigl(\mathcal D_\mrm{total} - \mathfrak E\bigr)  \leq \mathfrak D 
        \leq \mathfrak c_3 \bigl(\mathcal D_\mrm{total}+ \mathfrak E\bigr),
    \end{aligned}  
    \end{gather}
    where 
    \begin{align}
        \mathcal E_\mrm{total}:= &\sum_{j=0}^3\mathcal E_0(\nabla^jq,\nabla^jv,\nabla^jw) + \sum_{j=0}^2\mathcal E_1(\nabla^j q), \label{def:total-energy}
        \\
        \mathcal D_\mrm{total}:=&\sum_{j=0}^3\mathcal D_0(\nabla^jv,\nabla^jw) + \sum_{j=0}^2\mathcal D_1(\nabla^j q). \label{def:total-dissipation}
    \end{align}
    Here $ \mathcal E_0, \mathcal D_0, \mathcal E_1, \mathcal D_1 $ are given in \eqref{ene-002} and \eqref{ac-ene-003}.
\end{lemma}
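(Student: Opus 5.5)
We compare the two functionals term by term, using only the pointwise bounds $e^{\pm\varepsilon g}\in[e^{-\varepsilon\|g\|_{L^\infty}},e^{\varepsilon\|g\|_{L^\infty}}]$, which are uniformly close to $1$, and the a priori density bound \eqref{ene:a-priori-rho}. The coercivity statements \eqref{ene-002} and \eqref{ac-ene-003} already give lower bounds of each piece. The energy equivalence \eqref{ene-fncts-eqvlt} then proceeds in two directions.

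\emph{Upper bound for $\mathcal E_\mrm{total}$.} Since $\varepsilon g$, $\varepsilon\dz g$, $\varepsilon\partial_{zz}g$ are uniformly bounded, we have
\[
\sum_{j=0}^3\mathcal E_0(\nabla^j q,\nabla^j\vec v,\nabla^j w)\simeq\norm{q,\vec v,\varepsilon w}{H^3}^2 .
\]
For $\mathcal E_1(\nabla^j q)$ we expand
\[
\frac{e^{\varepsilon g}}{\rho}\Delta(\nabla^j q\, e^{-\varepsilon g})=\frac1\rho\bigl(\Delta\nabla^jq-2\varepsilon\dz g\,\dz\nabla^jq+((\varepsilon\dz g)^2-\varepsilon\partial_{zz}g)\nabla^jq\bigr).
\]
This gives $\mathcal E_1(\nabla^jq)\lesssim \norm{\varepsilon\dt\nabla^jq,\varepsilon\nabla^2\nabla^jq,\nablah\nabla^jq,\dz\nabla^jq/\varepsilon}{L^2}^2+\norm{\nabla^jq}{H^1}^2$. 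Summing over $j\le2$, every term is bounded by $\mathfrak E$, with $\norm{\nabla^jq}{H^1}$ absorbed by $\norm{q}{H^3}$.

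\emph{Lower bound for $\mathcal E_\mrm{total}$.} The inequality \eqref{ac-ene-003} gives $\mathcal E_1(\nabla^jq)\ge\mathfrak c_1\norm{\varepsilon\dt\nabla^jq,\varepsilon\nabla^2\nabla^jq,\dz\nabla^jq/\varepsilon}{L^2}^2-\norm{\nabla^jq}{L^2}^2$. The subtracted term is controlled by $\mathcal E_0(\nabla^jq,\cdot,\cdot)$, so a combination with large coefficient on the $\mathcal E_0$ part dominates $\mathfrak E$. One point needs care. Proving \eqref{ac-ene-003} itself requires the elliptic bound $\norm{\varepsilon\nabla^2\xi}{L^2}\lesssim\norm{\varepsilon\Delta\xi}{L^2}$ on the periodic domain (via Fourier), plus the triangle inequality between $\varepsilon\dt\xi$ and $\varepsilon(\dt\xi-\rho^{-1}e^{\varepsilon g}\Delta(\xi e^{-\varepsilon g}))$. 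Because $\rho^{-1}$ is variable, we first write $\rho\,e^{\varepsilon g}\cdot(\ldots)$ to isolate $\Delta\xi$, using $\rho\le2\overline\rho$. If the stated constants in $\mathcal E_\mrm{total}$ (all weights $1$) do not suffice, we absorb $\norm{\nabla^jq}{L^2}^2$ using the $H^3$ norm of $q$ by interpolation, namely
\[
\norm{\nabla^jq}{L^2}^2\le \delta\norm{\nabla^{j+1}q}{L^2}^2+C_\delta\norm{q}{L^2}^2 ,
\]
and bound the top-order piece by $\mathcal E_0$ with one more derivative.

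\emph{Dissipation.} For \eqref{dss-fncts-eqvlt} the structure is identical. $\mathcal D_0(\nabla^j\vec v,\nabla^jw)\simeq\norm{\nabla\nabla^j\vec v,\varepsilon\nabla\nabla^jw}{L^2}^2$ directly by \eqref{ene-002} and \eqref{ene:a-priori-rho}. For $\mathcal D_1(\nabla^jq)$, expanding $\partial_{zz}(\nabla^jq\,e^{-\varepsilon g})/\varepsilon=e^{-\varepsilon g}\bigl(\partial_{zz}\nabla^jq/\varepsilon-2\dz g\,\dz\nabla^jq+\ldots\bigr)$ produces lower-order errors. These errors are $\dz g\,\dz\nabla^jq$ and $(\varepsilon(\dz g)^2-\partial_{zz}g)\nabla^jq$. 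Since $\dz g,\partial_{zz}g=\mathcal O(\varepsilon)$, both are bounded by $\norm{q}{H^3}^2\le\mathfrak E$. Similarly, $\norm{\Delta\nabla^jq}{L^2}$ controls $\nabla^2\nabla^jq$ up to lower order. Hence
\[
\mathcal D_\mrm{total}-C\mathfrak E\lesssim\mathfrak D\lesssim\mathcal D_\mrm{total}+C\mathfrak E ,
\]
which is \eqref{dss-fncts-eqvlt} after enlarging $\mathfrak c_3$.

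The main obstacle is the mixed term $\norm{\nabla\dz q/\varepsilon}{H^2}$. On the $\mathcal D_1$ side it arises from both $\nablah\dz\xi/\varepsilon$ and $\partial_{zz}(\xi e^{-\varepsilon g})/\varepsilon$. The commutator $\dz g\,\dz\xi$ must carry no $1/\varepsilon$, and this is exactly where the hypothesis $\dz g=\mathcal O(\varepsilon)$ is used. Everything else is bookkeeping of exponential weights, each equal to $1+\mathcal O(\varepsilon)$.
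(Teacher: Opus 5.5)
Your argument is correct and is essentially the paper's own (implicit) proof. The lemma is stated right after the coercivity bounds \eqref{ene-002} and \eqref{ac-ene-003} and follows from them by the term-by-term comparison you carry out. The subtracted $\|\nabla^j q\|_{L^2}^2$ is absorbed by $\mathcal E_0(\nabla^j q,\cdot,\cdot)$ at the same index $j$, so equal weights suffice and your interpolation fallback is unnecessary.

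One correction: the hypothesis $\partial_z g=\mathcal O(\varepsilon)$ is not actually used here. Each $z$-derivative falling on $e^{-\varepsilon g}$ already produces a factor $\varepsilon$, so the commutator in $\varepsilon^{-1}\partial_{zz}(\xi e^{-\varepsilon g})$ is $-2e^{-\varepsilon g}\partial_z g\,\partial_z\xi$, with no $1/\varepsilon$, for any smooth $g$.
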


Then we have the following elliptic estimates:
\begin{proposition}\label{prop:elliptic-est}
For $ \varepsilon \in (0,1) $ small enough, one has that
    \begin{gather}
        \label{ellest:w}
        \norm{w, \dz w}{H^2} \lesssim \mathcal H(\mathfrak E(t)), \\
        \label{ellest:w-d}
        \norm{w,\dz w}{H^3} \lesssim \mathcal H(\mathfrak E(t))\mathfrak D(t)^{1/2} + \mathcal H(\mathfrak E(t))
        \\
        \label{ellest:dt-v-w}
        \norm{\varepsilon^2 \dt w,\varepsilon \dt \vec v}{H^2}\lesssim \mathcal H(\mathfrak E(t)) \mathfrak D(t)^{1/2} + \mathcal H(\mathfrak E(t)).
    \end{gather}
\end{proposition}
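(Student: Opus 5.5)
We propose to prove the three estimates in turn, with the continuity relation \eqref{hbsym-continuity} used as an equation for $\dz w$ and the representation \eqref{eq:w} used to recover $w$ itself. For \eqref{ellest:w} and \eqref{ellest:w-d}, write \eqref{hbsym-continuity} as
\begin{equation*}
\dz w = -\dvh \vec v - \varepsilon\dt q - \varepsilon \vec v\cdot\nablah q - \varepsilon w\dz q + \varepsilon w \dz g .
\end{equation*}
Since $\dz w$ is even and $w$ is odd in $z$, with $w|_{z=0}=0$, the Poincaré inequality in $z$ (or \eqref{eq:w} directly) gives $\norm{w}{H^s}\lesssim \norm{\dz w}{H^s}$ for $s\le 3$. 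It therefore suffices to bound $\dz w$.

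In $H^2$, the linear terms are controlled by $\norm{\nablah\vec v}{H^2}+\norm{\varepsilon\dt q}{H^2}\lesssim \mathfrak E^{1/2}$. For the term $\varepsilon w\dz q$ we use the gain $\dz q = \varepsilon (\dz q/\varepsilon)$, so that $\norm{\varepsilon w \dz q}{H^2}\lesssim \varepsilon^2 \norm{w}{H^2}\norm{\dz q/\varepsilon}{H^2}\lesssim \varepsilon^2\mathfrak E^{1/2}\norm{\dz w}{H^2}$. The term $\varepsilon w\dz g$ is $\mathcal O(\varepsilon^2)\norm{w}{H^2}$. Both can be absorbed into the left-hand side for $\varepsilon$ small. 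This absorption requires a priori finiteness of $\norm{w}{H^2}$, which holds for each fixed $\varepsilon$ by the local theory, or alternatively via $\norm{\varepsilon w}{H^3}\le \mathfrak E^{1/2}$. Note that the smallness threshold depends on $\mathfrak E$, so I would instead use $\varepsilon\norm{w}{H^2}\le\mathfrak E^{1/2}$ and bound $\norm{\varepsilon w\dz q}{H^2}\lesssim \norm{\varepsilon w}{H^2}\norm{\dz q}{H^2}\lesssim \mathfrak E$, which avoids absorption entirely. The term $\varepsilon\vec v\cdot\nablah q$ is bounded by $\mathfrak E$ using the algebra property of $H^2$. This yields \eqref{ellest:w} with $\mathcal H(r)=C(r^{1/2}+r)$.

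For \eqref{ellest:w-d} we repeat the argument in $H^3$. The dissipative quantities enter as follows: $\norm{\nablah\vec v}{H^3}\le\mathfrak D^{1/2}$ and $\norm{\varepsilon\dt q}{H^3}\lesssim \norm{\varepsilon\dt q}{L^2}+\norm{\varepsilon\nabla\dt q}{H^2}\lesssim \mathfrak E^{1/2}+\mathfrak D^{1/2}$. For the products, $\norm{\varepsilon\vec v\cdot\nablah q}{H^3}\lesssim \norm{\vec v}{H^3}\norm{q}{H^3}+\norm{\vec v}{L^\infty}\norm{\varepsilon\nabla^2 q}{H^2}$ (using $\nabla^3\nablah q$ with one $\varepsilon$), and for $\varepsilon w\dz q$ we use $\norm{\nabla^3\dz q}{L^2}\le \varepsilon\norm{\nabla\dz q/\varepsilon}{H^2}\le\varepsilon\mathfrak D^{1/2}$ together with \eqref{ellest:w} for the factor $w$ in $H^2$. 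The only top-order term in $w$ is $\varepsilon\norm{w}{H^3}\norm{\dz q}{L^\infty}\lesssim \mathfrak E^{1/2}\cdot\varepsilon\norm{\dz q/\varepsilon}{H^2}$, which is either absorbed (since it carries a factor $\varepsilon$) or simply bounded by $\mathfrak E$ via $\norm{\varepsilon w}{H^3}$.

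For \eqref{ellest:dt-v-w} we solve the momentum equations for the time derivatives:
\begin{equation*}
\varepsilon\dt\vec v = -\varepsilon(\vec v\cdot\nablah\vec v+w\dz\vec v) - \nablah q + \frac{\varepsilon}{\rho}\Delta\vec v, \qquad \varepsilon^2\dt w = -\varepsilon^2(\vec v\cdot\nablah w + w\dz w) - \frac{\dz q}{\varepsilon} + \frac{\varepsilon^2}{\rho}\Delta w.
\end{equation*}
The right-hand sides are bounded in $H^2$ by $\norm{q}{H^3}+\norm{\dz q/\varepsilon}{H^2}+\norm{\nabla\vec v,\varepsilon\nabla w}{H^3}$ times $\mathcal H(\norm{\rho^{-1}}{H^2})$, plus products controlled through \eqref{ellest:w}. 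Here $\rho=e^{\varepsilon(q-g)}$ is bounded in $H^3$ by $\mathcal H(\mathfrak E)$.

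The main obstacle is the bookkeeping of powers of $\varepsilon$ in the top-order terms, in particular making sure every occurrence of $w$ at the $H^3$ level is paired with either an $\varepsilon$ or a $\dz q=\mathcal O(\varepsilon)$ factor, so that no uncontrolled $\norm{w}{H^3}$ appears. I expect the pairing with $\dz q/\varepsilon$ to resolve this.
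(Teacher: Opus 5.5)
Your proof is correct and is essentially the paper's argument. As in the paper, you recover $w$ from the continuity equation by vertical integration, using $w|_{z=0}=0$ from \eqref{SYM}, and you read off $\varepsilon\dt\vec v$ and $\varepsilon^2\dt w$ from the momentum equations.

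The one difference is cosmetic. You work with the differentiated symmetric form \eqref{hbsym-continuity}, so $w$ reappears on the right in $\varepsilon w\dz q$ and $\varepsilon w\dz g$. You control these terms through the $\norm{\varepsilon w}{H^3}\le\mathfrak E^{1/2}$ part of the energy, which correctly avoids an $\mathfrak E$-dependent absorption. The paper instead uses the integrated conservative formula \eqref{eq:w}, whose $\rho$-weight removes $w$ from the right-hand side.
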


\begin{proof}
    From \eqref{eq:w}, one can directly apply Minkowski's inequality, H\"older's inequality, and the Sobolev embedding inequality to obtain
    \begin{equation}
        \label{ellest:001}
        \begin{gathered}
        \norm{w, \dz w}{L^2} \lesssim (1+ \norm{\vec v}{L^\infty}^2 + \|\varepsilon\partial_z q\|_{L^\infty}^2)\norm{\varepsilon \dt q, \varepsilon  \nablah q, \dvh \vec v}{L^2}  \\
        \lesssim (1 + \norm{\vec v,  q}{H^3}^2) \norm{\varepsilon \dt q, \varepsilon  \nablah q, \dvh \vec v}{L^2} \lesssim \mathfrak E(t)^{3/2} + 1.
    \end{gathered}
    \end{equation}
    Similarly, with tedious but straightforward calculation, one can obtain \eqref{ellest:w} and \eqref{ellest:w-d}. 
    
    \smallskip 

    On the other hand, from \eqref{hbsym-h-momentum} and \eqref{hbsym-v-momentum}, one can write down 
    \begin{gather}
        \label{ellest:002}
        \varepsilon \dt \vec v = \frac{\varepsilon}{\rho} \Delta \vec v - \nablah q - \varepsilon(\vec v \cdot \nablah \vec v + w \dz \vec v) , \\
        \label{ellest:003}
        \varepsilon^2 \dt w  
    = \frac{\varepsilon^2}{\rho} \Delta w 
    - \varepsilon^2  (\vec v \cdot \nablah w + w \dz w) - \frac{\dz q}{\varepsilon}. 
    \end{gather}
    \eqref{ellest:dt-v-w} follows directly after applying H\"older's inequality and the Sobolev embedding inequality. 
\end{proof}

\subsection{Energy and acoustic wave estimates}
\label{subsec:ene-est}

We start by writing down the equations of the derivatives in the form of \eqref{sys:evl-op-shorthand}. In fact, one can directly calculate from \eqref{sys:hst-bsnq-sym} that, for $ k =0,1,2,3 $, 
\begin{align}
    \label{eq:ddd-u}
    \mathcal L(t, \rho, \vec v, w,\varepsilon)\begin{pmatrix}
        \partial^k q \\ \partial^k \vec v \\ \partial^k w
    \end{pmatrix} = \begin{pmatrix}
        G_{1,k} \\ G_{2,k} \\ \varepsilon G_{3,k}
    \end{pmatrix}
\end{align}
where $ \mathcal L $ is as in \eqref{sys:evl-op-shorthand}, $ \partial \in \lbrace \partial_x,\partial_y,\partial_z \rbrace $, and
\begin{align}
    \label{def:g-1-ddd}
    & \begin{aligned}
        G_{1,k} &:= \sum_{j=1}^{k} \biggl( \partial^{k-j} w \dz \partial^j g - \partial^j \vec v \cdot\nablah \partial^{k-j} q - \partial^j w \dz \partial^{k-j} q \biggr), 
    \end{aligned}\\
    & \label{def:g-2-ddd}
    \begin{aligned}
        G_{2,k} &:= - \sum_{j=1}^{k} \biggl( \partial^j \vec v \cdot \nablah \partial^{k-j} \vec v +  \partial^j w \dz \partial^{k-j} \vec v \biggr) 
        + \sum_{j=1}^{k} \partial^j (\frac{1}{\rho}) \partial^{k-j}\Delta \vec v,
    \end{aligned}\\
    & \label{def:g-3-ddd}
    \begin{aligned}
        G_{3,k} &:= - \varepsilon \sum_{j = 1}^{k} \biggl(\partial^j \vec v \cdot \nablah \partial^{k-j} w + \partial^j w \dz \partial^{k-j} w \biggr)
        + \sum_{j=1}^k \partial^j (\frac{\varepsilon}{\rho}) \partial^{k-j} \Delta w, 
    \end{aligned}
\end{align}
where we have omitted the combinatorial constants and the multi-indices are suppressed. We also have $ G_{1,0} =G_{2,0} = G_{3,0} = 0 $. 

\smallskip

Now we are ready to establish the uniform energy estimate:
\begin{proposition}[Energy estimate]
    \label{prop:uniform-est}
    We have, for $ k = 0, 1,2,3 $, 
    \begin{equation}
        \label{ene-est:g-s}
        \begin{gathered}
        \norm{G_{1,k},G_{2,k},G_{3,k}}{L^2} \leq \mathcal H(\mathfrak E(t))(1 + \mathfrak D(t)^{1/2}),
    \end{gathered}
    \end{equation}
    and hence 
    \begin{equation}
        \label{ene-est:ene-ddd}
        \begin{gathered}
        \dfrac{d}{dt} \sum_{k=0}^3 \mathcal E_0(\nabla^k q, \nabla^k \vec v, \nabla^k w) + \frac{1}{2} \sum_{k=0}^3 \mathcal D_0(\nabla^k \vec v, \nabla^k w) \\
        \leq \mathcal H(\mathfrak E(t))(1 + \mathfrak D(t)^{1/2}).
        \end{gathered}
    \end{equation}
\end{proposition}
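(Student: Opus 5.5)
The plan has two steps. First, bound the commutator sources $G_{1,k},G_{2,k},G_{3,k}$ term by term in $L^2$, using the definitions of $\mathfrak E$ and $\mathfrak D$ together with the elliptic estimates of Proposition \ref{prop:elliptic-est}. Second, feed these bounds into Lemma \ref{lm:energy-structure} with $(\xi,V,W)=(\partial^k q,\partial^k\vec v,\partial^k w)$. Throughout I use the a priori density bound \eqref{ene:a-priori-rho}; it is itself controlled by $\mathcal H(\mathfrak E)$, since $\rho=e^{\varepsilon(q-g)}$ and $\|q\|_{L^\infty}\lesssim\|q\|_{H^3}$.

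\textbf{Transport terms.} Each term in $G_{1,k}$ and $G_{2,k}$ is a product $\partial^j a\,\partial\,\partial^{k-j}b$ with $1\le j\le k\le 3$. Here $a\in\{\vec v,w\}$ and $b\in\{q,\vec v\}$. For $j\le 1$ I put $\partial^j a$ in $L^\infty$; for $j=2$ I put both factors in $L^4$; for $j=3$ I put $\partial b$ in $L^\infty$. In each case the relevant norms are $\|\vec v\|_{H^3}$, $\|q\|_{H^3}$, $\|w\|_{H^3}$ and $\|\dz w\|_{H^2}$. The only terms needing a full $H^3$ norm of $w$ are those with $j=3$; these are controlled by \eqref{ellest:w-d}, which gives the factor $\mathcal H(\mathfrak E)(1+\mathfrak D^{1/2})$. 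The other terms use \eqref{ellest:w}. The gravity terms $\partial^{k-j}w\,\dz\partial^j g$ are bounded by $\|w\|_{H^2}$, because $g$ is smooth and its $z$-derivatives are $\mathcal O(\varepsilon)$.

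\textbf{Viscous commutators.} The terms $\partial^j(1/\rho)\,\partial^{k-j}\Delta\vec v$ carry a factor $\partial^j(1/\rho)=\mathcal O(\varepsilon)\times$(polynomial in $\partial^{\le j}q$ and $\partial^{\le j} g$). For $j=1$, the factor $\partial^{k-1}\Delta\vec v$ has at most $k+1\le 4$ derivatives, so it lies in $\|\nabla\vec v\|_{H^3}\le\mathfrak D^{1/2}$. For $j\ge2$, the $q$-factor has at most three derivatives and sits in $L^2$ or $L^4$ against $\Delta\vec v$ in $L^\infty$ or $L^4$. The term $G_{3,k}$ is handled the same way, with the extra $\varepsilon$ absorbed: $\varepsilon\,\partial^{k-j}\Delta w$ is controlled by $\|\varepsilon\nabla w\|_{H^3}\le\mathfrak D^{1/2}$. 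The transport part of $G_{3,k}$, namely $\varepsilon\,\partial^j\vec v\cdot\nablah\partial^{k-j}w$, is bounded via $\|w\|_{H^3}$ as before. This proves \eqref{ene-est:g-s}.

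\textbf{Closing the estimate.} In Lemma \ref{lm:energy-structure} I need $\|\vec v,w,\nablah\vec v,\dz w,\nabla q\|_{L^\infty}\lesssim\mathcal H(\mathfrak E)$. This follows from Sobolev embedding and \eqref{ellest:w}. I also need $\mathcal E_0^{1/2}\lesssim\mathfrak E^{1/2}$. Summing over $k=0,\dots,3$ then gives \eqref{ene-est:ene-ddd}.

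The main obstacle is the top order $k=3$ with $j=3$, where $\partial^3 w$ appears with no $\varepsilon$ weight. The energy only controls $\varepsilon w$, so this term must be routed through the acoustic/elliptic bound \eqref{ellest:w-d}. That is exactly why the dissipation $\mathfrak D^{1/2}$ appears linearly on the right-hand side, rather than the whole expression being bounded by $\mathcal H(\mathfrak E)$ alone.
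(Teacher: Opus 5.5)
Your proposal is correct and follows essentially the same route as the paper: a term-by-term H\"older/Sobolev bound on $G_{1,k},G_{2,k},G_{3,k}$ using \eqref{ellest:w} and \eqref{ellest:w-d}, followed by Lemma \ref{lm:energy-structure} applied to $(\partial^k q,\partial^k\vec v,\partial^k w)$. One small correction: \eqref{ellest:w} only gives $w,\dz w\in H^2$, so the $j=1$ terms (which need $\|\nablah w\|_{L^\infty}$) and the $j=2$ terms (which need $\|\partial^2 w\|_{L^4}$) also require \eqref{ellest:w-d}, not only the $j=3$ terms. This does not affect the conclusion, because each product contains at most one such factor, so the bound stays linear in $\mathfrak D^{1/2}$.
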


\begin{proof}
    We only show the high order terms, while the low order terms can be handled similarly. Directly, one can calculate that, from \eqref{def:g-1-ddd}, \eqref{def:g-2-ddd}, and \eqref{def:g-3-ddd} with $ k = 3 $,
    \begin{gather*}
        \norm{G_{1,3},G_{2,3},G_{3,3}}{L^2} \lesssim \norm{\partial^2 w}{L^2} + \norm{\partial^3 \vec v}{L^2} \norm{\nablah q, \nablah \vec v,\varepsilon \nablah w}{L^\infty}\\
        + \norm{\partial \vec v}{L^\infty} \norm{\nablah \partial^2 q, \nablah \partial^2 \vec v,\varepsilon \nablah \partial^2 w}{L^2} + \norm{\partial w}{L^\infty}\norm{\dz \partial^2 q, \dz \partial^2 \vec v,\varepsilon \dz\partial^2 w}{L^2} \\
        + \norm{\partial^3 w}{L^2}\norm{\dz q, \dz \vec v,\varepsilon \dz w}{L^\infty} + \norm{\partial^3 q, \partial^3 g}{L^2} \norm{\Delta \vec v,\varepsilon \Delta w}{L^\infty} \\ + \norm{\partial q, \partial g}{L^\infty} \norm{\partial^2 \Delta \vec v,\varepsilon \partial^2 \Delta w}{L^2} + \mrm{l.o.t}
        \lesssim \mathcal H(\mathfrak E(t))(1 + \mathfrak D(t)^{1/2}),
    \end{gather*}
    where we have used the Sobolev embedding inequality and \eqref{ellest:w-d}.

    To obtain \eqref{ene-est:ene-ddd}, one only need to apply Lemma \ref{lm:energy-structure} with $ (\xi, V, W) = (\partial^k q, \partial^k \vec v, \partial^k w) $ and $ (G_{1},G_{2},G_{3}) = (G_{1,k},G_{2,k},G_{3,k}) $, $ k = 0,1,2,3 $, and applying the Sobolev embedding inequality.

\end{proof}

Meanwhile, we have the uniform acoustic wave estimate:
\begin{proposition}[Acoustic wave energy estimate]
    \label{prop:uniform-aw}We have, for $ k = 0,1,2$, 
    \begin{equation}
        \label{ene-est:g-s-aw}
        \norm{\varepsilon \dt G_{1,k},\varepsilon \nabla^2 G_{1,k},\varepsilon G_{1,k},\nabla G_{2,k},\frac{\dz G_{3,k}}{\varepsilon},G_{3,k}}{L^2} \lesssim H(\mathfrak E(t)) (\mathfrak D(t)^{1/2} + 1),
    \end{equation}
    and hence
    \begin{equation}
        \label{ene-est:ene-ddd-aw}
        \dfrac{d}{dt} \sum_{k=0}^{2}\mathcal E_1(\nabla^k q) + \sum_{k=0}^{2} \mathcal D_1(\nabla^k q) \leq H(\mathfrak E(t)) (\mathfrak D(t)^{1/2} + 1)
    \end{equation}
\end{proposition}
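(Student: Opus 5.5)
The plan mirrors the proof of Proposition \ref{prop:uniform-est}. First I establish the source bound \eqref{ene-est:g-s-aw} term by term from the explicit formulas \eqref{def:g-1-ddd}--\eqref{def:g-3-ddd} with $k\le 2$. Then I apply Lemma \ref{lm:acoustic-energy-flow} with $(\xi,V,W)=(\partial^k q,\partial^k\vec v,\partial^k w)$ and $(G_1,G_2,G_3)=(G_{1,k},G_{2,k},G_{3,k})$, and sum over $k=0,1,2$.

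In Lemma \ref{lm:acoustic-energy-flow}, the $L^\infty$ coefficients $\norm{\varepsilon\dt q,\nabla q,\vec v,\nabla\vec v,w,\dz w,\varepsilon\nabla w}{L^\infty}$ are bounded by $\mathcal H(\mathfrak E)$. This uses Sobolev embedding, \eqref{def:itm-eng} and \eqref{ellest:w}. The remaining coefficients are handled as follows:
\begin{itemize}
\item $\norm{\varepsilon\dt\vec v,\varepsilon^2\dt w}{L^\infty}$ is bounded by $\mathcal H(\mathfrak E)(1+\mathfrak D^{1/2})$ via \eqref{ellest:dt-v-w}.
\item $\norm{\nabla^2\vec v,\varepsilon\nabla^2 w}{L^\infty}$ is bounded by $\mathfrak D^{1/2}$, since $\nabla\vec v,\varepsilon\nabla w\in H^3$.
\item $\norm{\nabla w}{L^\infty}$ is bounded by $\mathcal H(\mathfrak E)$ through \eqref{ellest:w}.
\end{itemize}
Since $k\le 2$, the terms $\norm{V,\nabla^2V,\nabla W,\varepsilon\nabla^2W,\nabla\dz W}{L^2}$ involve at most $\nabla^4\vec v$, $\varepsilon\nabla^4 w$ and $\nabla^3 w,\nabla^2\dz w$. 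These are controlled by $\mathfrak D^{1/2}$ and by \eqref{ellest:w-d}. The quantity $\mathcal E_1+\|\xi\|_{L^2}^2$ is bounded by $\mathfrak E$ (Lemma \ref{lm:equivalence}). So every term except the $G$-term is at most $\mathcal H(\mathfrak E)(1+\mathfrak D^{1/2})$.

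For \eqref{ene-est:g-s-aw}, the terms $\nabla G_{2,k}$, $G_{3,k}$, $\varepsilon G_{1,k}$ and $\varepsilon\nabla^2 G_{1,k}$ have at most four derivatives in total, and I estimate them as in Proposition \ref{prop:uniform-est}. The highest-order pieces, such as $\partial(1/\rho)\nabla\partial\Delta\vec v$, cost one factor of $\mathfrak D^{1/2}$. Here $\partial(1/\rho)=-\varepsilon\partial(q-g)/\rho$, so factors of $\varepsilon$ from $\rho=e^{\varepsilon(q-g)}$ are available. Terms like $\varepsilon\partial^2 w\,\dz\partial^2 q$ use $\varepsilon\nabla^2 q\in H^2$ and $w\in H^3$ from \eqref{ellest:w-d}. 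Terms involving $\dz\partial^j g$ are harmless because $\dz g=\mathcal O(\varepsilon)$.

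The term $\dz G_{3,k}/\varepsilon$ is fine because $G_{3,k}$ carries an explicit $\varepsilon$: the $\dz$ lands on $\varepsilon w$ terms or on $\partial^j(\varepsilon/\rho)\partial^{k-j}\Delta w$, producing $\partial^{j}(1/\rho)\dz\partial^{k-j}\Delta w$. These are bounded by \eqref{ellest:w-d} since $\dz w\in H^3$, and the extra $\varepsilon$ in $\partial(1/\rho)$ absorbs any bare $\Delta w$ factor.

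The main obstacle is $\varepsilon\dt G_{1,k}$. Time derivatives fall on $w$, $\vec v$ and $q$. I rewrite $\varepsilon\dt\partial^j\vec v$ and $\varepsilon\dt\partial^j w$ via \eqref{ellest:002}, \eqref{ellest:003} and \eqref{eq:w}. For $\varepsilon\dt w$, I differentiate \eqref{eq:w} in time. This produces $\varepsilon^2\dt^2 q$, which I replace using \eqref{hbsym-continuity}; that gives $\varepsilon\dt(\dvh\vec v+\dz w)$ plus lower-order terms. This looks circular, so instead I pair $\dt\partial^j w$ with $\dz\partial^{k-j}q=\varepsilon\cdot(\dz q/\varepsilon)$. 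The extra $\varepsilon$ then makes $\varepsilon^2\dt\partial^j w$ appear, and that is bounded by \eqref{ellest:dt-v-w}. Similarly, $\varepsilon\dt\partial^j\vec v$ is bounded by \eqref{ellest:dt-v-w}, and $\varepsilon\dt\partial^{k-j}\nablah q$ by $\varepsilon\nabla\dt q\in H^2\subset\mathfrak D$. For $\varepsilon\dt\partial^{k-j}w\,\dz\partial^jg$, the bound $\dz g=\mathcal O(\varepsilon)$ reduces it to $\varepsilon^2\dt w$.

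Collecting these bounds gives \eqref{ene-est:g-s-aw}. Inserting everything into \eqref{ac-ene-010} and summing over $k$ yields \eqref{ene-est:ene-ddd-aw}.
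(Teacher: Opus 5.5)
Your proposal is correct and follows essentially the same route as the paper: you bound the commutator sources term by term, and you treat the critical piece $\varepsilon\dt G_{1,k}$ exactly as the paper does, pairing $\dt\partial^j w$ with $\dz\partial^{k-j}q=\varepsilon\,\partial^{k-j}(\dz q/\varepsilon)$ or with $\dz\partial^j g=\mathcal O(\varepsilon)$ so that only $\varepsilon^2\dt w$ and $\varepsilon\dt\vec v$ appear (both controlled by \eqref{ellest:dt-v-w}), and you then feed everything into Lemma~\ref{lm:acoustic-energy-flow} with $(\xi,V,W)=(\partial^k q,\partial^k\vec v,\partial^k w)$. One minor slip: $\norm{\nablah w}{L^\infty}$ is not controlled by \eqref{ellest:w} alone, but \eqref{ellest:w-d} bounds it by $\mathcal H(\mathfrak E)(1+\mathfrak D^{1/2})$, which is still admissible because it multiplies $\norm{V,\nabla V}{L^2}\mathcal E_1^{1/2}\lesssim\mathfrak E$.
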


\begin{proof}
    We focus on the high order estimates, while the low order estimates will follow similarly. 

\medskip
    
    {\par\noindent\bf Estimate of $ \varepsilon \dt G_{1,2} $:} With some tedious but straightforward calculation, one can write that
    \begin{equation}
    \label{ac-ene-est:000}
    \begin{gathered}
        \varepsilon \dt G_{1,2} = 
        \varepsilon \biggl( \substack{\partial \dt w \dz \partial g + \dt w \dz \partial^2 g - \partial \dt \vec v \cdot \nablah \partial q \\ - \partial^2 \dt \vec v \cdot \nablah q - \partial \dt w \dz \partial q - \partial^2 \dt w \dz q 
        } \biggr)
        + \varepsilon \biggl( \substack{- \partial \vec v \cdot \nablah \partial \dt q - \partial^2 \vec v \cdot \nablah \dt q \\ - \partial w \dz \partial \dt q - \partial^2 w \dz \dt q 
        } \biggr) =: I_1 + I_2.
    \end{gathered}
\end{equation}
    To estimate $ I_1 $, using the assumption that $ \dz \partial g = \mathcal O (\varepsilon) $ and $ \dz \partial^2 g = \mathcal O(\varepsilon) $, one has 
    \begin{equation}
        \label{ac-ene-est:001}
        \begin{aligned}
            \norm{I_1}{L^2} & \lesssim (\norm{\nabla q,\frac{\dz q}{\varepsilon}}{L^\infty} + 1) \norm{\substack{\varepsilon^2 \partial^2 \dt w,\varepsilon^2 \partial \dt w, \\ \varepsilon^2 \dt w,  \varepsilon \partial^2 \dt \vec v}}{L^2}
             + \norm{\varepsilon^2 \partial \dt w,\varepsilon \partial \dt \vec v}{L^3} \norm{\nabla^2 q,\frac{\dz \partial q}{\varepsilon}}{L^6}\\
            & \lesssim \mathcal H(\mathfrak E(t)) (\mathfrak D(t)^{1/2} + 1),
        \end{aligned}
    \end{equation}
    where we have applied the Sobolev embedding inequality, \eqref{ellest:w}, and \eqref{ellest:dt-v-w}.

    \begin{equation}
        \label{ac-ene-est:002}
    \begin{aligned}
         \norm{I_2}{L^2} & \lesssim \|\varepsilon\partial_t q\|_{H^2} \|\partial v, \partial w\|_{L^\infty} + \|\partial^2 v, \partial^2 w\|_{L^3} \|\varepsilon\partial_t \nabla q\|_{L^6} 
         \\
         & \lesssim \mathcal H(\mathfrak E(t)) (\mathfrak D(t)^{1/2} + 1),
    \end{aligned}
    \end{equation}
where we have used \eqref{ellest:w-d}.

\medskip

{\par\noindent\bf Estimate of $ \varepsilon \nabla^2 G_{1,2} $:} Similarly, from \eqref{def:g-1-ddd}, one can calculate that
\begin{equation}
\label{ac-ene-est:003}
    \begin{gathered}
        \varepsilon \nabla^2 G_{1,2} = \varepsilon \biggl( \substack{\partial \nabla^2 w \dz \partial g + \nabla^2 w \dz \partial^2 g - \partial \nabla^2 \vec v \cdot \nablah \partial q \\ - \partial^2 \nabla^2 \vec v \cdot \nablah q - \partial \nabla^2 w \dz \partial q - \partial^2 \nabla^2 w \dz q 
        } \biggr) 
        + \varepsilon \biggl( \substack{- \partial \vec v \cdot \nablah \partial \nabla^2 q - \partial^2 \vec v \cdot \nablah \nabla^2 q \\ - \partial w \dz \partial \nabla^2 q - \partial^2 w \dz \nabla^2 q 
        } \biggr) + \mrm{l.o.t.} \\ =: I_3 + I_4 + \mrm{l.o.t.},
    \end{gathered}
\end{equation}
where we have only written down the highest order terms.
With similar and straightforward estimates, one has that 
\begin{align}
    & 
    \label{ac-ene-est:004} 
    \begin{aligned}
        \|I_3\|_{L^2} & \lesssim \norm{\varepsilon \nabla w ,\nabla \vec v}{H^3} (1 + \norm{q}{H^3}) \lesssim (1+ \mathfrak E^{1/2})\mathfrak D^{1/2},
    \end{aligned}\\
    \intertext{and}
    & \label{ac-ene-est:005} \begin{aligned}
        \norm{I_4}{L^2} & \lesssim \norm{\vec v,\varepsilon w}{H^3} \norm{\Delta q}{H^2} \lesssim\mathfrak E^{1/2}\mathfrak D^{1/2}.
    \end{aligned}
\end{align}
$\varepsilon G_{1,2}$ is lower order term, and can be bounded similarly.

\medskip

{\noindent \bf Estimate of $\nabla G_{2,2}$:}\ Direct calculation yields from \eqref{def:g-2-ddd}, 
\begin{equation}
\label{ac-ene-est:006}
    \begin{gathered}
        \nabla G_{2,2} = \biggl( \substack{\partial \partial \Delta \vec v  \partial (\frac{1}{\rho}) + \partial  \Delta \vec v  \partial^2 (\frac{1}{\rho}) - \partial \partial \vec v \cdot \nablah \partial \vec v \\ - \partial^2 \partial \vec v \cdot \nablah \vec v - \partial \partial w \dz \partial \vec v - \partial^2 \partial w \dz \vec v 
        } \biggr) 
        +  \biggl( \substack{\partial \Delta \vec v  \partial  \partial (\frac{1}{\rho}) +   \Delta \vec v  \partial\partial^2 (\frac{1}{\rho}) - \partial \vec v \cdot \nablah \partial  \partial \vec v \\ - \partial^2  \vec v \cdot \nablah \partial \vec v -  \partial w \dz \partial \partial \vec v - \partial^2  w \dz \partial \vec v 
        } \biggr) =: I_5 + I_6,
    \end{gathered}
\end{equation}
As before, one has that
\begin{align}
    & \label{ac-ene-est:007} 
    \begin{aligned}
        \norm{I_5+I_6}{L^2} & \lesssim \norm{\vec v}{H^3} \norm{\vec v}{H^3} + \norm{w}{H^3} \norm{\vec v}{H^3} + \norm{q}{H^3} \norm{\nabla \vec v}{H^3}  
        \\
        &\lesssim  H(\mathfrak E(t)) (\mathfrak D(t)^{1/2} + 1),
    \end{aligned}
    \end{align}
where we have applied \eqref{ellest:w-d}.

\medskip 
{\noindent\bf Estimate of $ \frac{\dz G_{3,2}}{\varepsilon } $:} From \eqref{def:g-3-ddd}, one has that
\begin{equation}
    \label{ac-ene-est:009}
    \begin{gathered}
    \frac{\dz G_{3,2}}{\varepsilon} = \biggl( \substack{ -\partial \dz\vec v \cdot \nablah \partial w - \partial^2 \dz \vec v \cdot \nablah w
    - \partial \dz w \dz \partial w \\
    - \partial^2 \dz w \dz w + \partial \dz (\frac{1}{\rho}) \partial \Delta w + \partial^2 \dz (\frac{1}{\rho}) \Delta w
    }
    \biggr) 
    + \biggl( \substack{ -\partial \vec v \cdot \nablah \partial \dz w - \partial^2 \vec v \cdot \nablah  \dz w
    - \partial w \dz \partial \dz w \\
    - \partial^2 w \dz \dz w + \partial (\frac{1}{\rho}) \partial \Delta \dz w + \partial^2 (\frac{1}{\rho}) \Delta \dz w
    }
    \biggr) =: I_7 + I_8.
    \end{gathered}
\end{equation}
Again, one has that 
\begin{equation}
    \label{ac-ene-est:010}
    \norm{I_7 + I_8}{L^2} \lesssim (\norm{\vec v}{H^3} + \norm{w,\dz w}{H^2} ) \norm{ w,\dz w}{H^3} + \norm{\varepsilon \nabla^2 q}{H^2} \norm{w, \dz w}{H^3}.
\end{equation}
$G_{3,2}$ is lower order term, and can be bounded similarly.

Combining all the estimates on $G_{i,k}$, $ i = 1,2,3,\ k = 0,1,2 $, we have
\begin{equation*}
    \norm{\varepsilon \dt G_{1,k},\varepsilon \nabla^2 G_{1,k},\varepsilon G_{1,k},\nabla G_{2,k},\frac{\dz G_{3,k}}{\varepsilon},G_{3,k}}{L^2} \lesssim H(\mathfrak E(t)) (\mathfrak D(t)^{1/2} + 1),
\end{equation*}
which is exactly \eqref{ene-est:g-s-aw}.

\medskip

{\noindent\bf Estimate of \eqref{ene-est:ene-ddd-aw}:} 
Finally, applying Lemma \ref{lm:acoustic-energy-flow} with $ (\xi, V, W) = (\nabla^j q, \nabla^j \vec v, \nabla^j w) $, $ j = 0,1,2 $, and embedding inequalities, together with \eqref{ene-est:g-s-aw} lead to \eqref{ene-est:ene-ddd-aw}.

\end{proof}

\begin{remark}[Isentropic flow]
\label{rm:isentropic-obstruction}
For isentropic flow, i.e., $ p(\rho) = \rho^\gamma $ for $ \gamma > 1 $, the current acoustic wave estimate in Proposition \ref{prop:uniform-aw} does not close with the available energy. 
For instance, consider, \(p(\rho)=\frac12\rho^2\). The natural
symmetric variable is then given by
\[
        \rho=1+\varepsilon(q-g(z)).
\]
Then the continuity equation may be written as
\[
        \partial_tq+\vec v\cdot\nabla_hq+w\partial_zq-w\partial_zg
        +
        \left(\frac1\varepsilon+q-g\right)
        (\operatorname{div}_h\vec v+\partial_z w)=0.
\]
After applying \(\partial^\alpha\) for any $ \alpha \geq 1 $, one obtains the lower-order source of the form, for $ 0 < \beta \leq \alpha $,
\[
        \partial^\beta(q-g)
        \bigl(
        \operatorname{div}_h\partial^{\alpha-\beta}\vec v
        +\partial_z\partial^{\alpha-\beta}w
        \bigr).
\]
The acoustic estimate requires controlling the time derivative of the
commuted continuity source.  Therefore this term generates contributions of
the type, 
\[
        \varepsilon\,\partial^\beta(q-g)
        \bigl(
        \operatorname{div}_h\partial_t\partial^{\alpha-\beta}\vec v
        +\partial_z\partial_t\partial^{\alpha-\beta}w
        \bigr),
\]
corresponding to $ \varepsilon \dt G_{1,2} $ in \eqref{ac-ene-est:000}. In particular, the term 
$
        \varepsilon\,\partial^\beta(q-g)\,
        \partial_z\partial_t\partial^{\alpha-\beta}w
$
can not be controlled uniformly in $ \varepsilon $.
In fact, similar to the linear analysis \eqref{time-derivative-w}, one can only expect 
$
        \mathcal O(\varepsilon\partial_t w)
        \geq \mathcal O(\varepsilon^{-1})
$.
Hence the above source term is singular as $ \varepsilon \rightarrow 0 $, i.e., 
$$
        \varepsilon\,\partial^\beta(q-g)\,
        \partial_z\partial_t\partial^{\alpha-\beta}w \geq \mathcal O(\varepsilon^{-1}).
$$
\end{remark}

\subsection{Uniform-in-$\varepsilon$ estimate and proof of Theorem \ref{thm:uniform-est}}
\label{subsec:uniform-est}

Collecting Lemma \ref{lm:equivalence} and Propositions \ref{prop:uniform-est}--\ref{prop:uniform-aw}, we have shown
\begin{equation}
    \label{ene-ineq-total}
    \dfrac{d}{dt}\mathcal E_\mrm{total} +  \frac12 \mathcal D_{\mrm{total}} \leq \mathcal H(\mathcal E_\mrm{total}) (1+ \mathcal D_\mrm{total}^{1/2}), 
\end{equation}
where $ \mathcal E_\mrm{total} $ and $ \mathcal D_\mrm{total} $ are given in \eqref{def:total-energy} and \eqref{def:total-dissipation}, respectively. 
This leads to
\begin{equation}
    \label{ene-total}
    \sup_{0\leq t \leq T} \mathcal E_\mrm{total}(t) + \int_0^T \mathcal D_\mrm{total}(s)\,ds \leq C,
\end{equation}
for some $ T, C\in(0,\infty) $, independent of $ \varepsilon $ but may depend on $ \mathcal E_0 $. Moreover, it is easy to verify that \eqref{ene:a-priori-rho} holds for $ \varepsilon $ small enough thanks to \eqref{def:symmetric_variable}. Together with Lemma \ref{lm:equivalence} and Proposition \ref{prop:elliptic-est},
this concludes the proof of Theorem \ref{thm:uniform-est}.

\section{Asymptotic limit}
\label{sec:limit}

\subsection{Wave decomposition and compactness}
\label{subsec:wave-dcmpt}

It follows from Theorem \ref{thm:uniform-est}, that 
\begin{equation}
    \label{uniform-est-total-01}
    \norm{q,\vec v,\varepsilon w}{L^\infty(0,T;H^3)\cap L^2(0,T;H^4)} + \norm{w,\dz w,\frac{\dz q}{\varepsilon},\varepsilon \dt q}{L^\infty(0,T;H^2)\cap L^2(0,T;H^3)} < C < \infty,
\end{equation}
for some constant $ C \in (0,\infty) $. 

Meanwhile, recalling Definition \eqref{def:wave-proj}, we write 
\begin{equation}
    \label{wdpt-001}
    \mathcal U:=\begin{pmatrix}
        q \\ \vec v \\ w
    \end{pmatrix} = \mathcal U_\sigma + \mathcal U_\mrm{ha} + \mathcal U_\mrm{va},
\end{equation}
where
\begin{align}
    & \label{wdpt-002}
    \begin{aligned}
    \mathcal U_\mrm{ha} & := \mathcal P_\mrm{ha} (q,\vec v,w) = \begin{pmatrix}
        \overline q - m_q \\ U_\mrm{ha}
    \end{pmatrix} \\& \quad \text{with} \ U_\mrm{ha} = \begin{pmatrix}
        \vec v_\mrm{ha} \\ w_\mrm{ha}=0
    \end{pmatrix} := P_\mrm{ha}(\vec v,w) = \begin{pmatrix}
        \nablah \Phi_h(v) \\ 0
    \end{pmatrix} = \begin{pmatrix}
        \nablah \Delta_h^{-1} \dvh \overline {\vec v} \\0
    \end{pmatrix}, \end{aligned} \\
    & \label{wdpt-003} 
    \begin{aligned}
    \mathcal U_\mrm{va} & := \mathcal P^\varepsilon_\mrm{va}(q,\vec v,w) = \begin{pmatrix}
        \widetilde q \\ U_\mrm{va}
    \end{pmatrix} \\
    & \quad \text{with} \ U_\mrm{va} = \begin{pmatrix}
        \vec v_\mrm{va} \\ w_\mrm{va}
    \end{pmatrix} := P^\varepsilon_\mrm{va}(\vec v,w) = \begin{pmatrix}
        \varepsilon^2 \nablah \Phi_3(\vec v,w) \\ \dz \Phi_3(\vec v,w)
    \end{pmatrix} = \begin{pmatrix}
        \varepsilon^2 \nablah (\varepsilon^2 \Delta_h + \partial_{zz})^{-1} (\dvh \widetilde {\vec v} + \dz w) \\ \dz (\varepsilon^2 \Delta_h + \partial_{zz})^{-1} (\dvh \widetilde {\vec v} + \dz w)
    \end{pmatrix}, 
    \end{aligned}\\
    & \label{wdpt-004}
    \begin{aligned}
        \mathcal U_\sigma & := \mathcal P^\varepsilon_\sigma (q,\vec v,w) = \begin{pmatrix}
            m_q \\ U_\sigma
        \end{pmatrix}\\
        & \quad \text{with} \ U_\sigma = \begin{pmatrix}
            \vec v_\sigma \\ w_\sigma
        \end{pmatrix}:= \begin{pmatrix}
            \vec v \\ w
        \end{pmatrix} - U_\mrm{ha} - U_\mrm{va}.
    \end{aligned}
\end{align}
Then, one can obtain the following lemma:
\begin{lemma}
    \label{lm:wave-vector-norms}
    Thanks to \eqref{uniform-est-total-01}--\eqref{wdpt-004}, one has that uniformly-in-$\varepsilon $,
    \begin{equation}
        \label{wdpt-005}
        \begin{gathered}
        \overline q - m_q, \widetilde q, \vec v_\mrm{ha}, \vec v_\mrm{va}, \vec v_\sigma, \varepsilon w_\mrm{va} \in L^\infty(0,T;H^3)\cap L^2(0,T;H^4), \qquad m_q \in L^\infty(0,T)\cap L^2(0,T), \\
        \frac{\widetilde q}{\varepsilon}, \frac{\vec v_\mrm{va}}{\varepsilon} ,w_\mrm{va}, w_\sigma \in L^\infty(0,T;H^2) \cap L^2(0,T;H^3).
        \end{gathered}
    \end{equation}
In particular,
\begin{equation}\label{reg:U}
         \mathcal U_\mrm{ha} \in L^\infty(0,T;H^3)\cap L^2(0,T;H^4),\quad \mathcal U_\mrm{va}, \ \mathcal U_\sigma \in L^\infty(0,T;H^2)\cap L^2(0,T;H^3).
\end{equation}
\end{lemma}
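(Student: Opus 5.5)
The plan is to go through the components one at a time, using only the uniform bounds \eqref{uniform-est-total-01} and the explicit formulas \eqref{wdpt-002}--\eqref{wdpt-004}. Everything is diagonal in Fourier variables $\vec k=(\vec k_h,k_3)\in 2\pi\mathbb Z^2\times\pi\mathbb Z$, so I will estimate symbols mode by mode, uniformly in $\varepsilon$.

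\textbf{Density part.} The barotropic and baroclinic projections commute with derivatives and are bounded on every $H^s$. Hence $\overline q-m_q$ and $\widetilde q$ inherit the $L^\infty H^3\cap L^2H^4$ bound of $q$. The mean $m_q$ is bounded by $\norm{q}{L^2}$. For $\widetilde q/\varepsilon$, note that $\widetilde q$ has zero vertical average, so Poincar\'e in $z$ gives $\norm{\widetilde q}{H^s}\lesssim\norm{\dz q}{H^{s}}$, applied with $\dz$ and its antiderivative commuting with horizontal derivatives. Therefore $\norm{\widetilde q/\varepsilon}{H^{s+1}}\lesssim\norm{\dz q/\varepsilon}{H^s}$ for $s=2,3$, which is even better than claimed.

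\textbf{Horizontal acoustic part.} $\vec v_\mrm{ha}=\nablah\Delta_h^{-1}\dvh\overline{\vec v}$ is a Leray-type projection of the vertical average. Its symbol has modulus at most $1$, so $\vec v_\mrm{ha}$ is bounded in the same spaces as $\vec v$.

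\textbf{Vertical acoustic part.} This is the step where uniformity in $\varepsilon$ is the issue. On $k_3\neq0$ (the operator only sees the baroclinic part), the symbols are
\[
w_\mrm{va}:\ \frac{k_3(\vec k_h\cdot\hat{\widetilde{\vec v}}+k_3\hat w)}{\varepsilon^2|\vec k_h|^2+k_3^2},\qquad \vec v_\mrm{va}:\ \frac{\varepsilon^2\vec k_h(\vec k_h\cdot\hat{\widetilde{\vec v}}+k_3\hat w)}{\varepsilon^2|\vec k_h|^2+k_3^2}.
\]

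For $w_\mrm{va}$, I split the symbol into two pieces:
\begin{itemize}
\item the $\hat w$ piece has multiplier $k_3^2/(\varepsilon^2|\vec k_h|^2+k_3^2)\le1$;
\item the $\vec v$ piece has multiplier $k_3\vec k_h/(\cdots)\le |\vec k_h|/|k_3|$, so it is controlled by $\norm{\nablah\widetilde{\vec v}}{H^s}$ using $|k_3|\ge\pi$.
\end{itemize}
This gives $w_\mrm{va}\in L^\infty H^2\cap L^2H^3$ from the bounds on $w$ and $\vec v$. Multiplying the first piece instead by $\varepsilon$ and using $\varepsilon w\in L^\infty H^3\cap L^2H^4$, together with $\vec v\in L^\infty H^3\cap L^2H^4$ for the second, yields $\varepsilon w_\mrm{va}$ in the top spaces.

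For $\vec v_\mrm{va}$, the $\vec v$ piece has multiplier $\varepsilon^2|\vec k_h|^2/(\varepsilon^2|\vec k_h|^2+k_3^2)\le1$. The $w$ piece has multiplier $\varepsilon^2|\vec k_h||k_3|/(\varepsilon^2|\vec k_h|^2+k_3^2)\le\varepsilon/2$ by AM--GM, applied to $\hat w$. So $\vec v_\mrm{va}$ is in $L^\infty H^3\cap L^2H^4$ after using $\varepsilon w$.

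For $\vec v_\mrm{va}/\varepsilon$, I write $\dvh\widetilde{\vec v}+\dz w=-\varepsilon(\dt q+\vec v\cdot\nablah q+w\dz q-w\dz g)$ from \eqref{hbsym-continuity}, and its baroclinic part likewise. Then $\vec v_\mrm{va}/\varepsilon=\varepsilon\nablah(\varepsilon^2\Delta_h+\partial_{zz})^{-1}\widetilde F$ with $\widetilde F=-\varepsilon(\cdots)$. The multiplier $\varepsilon|\vec k_h|/(\varepsilon^2|\vec k_h|^2+k_3^2)\lesssim 1/|k_3|\lesssim1$. So it suffices that $\widetilde F$ is bounded in $L^\infty H^2\cap L^2H^3$, which follows from $\varepsilon\dt q$ and products in the algebra $H^2$ using $w\in L^\infty H^2\cap L^2H^3$ and $\dz g=\mathcal O(\varepsilon)$. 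This product/regularity bookkeeping, and checking the exact regularity indices in $L^2H^3$ for the nonlinear terms, is the main obstacle. I expect it to close using the bound on $\norm{w}{H^3}$ from Theorem~\ref{thm:uniform-est}.

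\textbf{Slow part and conclusion.} Finally, $\vec v_\sigma=\vec v-\vec v_\mrm{ha}-\vec v_\mrm{va}$ and $w_\sigma=w-w_\mrm{va}$ follow from the triangle inequality. The statement \eqref{reg:U} is then just a collection of the component bounds, taking the worst regularity in each vector.
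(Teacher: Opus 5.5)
Your argument proves what the lemma needs, and it is essentially the paper's: $\varepsilon$-uniform Fourier-multiplier bounds for each projection, with the AM--GM bound on $\varepsilon|\vec k_h|/(\varepsilon^2|\vec k_h|^2+k_3^2)$ as the key input, and the slow parts obtained by subtraction. You sometimes feed in $w$ or $\varepsilon w$ where the paper uses $\dz w$; both are available in \eqref{uniform-est-total-01}. Three points need correcting or tightening.

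\textbf{A false side claim.} The inequality $\norm{\widetilde q/\varepsilon}{H^{s+1}}\lesssim\norm{\dz q/\varepsilon}{H^s}$ does not hold. Poincar\'e in $z$ gains only a vertical derivative. On modes with $|\vec k_h|\gg|k_3|$ you cannot bound $\langle\vec k\rangle^{s+1}|\hat q_{\vec k}|$ by $\langle\vec k\rangle^{s}|k_3\hat q_{\vec k}|$; take $q=\varepsilon\cos(\pi z)\cos(2\pi N x)$ with $N\to\infty$. What is true is the same-index bound $\norm{\widetilde q/\varepsilon}{H^s}\lesssim\norm{\dz q/\varepsilon}{H^s}$ for $s=2,3$. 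The paper obtains this from its explicit antiderivative formula, and it gives exactly the stated $L^\infty(0,T;H^2)\cap L^2(0,T;H^3)$. Nothing stronger is needed, so this error is harmless, but the ``even better'' sentence should go.

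\textbf{An unnecessary detour.} Going through \eqref{hbsym-continuity} for $\vec v_{\mrm{va}}/\varepsilon$ is not needed, and the ``main obstacle'' you flag is not there. The quantity $\dvh\widetilde{\vec v}+\dz w$ is already bounded in $L^\infty(0,T;H^2)\cap L^2(0,T;H^3)$ by $\nablah\vec v$ and $\dz w$ from \eqref{uniform-est-total-01}. Your multiplier bound $\varepsilon|\vec k_h|/(\varepsilon^2|\vec k_h|^2+k_3^2)\le 1/(2|k_3|)$ then gives directly $|\hat{\vec v}_{\mrm{va},\vec k}|/\varepsilon\le|\vec k_h\cdot\hat{\vec v}_{\vec k}|+|k_3\hat w_{\vec k}|$, which is the paper's argument. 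Your detour does close, since $\varepsilon\dt q\in L^\infty H^2\cap L^2H^3$ and the product terms are tame, but it is extra work.

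\textbf{A step to make explicit.} For the $\vec v$-piece of $\varepsilon w_{\mrm{va}}$ you must use $\varepsilon|\vec k_h||k_3|/(\varepsilon^2|\vec k_h|^2+k_3^2)\le\frac12$. The cruder bound $|\vec k_h|/|k_3|$, which you used for $w_{\mrm{va}}$, costs a horizontal derivative: it would need $\vec v\in L^\infty(0,T;H^4)$ and fails at the $L^\infty H^3$ level.
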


\begin{proof}
We start with the $q$ terms. From \eqref{uniform-est-total-01}, we have $q\in L^\infty(0,T;H^3)\cap L^2(0,T;H^4)$ and $\frac{\dz q}{\varepsilon}\in L^\infty(0,T;H^2)\cap L^2(0,T;H^3)$. Therefore $\overline q - m_q, \widetilde q\in L^\infty(0,T;H^3)\cap L^2(0,T;H^4)$. As $m_q$ is a function of time only, we have $m_q\in L^\infty(0,T)\cap L^2(0,T)$. 
In addition, as $\int_{-1}^1 \widetilde qdz = 0$, one can easily verify that 
\[\frac{\widetilde q}{\varepsilon}(x,z,t) = \int_{-1}^z \frac{\dz \widetilde q}{\varepsilon}(x,\tilde z,t) d\tilde z - \frac{1}{2} \int_{-1}^{1} \int_{-1}^s \frac{\dz \widetilde q}{\varepsilon}(x,\tilde z,t) d\tilde z \,ds .
\]
This leads to $\frac{\widetilde q}{\varepsilon} \in L^\infty(0,T;H^2)\cap L^2(0,T;H^3)$.

Next for the $\vec v$ terms, recall that $\vec v_\mrm{ha} = \nablah\Phi_h(\vec v)$ and $\vec v_\mrm{va}=\varepsilon^2 \nablah \Phi_3(\vec v,w)$. In terms of Fourier modes, we have
\[
\hat{\vec v}_{ha,\vec k} = \frac{\vec k_h}{|\vec k_h|^2}(\vec k_h\cdot \hat {\vec v}_{\vec k}), \quad \hat{\vec v}_{va,\vec k}= \frac{\varepsilon^2 \vec k_h}{\varepsilon^2 |\vec k_h|^2 + |k_3|^2}(\vec k_h\cdot \hat{\vec v}_{\vec k} + k_3 \hat{w}_{\vec k})=: \hat{\vec v}_{va,\vec k,1}+ \hat{\vec v}_{va,\vec k,2}.
\]
By direct estimate, we have
\begin{equation*}
    \begin{gathered}
|\hat{\vec v}_{ha,\vec k}| \leq |\hat{\vec v}_{\vec k}|, \quad |\hat{\vec v}_{va,\vec k,1}| \leq |\hat{\vec v}_{\vec k}|, \quad |\vec k_h \cdot \hat{\vec v}_{va,\vec k,2}| \leq |k_3 \hat w_{\vec k}|,
\\
|k_3  \hat{\vec v}_{va,\vec k,2}|\leq |k_3 \hat w_{\vec k}| \frac{\varepsilon^2 |\vec k_h|}{\varepsilon^2 \frac{|\vec k_h|^2}{|k_3|} + |k_3|} \leq |k_3 \hat w_{\vec k}| \frac{\varepsilon^2 |\vec k_h|}{2\varepsilon |\vec k_h|}\leq \frac{\varepsilon}{2} |k_3 \hat w_{\vec k}|,
\end{gathered}
\end{equation*}
where in the last step we used the fact that the minimum of $f(x) = x + \frac{a^2}{x}$ is $2|a|$ for $x>0$. From \eqref{uniform-est-total-01}, we know that $\vec v \in L^\infty(0,T;H^3)\cap L^2(0,T;H^4)$ and $\partial_z w \in L^\infty(0,T;H^2)\cap L^2(0,T;H^3)$. Consequently, we have $\vec v_\mrm{ha}, \vec v_\mrm{va} \in L^\infty(0,T;H^3)\cap L^2(0,T;H^4)$. $\vec v_{\sigma}=\vec v- \vec v_\mrm{ha} - \vec v_\mrm{va}$ has the same bound. In addition, as 
\begin{align}\label{eps-bound-alg}
    \frac{\varepsilon |\vec k_h|}{\varepsilon^2 |\vec k_h|^2 + |k_3|^2} = \frac{1}{\varepsilon|\vec k_h| + \frac{k_3^2}{\varepsilon|\vec k_h|}} \leq \frac{1}{2|k_3|} \leq 1,
\end{align}
one has that  $|\frac{1}{\varepsilon}\hat{\vec v}_{va,\vec k}| \leq |\hat{\vec v}_{\vec k}\cdot \vec k_h| + |k_3 \hat w_{\vec k}|$. Therefore, $\frac{\vec v_\mrm{va}}{\varepsilon} \in L^\infty(0,T;H^2)\cap L^2(0,T;H^3)$.

Finally, for the $w$ terms, recall that $w_\mrm{va} = \dz \Phi_3$ and $w_\sigma = w- w_\mrm{va}$. In terms of Fourier modes, 
\[
|\hat w_{va,\vec k}|= \left|\frac{k_3}{\varepsilon^2 |\vec k_h|^2 + |k_3|^2}(\hat{\vec v}_{\vec k}\cdot \vec k_h + k_3 \hat w_{\vec k})\right| \leq |\hat{\vec v}_{\vec k}\cdot \vec k_h| + |k_3 \hat w_{\vec k}|
\]
since $|k_3|\geq 1$, and
\[
|\varepsilon\hat w_{va,\vec k}|= \left|\frac{\varepsilon k_3}{\varepsilon^2 |\vec k_h|^2 + |k_3|^2}(\hat{\vec v}_{\vec k}\cdot \vec k_h + k_3 \hat w_{\vec k})\right| \leq \frac12|\hat{\vec v}_{\vec k}| + |\varepsilon \hat w_{\vec k}|
\]
thanks to \eqref{eps-bound-alg}.
Therefore, $w_\mrm{va}, w_\sigma \in L^\infty(0,T;H^2)\cap L^2(0,T;H^3)$ and $\varepsilon w_\mrm{va} \in L^\infty(0,T;H^3)\cap L^2(0,T;H^4)$.

\end{proof}

\begin{lemma}\label{lemma:Lva-decomposition}
    Any vector field $\varphi=(\varphi_q, \varphi_v, \varphi_w)^\top: \mathbb T^2\times 2\mathbb T \mapsto \mathbb R \times \mathbb R^2 \times \mathbb R $ can be decomposed as 
    \begin{equation}
    \label{def:va-decomposition}
      \varphi = {\varphi_\mrm{va,ker}}, 
      +  {\varphi_\mrm{va,\perp}}, 
    \end{equation}
    where $ \varphi_\mrm{va,ker} := \mathcal Q_\mrm{va}^\mrm{ker} \varphi \in \mathrm{ker} \mathcal L_\mrm{va}^\varepsilon$ and $ \varphi_\mrm{va,\perp} := \mathcal Q_\mrm{va}^{\perp} \varphi \in (\mathrm{ker} \mathcal L_\mrm{va}^\varepsilon)^\perp $ with respect to the $ L^2 $-inner product. Here $ \mathcal Q_\mrm{va}^\mrm{ker} $ and $ \mathcal Q_\mrm{va}^\perp $ are the corresponding $L^2$-orthogonal projectors. 
    Moreover, it holds that
    \begin{equation}\label{bdd-Lvaepsinverse}
        \norm{(\mathcal L_\mrm{va}^\varepsilon)^{-1} \mathcal Q_\mrm{va}^\perp \widetilde \varphi}{H^s} \lesssim \norm{\widetilde\varphi}{H^s}, \qquad \norm{(\mathcal L_\mrm{va}^\varepsilon)^{-1} \mathcal Q_\mrm{va}^\perp \overline \varphi}{H^s} \lesssim \frac{1}{\varepsilon} \norm{\overline\varphi}{H^s}.
    \end{equation}
    
\end{lemma}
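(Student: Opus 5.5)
Since all the operators involved have constant coefficients on the torus $\mathbb T^2\times 2\mathbb T$, I will work mode by mode in Fourier space. I regard $\mathcal L_\mrm{va}^\varepsilon$ as a constant-coefficient skew-adjoint operator acting on triples $(\varphi_q,\varphi_v,\varphi_w)$, extended from \eqref{def:vertical-aw-operator} by
\[
\mathcal L_\mrm{va}^\varepsilon\varphi=\bigl(\varepsilon\dvh\varphi_v+\dz\varphi_w,\ \varepsilon\nablah\varphi_q,\ \dz\varphi_q\bigr)^\top .
\]
At frequency $\vec k=(\vec k_h,k_3)$ its symbol is the $4\times4$ matrix
\[
i\begin{pmatrix}0&\varepsilon\vec k_h^\top&k_3\\ \varepsilon\vec k_h&0&0\\ k_3&0&0\end{pmatrix},
\]
which is anti-Hermitian. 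Hence $\mathrm{ker}\,\mathcal L_\mrm{va}^\varepsilon$ and its $L^2$-orthogonal complement split frequency by frequency. I define $\mathcal Q_\mrm{va}^\mrm{ker}$ as the Fourier multiplier projecting onto the kernel of the symbol, and $\mathcal Q_\mrm{va}^\perp=I-\mathcal Q_\mrm{va}^\mrm{ker}$. Both are orthogonal projectors commuting with derivatives, which gives the decomposition \eqref{def:va-decomposition}.

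Next I compute the spectrum of the symbol. With $\lambda_{\vec k}:=\sqrt{\varepsilon^2|\vec k_h|^2+k_3^2}$, the matrix has rank two when $(\varepsilon\vec k_h,k_3)\neq0$. The nonzero eigenvalues are $\pm i\lambda_{\vec k}$, and the eigenvectors are $(1,\pm\varepsilon\vec k_h/\lambda_{\vec k},\pm k_3/\lambda_{\vec k})$, consistent with \eqref{sol:k-th-mode-vertical-acoustic}. The range of the symbol is exactly the span of these two eigenvectors, so on $(\mathrm{ker})^\perp$ the inverse acts by multiplication by $\mp i/\lambda_{\vec k}$. Since the eigenvectors are orthonormal after normalization, this gives the pointwise bound
\[
|\widehat{(\mathcal L_\mrm{va}^\varepsilon)^{-1}\mathcal Q_\mrm{va}^\perp\varphi}(\vec k)|\le \lambda_{\vec k}^{-1}|\hat\varphi(\vec k)| .
\]
Summing with the weights $\langle\vec k\rangle^{2s}$ reduces both inequalities in \eqref{bdd-Lvaepsinverse} to lower bounds on $\lambda_{\vec k}$ over the relevant frequencies.

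The baroclinic part $\widetilde\varphi$ has Fourier support in $k_3\neq0$. Since $k_3\in\pi\mathbb Z$, this gives $\lambda_{\vec k}\ge|k_3|\ge\pi$ uniformly in $\varepsilon$, which yields the first estimate. The barotropic part $\overline\varphi$ has support in $k_3=0$. For $\vec k_h\neq0$ we have $\lambda_{\vec k}=\varepsilon|\vec k_h|\ge2\pi\varepsilon$, which gives the factor $1/\varepsilon$. The zero mode $\vec k=0$ lies entirely in the kernel, so it contributes nothing after applying $\mathcal Q_\mrm{va}^\perp$.

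The step I expect to need the most care is the identification of the kernel and its complement, together with the convention for how $\mathcal L^\varepsilon_\mrm{va}$ acts on a general $\varphi$ rather than only on vectors of the form $(\widetilde\eta/\varepsilon,\varepsilon\nablah\Phi_3,\dz\Phi_3)$. In particular, at $k_3=0$ the third component $\dz\varphi_q$ vanishes, so the kernel there is larger and must be handled correctly. I will fix the extension above, check that it agrees with \eqref{def:vertical-aw-operator} on the vertical acoustic vectors, and then verify mode by mode that the range is the orthogonal complement of the kernel. This holds because the symbol is anti-Hermitian, hence normal. The rest of the argument is Plancherel.
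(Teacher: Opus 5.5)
Your proof is correct and is essentially the paper's argument. Both reduce to a Fourier-multiplier analysis of the same $L^2$-orthogonal kernel/complement splitting. In both, the bounds come from $\sqrt{\varepsilon^2|\vec k_h|^2+k_3^2}\ge|k_3|\ge\pi$ on baroclinic modes and $\sqrt{\varepsilon^2|\vec k_h|^2+k_3^2}=\varepsilon|\vec k_h|\ge 2\pi\varepsilon$ on barotropic modes.

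The only difference is packaging. The paper writes $\mathcal Q^\perp_\mrm{va}\varphi=(\varphi_q-m_{\varphi_q},\varepsilon\nablah\Psi,\dz\Psi)$ and $(\mathcal L^\varepsilon_\mrm{va})^{-1}$ explicitly through $\Psi=(\varepsilon^2\Delta_h+\partial_{zz})^{-1}(\varepsilon\dvh\varphi_v+\dz\varphi_w)$. You instead obtain them from the spectral decomposition of the anti-Hermitian symbol, which also makes the orthogonality automatic; the paper only asserts it ``by direct calculation.''

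One small correction to your last paragraph: at $k_3=0$, $\vec k_h\neq0$ the kernel is not larger. The entry $\varepsilon\nablah\varphi_q$ keeps the symbol at rank two, as you state earlier. Only $\vec k=0$ is fully degenerate, and that mode is annihilated by $\mathcal Q^\perp_\mrm{va}$.
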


\begin{proof}
Recall that $ \mathcal L_\mrm{va}^\varepsilon $ in \eqref{def:vertical-aw-operator} is given by
\begin{equation}
    \mathcal L_\mrm{va}^\varepsilon \varphi = \begin{pmatrix}
        \varepsilon \dvh\varphi_v + \dz \varphi_w \\
        \varepsilon \nablah \varphi_q \\ \dz \varphi_q
    \end{pmatrix}.
\end{equation}
    Define
    \begin{align}
    \label{def:projection-Q}
    \varphi_\mrm{va,\perp} = \mathcal Q_\mrm{va}^\perp \varphi & := \begin{pmatrix}
        \varphi_q - {m_{\varphi_q}} \\ \varepsilon \nablah \Psi \\
        \dz \Psi
    \end{pmatrix},\qquad 
    \varphi_\mrm{va,ker} = \mathcal Q_\mrm{va}^\mrm{ker} \varphi = \varphi - \varphi_\mrm{va,\perp} 
     :=
    \begin{pmatrix}
        m_{\varphi_q}
        \\
        \frac{1}{\varepsilon}\psi_{v} 
        \\
        \psi_w
    \end{pmatrix}
    \end{align}
    where $\Psi=(\varepsilon^2 \Delta_h + \partial_{zz})^{-1} (\varepsilon \dvh \varphi_v + \dz \varphi_w )$, $\psi_v = \varepsilon(\varphi_v - \varepsilon\nabla_h \Psi)$, and $\psi_w = \varphi_w - \dz \Psi$. One can verify that $\divh \psi_v + \dz \psi_w=0$, which leads to $\varphi_\mrm{va,ker} \in \mathrm{ker} \mathcal L_\mrm{va}^\varepsilon$, and thus $\varphi_\mrm{va,\perp} = \varphi-\varphi_\mrm{va,ker} \in (\mathrm{ker} \mathcal L_\mrm{va}^\varepsilon)^{\perp}$. By direct calculation it follows that $\mathcal Q_\mrm{va}^\mrm{ker} $ and $ \mathcal Q_\mrm{va}^\perp $ are $L^2$-orthogonal projections.

    Now we compute 
    \begin{align*}
        (\mathcal L_\mrm{va}^\varepsilon)^{-1} \mathcal Q_\mrm{va}^\perp \varphi = (\mathcal L_\mrm{va}^\varepsilon)^{-1} \varphi_\mrm{va,\perp} =
        \begin{pmatrix}
            (\varepsilon^2 \Delta_h + \partial_{zz})^{-1} (\varepsilon \dvh \varphi_v + \dz \varphi_w) 
            \\
            \varepsilon\nablah (\varepsilon^2 \Delta_h + \partial_{zz})^{-1}(\varphi_q - m_{\varphi_q})
            \\
            \dz (\varepsilon^2 \Delta_h + \partial_{zz})^{-1}(\varphi_q - m_{\varphi_q})
        \end{pmatrix}.
    \end{align*}
    Similar to the proof in Lemma~\ref{lm:wave-vector-norms}, one obtains that \eqref{bdd-Lvaepsinverse} holds.
    
\end{proof}

Now, we rewrite system \eqref{sys:hst-bsnq-sym} as the following:
\begin{equation}
    \label{sys:wdpt-form}
    \partial_t \begin{pmatrix}
        q \\ \vec v \\ w
    \end{pmatrix} + \begin{pmatrix}
        \frac{1}{\varepsilon} (\dvh \vec v + \dz w) \\ \frac{1}{\varepsilon} \nablah q \\ \frac{1}{\varepsilon^3} \dz q
    \end{pmatrix} = S =: \begin{pmatrix}
        S_1 \\ S_2 \\ S_3
    \end{pmatrix}
\end{equation}
where
\begin{equation}
    \label{wdpt-101}
    \begin{aligned}
        S_1 & := w \dz g - \vec v \cdot \nablah q - w \dz q && \in L^\infty(0,T;H^2) \cap L^2(0,T;H^3), \\
        S_2 & := \frac{1}{\rho} \Delta \vec v - \vec v \cdot \nablah \vec v - w \dz \vec v && \in L^\infty(0,T;H^1) \cap L^2(0,T;H^2), \\
        S_3 & := \frac{1}{\rho} \Delta w - \vec v \cdot \nablah w - w \dz w, \quad \dz S_3 && \in L^\infty(0,T;L^2) \cap L^2(0,T;H^1),
    \end{aligned}
\end{equation}
uniformly-in-$\varepsilon$ 
thanks to \eqref{uniform-est-total-01}.

Applying $ \mathcal P_\mrm{ha}, \ \mathcal P^\varepsilon_\sigma $ to both sides of system \eqref{sys:wdpt-form} leads to
\begin{align}
    \label{wdpt-102}
    (\dt + \frac{1}{\varepsilon} \mathcal L_\mrm{ha}) \mathcal U_\mrm{ha} & = \mathcal P_\mrm{ha}(S)  && \in  L^\infty(0,T;H^1) \cap L^2(0,T;H^2), \\
    \label{wdpt-103}
    \dt \mathcal U_\sigma & = \mathcal P_\sigma^\varepsilon (S) &&  \in  L^\infty(0,T;L^2) \cap L^2(0,T;H^1), 
\end{align}
uniformly-in-$\varepsilon $, where $ \mathcal L_\mrm{ha} $ is the horizontal acoustic wave operator defined in \eqref{def:horizontal-aw-operator}. Here the regularity follows with similar arguments as in the proof of Lemma \ref{lm:wave-vector-norms}. Consequently, one has that, uniformly-in-$\varepsilon $, 
\begin{equation}
    \label{wdpt-1031}
    \dt ( e^{\frac{t}{\varepsilon} \mathcal L_\mrm{ha}} \mathcal U_\mrm{ha}) = e^{\frac{t}{\varepsilon} \mathcal L_\mrm{ha}} \mathcal P_\mrm{ha}(S) \in L^\infty(0,T;H^1) \cap L^2(0,T;H^2).
\end{equation}

Due to the anisotropic nature of the problem, the vertical acoustic wave is a bit involved. Write, thanks to Lemma \ref{lm:wave-vector-norms},
\begin{equation}
    \label{wdpt-104}
    \begin{gathered}
    A_\varepsilon:= \begin{pmatrix}
        \frac{1}{\varepsilon} & & \\
        & \frac{1}{\varepsilon} \mathbb I_2 & \\
        & &   1
    \end{pmatrix} \qquad \text{and} \qquad \mathcal U_{\mrm{va},\varepsilon} := A_\varepsilon \mathcal U_\mrm{va} = \begin{pmatrix}
        \frac{\widetilde q}{\varepsilon} \\ \frac{\vec v_\mrm{va}}{\varepsilon} \\ w_\mrm{va}
    \end{pmatrix} \in L^\infty(0,T;H^2) \cap L^2(0,T;H^3),
    \\
    \varepsilon \mathcal U_{\mrm{va},\varepsilon} \in L^\infty(0,T;H^3) \cap L^2(0,T;H^4),
    \end{gathered}
\end{equation}
uniformly-in-$\varepsilon $. 
Then one has, after applying $ A_\varepsilon \mathcal P^\varepsilon_\mrm{va} $ to system \eqref{sys:wdpt-form}, that
\begin{equation}
    \label{wdpt-105}
    (\dt + \frac{1}{\varepsilon^2} \mathcal L_\mrm{va}^\varepsilon) \mathcal U_{\mrm{va},\varepsilon} = A_\varepsilon \mathcal P_\mrm{va}^\varepsilon (S) \in \frac{1}{\varepsilon} \Big[L^\infty(0,T;L^2) \cap L^2(0,T;H^1) \Big],
\end{equation}
where the vertical acoustic wave operator $ \mathcal L^\varepsilon_\mrm{va} $ is defined in \eqref{def:vertical-aw-operator}. Therefore, one has that
\begin{equation}
    \label{wdpt-106}
    \dt(\varepsilon e^{\frac{t}{\varepsilon^2}\mathcal L_\mrm{va}^\varepsilon} \mathcal U_{\mrm{va},\varepsilon} ) = \varepsilon e^{\frac{t}{\varepsilon^2}\mathcal L_\mrm{va}^\varepsilon} A_\varepsilon \mathcal P_\mrm{va}^\varepsilon (S) \in L^\infty(0,T;L^2) \cap L^2(0,T;H^1),
\end{equation}
uniformly-in-$\varepsilon $.

Therefore, we have the following compactness:
\begin{proposition}[Compactness]
    \label{prop:compactness}
    There exist
    \begin{equation}
        \label{def:limit-variable}
        \begin{gathered}
            \mathsf q_\mrm{ha}, \mathsf v_\mrm{ha}, \mathsf v_\sigma 
            \in L^\infty(0,T;H^3) \cap L^2(0,T;H^4),  \\
             \mathsf m_q \in L^\infty(0,T)\cap L^2(0,T), \qquad \mathsf w_\sigma \in L^\infty(0,T;H^2) \cap L^2(0,T;H^3), 
        \end{gathered}
    \end{equation}
    with 
    \begin{equation}
        \label{time-regularity}
        \begin{gathered}
        \dt \mathsf U_\sigma \in L^\infty (0,T;L^2) \cap L^2(0,T;H^1), \quad \dt \mathsf U_\mrm{ha} \in L^\infty(0,T;H^1) \cap L^2(0,T;H^2),
        \end{gathered}
    \end{equation}
    where 
    \begin{equation}
        \label{limit-vectors}
        \mathsf U_\sigma:= \begin{pmatrix}
            \mathsf m_q \\ \mathsf v_\sigma \\ \mathsf w_\sigma
        \end{pmatrix}, \quad \mathsf U_\mrm{ha} := \begin{pmatrix}
            \mathsf q_\mrm{ha} \\ \mathsf v_\mrm{ha} \\ \mathsf 0
         \end{pmatrix},
    \end{equation}
    such that 
    \begin{align}
        \label{wdpt-201} 
        \mathcal U_\sigma & \rightarrow \mathsf U_\sigma  & \text{in} & \ C([0,T];H^1)\cap L^2(0,T;H^2), \\
        \label{wdpt-202}
        e^{\frac{t}{\varepsilon}\mathcal L_\mrm{ha}}\mathcal U_\mrm{ha} & \rightarrow \mathsf U_\mrm{ha} & \text{in} & \ C([0,T];H^2)\cap L^2(0,T;H^3), \\
        \label{wdpt-203}
        \varepsilon e^{\frac{t}{\varepsilon^2} \mathcal L_\mrm{va}^\varepsilon} \mathcal U_\mrm{va,\varepsilon} & \rightarrow 0  & \text{in} & \ C([0,T];H^2) \cap L^2(0,T;H^3).
    \end{align}
    Moreover, 
    \begin{align}
    \label{wdpt-203-01}
    \dt \mathcal U_\sigma & \rightharpoonup \dt \mathsf U_\sigma & \text{weakly in} & \ L^2(0,T;H^1), \\
    \label{wdpt-204}
        \mathcal U_\mrm{ha} & \rightharpoonup 0 & \text{weakly in} & \ L^2(0,T;H^4),
        \\
        \label{wdpt-205}
        \mathcal U_\mrm{ha} & \overset{*}{\rightharpoonup} 0 & \text{weak$*$-ly in} & \ L^\infty(0,T;H^3),\\
    \label{wdpt-206}
    \mathcal U_\mrm{va,\varepsilon}, \ \mathcal U_\sigma & \rightharpoonup 0, \ \mathsf U_\sigma & \text{weakly in} & \ L^2(0,T;H^3),\\
    \label{wdpt-207}
    \mathcal U_\mrm{va,\varepsilon},\ \mathcal U_\sigma & \overset{*}{\rightharpoonup} 0,\ \mathsf U_\sigma & \text{weak$*$-ly in} & \ L^\infty(0,T;H^2).
    \end{align}
\end{proposition}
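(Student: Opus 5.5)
The plan is to combine the uniform bounds of Lemma \ref{lm:wave-vector-norms} with the time-derivative bounds \eqref{wdpt-103}, \eqref{wdpt-1031}, \eqref{wdpt-106}, and then apply the Aubin--Lions--Simon lemma. Weak and weak$*$ limits come from Banach--Alaoglu, after passing to a subsequence. The strong convergence claims \eqref{wdpt-201}--\eqref{wdpt-203} follow from compactness of $L^\infty(0,T;H^{s})\cap W^{1,\infty}(0,T;H^{s-2})$ into $C([0,T];H^{s-1})$. Similarly, $L^2(0,T;H^{s+1})\cap H^1(0,T;H^{s-1})$ embeds compactly into $L^2(0,T;H^{s})$, and $H^s\hookrightarrow H^{s-1}$ is compact on the torus.

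\emph{Slow wave.} By \eqref{reg:U}, $\mathcal U_\sigma$ is bounded in $L^\infty(0,T;H^2)\cap L^2(0,T;H^3)$. By \eqref{wdpt-103}, $\dt\mathcal U_\sigma$ is bounded in $L^\infty(0,T;L^2)\cap L^2(0,T;H^1)$. Aubin--Lions then gives \eqref{wdpt-201}, and weak compactness gives \eqref{wdpt-203-01}, \eqref{wdpt-206}, \eqref{wdpt-207} for $\mathcal U_\sigma$; the limit $\mathsf U_\sigma$ inherits the regularities \eqref{def:limit-variable}--\eqref{time-regularity} by lower semicontinuity. For the components, $m_q$ is bounded in time by \eqref{wdpt-005}, and $\vec v_\sigma$ is bounded in the $H^3/H^4$ spaces, so $\mathsf v_\sigma$ is more regular than $\mathsf w_\sigma$.

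\emph{Horizontal acoustic wave.} By \eqref{norm-presering-ha}, $e^{\frac t\varepsilon\mathcal L_\mrm{ha}}$ is an isometry on $H^s$ and commutes with derivatives. Hence $e^{\frac t\varepsilon\mathcal L_\mrm{ha}}\mathcal U_\mrm{ha}$ is bounded in $L^\infty(0,T;H^3)\cap L^2(0,T;H^4)$, and \eqref{wdpt-1031} controls its time derivative. Aubin--Lions gives \eqref{wdpt-202}, and the limit $\mathsf U_\mrm{ha}$ has the stated regularity, with last component $0$.

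For \eqref{wdpt-204}--\eqref{wdpt-205}, write $\mathcal U_\mrm{ha}=e^{-\frac t\varepsilon\mathcal L_\mrm{ha}}\bigl(e^{\frac t\varepsilon\mathcal L_\mrm{ha}}\mathcal U_\mrm{ha}\bigr)$ and test against a smooth $\phi(t,\vec x)$. Since $\mathcal L_\mrm{ha}$ has only nonzero eigenvalues on its range, Riemann--Lebesgue applies: $\int_0^T\langle e^{-\frac t\varepsilon\mathcal L_\mrm{ha}}\mathsf U_\mrm{ha},\phi\rangle\,dt\to0$. This follows by expanding in the eigenmodes \eqref{sol:k-th-mode-horizontal-acoustic}, where each Fourier mode carries a factor $e^{\pm i|\vec k_h|t/\varepsilon}$. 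The remaining error is controlled by the strong convergence \eqref{wdpt-202}. Density and the uniform bounds then give the weak and weak$*$ limits $0$.

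\emph{Vertical acoustic wave.} Here the main difficulty is that the source in \eqref{wdpt-105} is only $O(1/\varepsilon)$. This is why the statement is made for $\varepsilon\,e^{\frac t{\varepsilon^2}\mathcal L_\mrm{va}^\varepsilon}\mathcal U_\mrm{va,\varepsilon}$. By \eqref{wdpt-104} and the isometry \eqref{norm-presering-va}, this quantity is bounded in $L^\infty(0,T;H^3)\cap L^2(0,T;H^4)$. It is also $O(\varepsilon)$ in $L^\infty(0,T;H^2)\cap L^2(0,T;H^3)$, because $\mathcal U_\mrm{va,\varepsilon}$ itself is bounded there. The convergence \eqref{wdpt-203} to $0$ is therefore immediate and even quantitative; \eqref{wdpt-106} is needed only for equicontinuity, if at all.

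For \eqref{wdpt-206}--\eqref{wdpt-207} on $\mathcal U_\mrm{va,\varepsilon}$, I will use the equation \eqref{wdpt-105} in weak form. Multiply by $\varepsilon^2$, test against a smooth $\phi$ with $\phi\in(\ker\mathcal L_\mrm{va}^\varepsilon)^\perp$, and use Lemma \ref{lemma:Lva-decomposition}. This gives
\begin{equation*}
\int_0^T\langle \mathcal U_\mrm{va,\varepsilon},\mathcal L_\mrm{va}^{\varepsilon,*}\phi\rangle\,dt=O(\varepsilon)
\end{equation*}
for the part coming from $\varepsilon^2\dt$ and the $\varepsilon A_\varepsilon\mathcal P^\varepsilon_\mrm{va}S$ source. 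Since $\mathcal L_\mrm{va}^\varepsilon$ is invertible on its range, with the bounds \eqref{bdd-Lvaepsinverse}, every $\varphi$ in the range can be written as $\mathcal L_\mrm{va}^\varepsilon\psi$ with $\psi$ bounded. Moreover, $\mathcal U_\mrm{va,\varepsilon}$ lies in $(\ker\mathcal L_\mrm{va}^\varepsilon)^\perp$ up to the mean, by construction of $\mathcal P^\varepsilon_\mrm{va}$. So the pairing with $\mathcal Q_\mrm{va}^\perp\phi$ tends to $0$, and the pairing with the kernel part vanishes. Boundedness plus density then give weak convergence to $0$.

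The delicate point is the $\varepsilon$-dependence of $\mathcal L_\mrm{va}^\varepsilon$ and of $\mathcal Q_\mrm{va}^\perp$ when acting on barotropic parts, which carries the $1/\varepsilon$ loss in \eqref{bdd-Lvaepsinverse}. I expect to handle it by noting that the components of $\mathcal U_\mrm{va,\varepsilon}$ are baroclinic in $q$ and $\vec v$ (their vertical means vanish). Thus only the $\widetilde\varphi$ bound is needed, and the $1/\varepsilon$ loss never enters.
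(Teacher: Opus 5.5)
Your proposal is correct. It follows the paper's overall scheme: Aubin--Lions for \eqref{wdpt-201}--\eqref{wdpt-202}, Banach--Alaoglu for the weak limits, and a nonresonance argument for the acoustic parts. There are three local differences.

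\emph{The limit \eqref{wdpt-203}.} You observe that \eqref{wdpt-104} and the isometry \eqref{norm-presering-va} already give $\|\varepsilon e^{\frac{t}{\varepsilon^2}\mathcal L^\varepsilon_{\mathrm{va}}}\mathcal U_{\mathrm{va},\varepsilon}\|_{H^2}=\varepsilon\|\mathcal U_{\mathrm{va},\varepsilon}\|_{H^2}=\mathcal O(\varepsilon)$, and likewise in $L^2(0,T;H^3)$. So the paper's appeal to Aubin--Lions there is unnecessary; this is a genuine simplification.

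\emph{The weak limit of $\mathcal U_{\mathrm{va},\varepsilon}$.} Your weak form of $\varepsilon^2\times$\eqref{wdpt-105} is the paper's identity \eqref{id:july06-2026} with the unitary group cancelled. Both rest on the same three facts: $\mathcal U_{\mathrm{va},\varepsilon}\perp\ker\mathcal L^\varepsilon_{\mathrm{va}}$; baroclinicity, which avoids the $1/\varepsilon$ loss in \eqref{bdd-Lvaepsinverse}; and $\varepsilon^2A_\varepsilon\mathcal P^\varepsilon_{\mathrm{va}}(S)=\mathcal O(\varepsilon)$. Working directly with the equation just makes the origin of the $\mathcal O(\varepsilon)$ more visible. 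One detail to state explicitly: the test function $(\mathcal L^\varepsilon_{\mathrm{va}})^{-1}\mathcal Q^\perp_{\mathrm{va}}\widetilde\phi$ depends on $\varepsilon$ and absorbs the $\varepsilon^2\partial_t$. You should therefore take $\phi=\psi_1(t)\psi_2(\vec x)$ with $\psi_1\in C^\infty_c(0,T)$, as the paper does, so that its time derivative is also uniformly bounded.

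\emph{The weak limit of $\mathcal U_{\mathrm{ha}}$.} The paper only says this is analogous to the vertical case. That amounts to integrating by parts in time against $\mathcal L_{\mathrm{ha}}^{-1}$, which is bounded on the range since $|\vec k_h|\geq 2\pi$, and using \eqref{wdpt-1031}. That route gives an explicit $\mathcal O(\varepsilon)$ rate and treats both acoustic waves with one template. You instead combine the strong convergence \eqref{wdpt-202} with a mode-by-mode Riemann--Lebesgue argument and dominated convergence over modes. This needs only nonvanishing frequencies, but yields $o(1)$ without a rate.
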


\begin{proof}
    The strong convergences \eqref{wdpt-201}--\eqref{wdpt-203} follow from the Aubin-Lions compactness theorem. The weak and weak star convergences follow from the Banach-Alaoglu theorem. We will only prove the weak convergence to zero of $\mathcal U_\mrm{ha}$ and $\mathcal U_{\mrm{va},\varepsilon}$. We explain in details for $\mathcal U_{\mrm{va},\varepsilon}$, and the convergence of $\mathcal U_\mrm{ha}$ follows similarly. 

    Consider test functions $\psi_1 = \psi_1(t) \in C_c^\infty(0,T)$ and $\psi_2 = \psi_2(\vec x) \in C^\infty(\mathbb T^2\times 2\mathbb T)$. It is easy to verify that $\mathcal U_{\mrm{va},\varepsilon}\in (\mathrm{ker} \mathcal L_\mrm{va}^\varepsilon)^\perp$ and $ \overline{\mathcal U}_{\mrm{va},\varepsilon} = 0 $,
    thanks to \eqref{wdpt-003} and \eqref{wdpt-104}. 
    As the operator $e^{\frac{t}{\varepsilon^2}\mathcal L_\mrm{va}^\varepsilon}$ is $ H^s $-norm preserving as mentioned in \eqref{norm-presering-va}, one has the identity 
    \begin{equation}\label{id:july06-2026}
    \begin{aligned}
        &\int_0^T \int_{\mathbb T^2\times 2\mathbb T} \mathcal U_\mrm{va,\varepsilon} \psi _1 \psi_2 d\vec x dt = \int_0^T \int_{\mathbb T^2\times 2\mathbb T} \mathcal U_\mrm{va,\varepsilon} \psi _1 \widetilde{\psi}_2 d\vec x dt  = \int_0^T \int_{\mathbb T^2\times 2\mathbb T} \mathcal U_\mrm{va,\varepsilon} \psi _1 \mathcal Q_\mrm{va}^\perp \widetilde{\psi}_2 d\vec x dt
        \\
        = &\int_0^T \int_{\mathbb T^2\times 2\mathbb T}  \left(\varepsilon e^{\frac{t}{\varepsilon^2}\mathcal L_\mrm{va}^\varepsilon}\mathcal U_\mrm{va,\varepsilon}\right) \psi _1 \left( \frac{1}{\varepsilon} e^{\frac{t}{\varepsilon^2}\mathcal L_\mrm{va}^\varepsilon} \mathcal L_\mrm{va}^\varepsilon \underbrace{(\mathcal L_\mrm{va}^\varepsilon)^{-1}\mathcal Q_\mrm{va}^\perp\widetilde{\psi}_2}_{:=\psi_2^*}\right) d\vec x dt
        \\
        = &\int_0^T \int_{\mathbb T^2\times 2\mathbb T} \left(\varepsilon e^{\frac{t}{\varepsilon^2}\mathcal L_\mrm{va}^\varepsilon}\mathcal U_\mrm{va,\varepsilon}\right) \psi_1 \partial_t \left(\varepsilon  e^{\frac{t}{\varepsilon^2}\mathcal L_\mrm{va}^\varepsilon}\psi_2^*\right) d\vec x dt
        \\
        = &-\int_0^T \int_{\mathbb T^2\times 2\mathbb T} \dt \left(\varepsilon e^{\frac{t}{\varepsilon^2}\mathcal L_\mrm{va}^\varepsilon}\mathcal U_\mrm{va,\varepsilon} \psi_1\right) \left(\varepsilon  e^{\frac{t}{\varepsilon^2}\mathcal L_\mrm{va}^\varepsilon}\psi_2^*\right) d\vec x dt,
    \end{aligned}
    \end{equation}
    where we have used the following identity:
    \begin{equation}
        e^{\frac{t}{\varepsilon^2}\mathcal L_\mrm{va}^\varepsilon} ( \psi_1(t) \psi_2(\vec x) )= 
        e^{\frac{s}{\varepsilon^2}\mathcal L_\mrm{va}^\varepsilon} ( \psi_1(t) \psi_2(\vec x) )\vert_{s=t} = 
        \psi_1(t) e^{\frac{s}{\varepsilon^2}\mathcal L_\mrm{va}^\varepsilon}  \psi_2(\vec x) \vert_{s=t} = \psi_1(t) e^{\frac{t}{\varepsilon^2}\mathcal L_\mrm{va}^\varepsilon}  \psi_2(\vec x).
    \end{equation}
    We emphasize that $ \psi_2^* $ is well defined since $ \mathcal Q_\mrm{va}^\perp \widetilde{\psi}_2 \in (\ker \mathcal L_\mrm{va}^\varepsilon)^\perp $.
Moreover, thanks to the fact that $\dt \left(\varepsilon e^{\frac{t}{\varepsilon^2}\mathcal L_\mrm{va}^\varepsilon}\mathcal U_\mrm{va,\varepsilon} \psi_1\right)$ is uniformly-in-$\varepsilon$ bounded in $L^\infty(0,T;L^2) \cap L^2(0,T;H^1)$ from \eqref{wdpt-106} and that $\psi_2^*$ is uniformly bounded with respect to $\varepsilon$ from Lemma~\ref{lemma:Lva-decomposition}, we have that the integral in \eqref{id:july06-2026} converges to $0$ as $\varepsilon\to 0$, which shows that the weak limit of $ \mathcal U_\mrm{va,\varepsilon} $ is zero. Thus this finishes the proof. 
\end{proof}

\subsection{Convergence of the system and proof of Theorem \ref{thm:limit}}
\label{subsec:limit-sys}
Now we are ready to pass the limit $ \varepsilon \rightarrow 0 $ in system \eqref{sys:hst-bsnq-sym}. In fact, \eqref{hbsym-continuity} can be written as 
\begin{equation}
\label{limit:continuity}
    \dvh \vec v + \dz w = - \varepsilon (\dt q + \vec v \cdot \nablah q + w \dz q - w \dz g ) \rightharpoonup \dvh \mathsf v_\sigma + \dz \mathsf w_\sigma = 0,
\end{equation}
weakly in $ L^2(\mathbb T^2 \times 2 \mathbb T \times (0,T)) $, thanks to \eqref{uniform-est-total-01}, \eqref{wdpt-201}, and \eqref{wdpt-204}--\eqref{wdpt-207} in Proposition \ref{prop:compactness}.

On the other hand, from \eqref{hbsym-h-momentum}, one can write 
\begin{equation}
    \label{limit:h-momentum-01}
    \begin{gathered}
    L:= \dt \vec v_\sigma + \vec v_\sigma \cdot \nablah \vec v_\sigma + w_\sigma \dz \vec v_\sigma - \frac{1}{\rho} \Delta \vec v_\sigma  \\
    = \underbrace{- \vec v_\sigma \cdot \nablah (\vec v_\mrm{ha} + \vec v_\mrm{va}) - (\vec v_\mrm{ha} + \vec v_\mrm{va}) \cdot \nablah \vec v_\sigma - w_\mrm{va} \dz \vec v_\sigma - \nablah (\frac{\widetilde q}{\varepsilon}) +\frac{1}{\rho}\Delta (\vec v_\mrm{ha} + \vec v_\mrm{va})}_{=:R_1}  \\
    \underbrace{- \dt \vec v_\mrm{va} - \vec v_\mrm{va} \cdot \nablah (\vec v_\mrm{va} + \vec v_\mrm{ha}) -  \vec v_\mrm{ha} \cdot \nablah \vec v_\mrm{va} - (w_\sigma + w_\mrm{va}) \dz \vec v_\mrm{va}}_{=:R_2}  \\
    \underbrace{- \dt \vec v_\mrm{ha} - \vec v_\mrm{ha} \cdot \nablah \vec v_\mrm{ha} - \frac{1}{\varepsilon} \nablah \overline q}_{=:R_3}.
    \end{gathered}
\end{equation}
Thanks to \eqref{uniform-est-total-01}, \eqref{wdpt-203-01}, \eqref{wdpt-206}, \eqref{wdpt-207}, and \eqref{wdpt-201} in Proposition \ref{prop:compactness}, one can verify that 
\begin{equation}
    \label{limit:h-momentum-02}
    L \rightharpoonup \dt \mathsf v_\sigma + \mathsf v_\sigma \cdot \nablah \mathsf v_\sigma + \mathsf w_\sigma \dz \mathsf v_\sigma - \Delta \mathsf v_\sigma , \qquad \text{weakly in} \ L^2( 0,T;H^1( \mathbb T^2 \times 2 \mathbb T )).
\end{equation}
Meanwhile, thanks to \eqref{wdpt-201} and \eqref{wdpt-204}--\eqref{wdpt-207} in Proposition \ref{prop:compactness}, one can verify that 
\begin{equation}
    \label{limit:h-momentum-03}
    R_1 \rightharpoonup 0 \qquad \text{weakly in} \ L^2(0,T;H^1(\mathbb T^2 \times 2 \mathbb T)).
\end{equation}
On the other hand, 
thanks to \eqref{uniform-est-total-01} and \eqref{wdpt-005}, one has that
\begin{equation}
    \label{limit:h-momentum-04}
    R_2 \rightharpoonup 0 \qquad \text{weakly in the sense of distribution.} 
\end{equation}
Finally, notice that, from \eqref{wdpt-002}, $ \vec v_\mrm{ha} = \nablah \Phi_h(\vec v) = \nablah \Delta_h^{-1} \dvh \overline{\vec v} $, one can write
\begin{equation}
    \label{limit:h-momentum-05}
    R_3 = \nablah (\underbrace{-\dt \Phi_h(\vec v) - \frac{1}{2} \vert \nablah \Phi_h(\vec v) \vert^2 - \frac{\overline q}{\varepsilon}}_{=:- \Pi_\varepsilon}) \qquad \text{with} \quad \dz \Pi_\varepsilon = 0.
\end{equation}
Therefore, sending $ \varepsilon \rightarrow 0$ in \eqref{limit:h-momentum-05} implies that, there exists $\Pi $ such that 
\begin{equation}
    \label{limit:h-momentum-06}
    R_3 \rightharpoonup - \nablah \Pi \qquad \text{with} \quad \dz \Pi = 0.
\end{equation}
Consequently, from \eqref{limit:continuity}--\eqref{limit:h-momentum-06}, one can conclude that $ (\mathsf v_\sigma, \mathsf w_\sigma ) $ is a solution to the limit system \eqref{sys:hst-bsnq-intro}. This finishes the proof of Theorem \ref{thm:limit}.

\section{Some remarks on non-convergence}
\label{sec:non-cnvg}

\subsection{Inviscid case}
\label{subsec:ivc-non-cnvg}

In this section, we examine the inviscid version of system \eqref{sys:hst-bsnq-sym} to illustrate a possible obstruction to extending the uniform convergence result to the inviscid setting.
In particular, we consider the following inviscid system with $ g\equiv 0 $, 
\begin{subequations}
    \label{sys:hst-bsnq-sym-ivc}
    \begin{gather}
\label{hbsym-continuity-ivc}
\dt q + \vec v \cdot \nablah q +  w \dz q  + \frac{1}{\varepsilon} (\dvh \vec v + \dz w)  = 0,  \\
\label{hbsym-h-momentum-ivc}
\begin{gathered}
\dt \vec v + \vec v \cdot \nablah \vec v + w \dz \vec v + \frac{1}{\varepsilon} \nablah q 
= 0,
\end{gathered}\\
\label{hbsym-v-momentum-ivc}
\begin{gathered}
    \varepsilon^2  ( \dt w + \vec v \cdot \nablah w + w \dz w) + \frac{1}{\varepsilon} \dz q 
    = 0.
\end{gathered}
    \end{gather}
\end{subequations}
Moreover, notice that one can consider system \eqref{sys:hst-bsnq-sym-ivc} with $ (q,\vec v, w)^{\top} = (q, v + U, 0, w)^{\top}(x,z,t) $ where $ (q, v, w) = (q,v,w)(x,z,t), \ U = U(z) \in \mathbb R $. That is, we consider the special solution of system \eqref{sys:hst-bsnq-sym-ivc} with only one horizontal spatial dependence and one horizontal velocity component, near a background steady flow $ \vec v = (U(z), 0)^\top $. Then the linearized system of $ (q, v, w) $ is given by
\begin{subequations}
    \label{sys:ivs-linear}
    \begin{align}
        \label{ivs-ln-cnt}
        \dt q + U \partial_x q + \frac{1}{\varepsilon} (\partial_x v + \dz w) & = 0, \\
        \label{ivs-ln-h-mmt}
        \dt v + U \partial_x v + w \dz U + \frac{1}{\varepsilon} \partial_x q & = 0, \\
        \label{ivs-ln-v-mmt}
        \dt w + U \partial_x w + \frac{1}{\varepsilon^3} \dz q & = 0. 
    \end{align}
\end{subequations}
We look for solutions to system \eqref{sys:ivs-linear} of the form 
\begin{equation} 
\label{ivs-ansatz}
(q,v,w)^\top = e^{ik_1x + \lambda t} (\hat q,\hat v, \hat w)^\top(z). \end{equation} 
Moreover, let \begin{equation}\label{def:V-lambda} V_\lambda := \lambda + ik_1 U. \end{equation} 
Then one has that 
\begin{equation}
    \label{ivs-ln-modes}
    \begin{aligned}
        V_\lambda \hat q + \frac{1}{\varepsilon}(ik_1 \hat v + \hat w') & = 0, \\
        V_\lambda \hat v + \hat w U' + \frac{1}{\varepsilon} ik_1 \hat q & = 0, \\
        V_\lambda \hat w + \frac{1}{\varepsilon^3} \hat q' & = 0.
    \end{aligned}
\end{equation}
After a complicated but straightforward computation, one can calculate from \eqref{ivs-ln-modes} that
\begin{gather*}
    \hat w = - \frac{\hat q'}{\varepsilon^3 V_\lambda}, \ \hat w' = - \frac{\hat q''}{\varepsilon^3 V_\lambda} + \frac{\hat q' V_\lambda'}{\varepsilon^3 V_\lambda^2}, \ \hat v = - \frac{ik_1\hat q}{\varepsilon V_\lambda} - \frac{\hat w U'}{V_\lambda} = - \frac{ik_1 \hat q}{\varepsilon V_\lambda} + \frac{\hat q ' U'}{\varepsilon^3 V_\lambda^2}, \\
    - \varepsilon V_\lambda \hat q = ik_1\hat v + \hat w' = \frac{k_1^2 \hat q}{\varepsilon V_\lambda} - \frac{\hat q''}{\varepsilon^3 V_\lambda} + \frac{2 \hat q' V_\lambda'}{\varepsilon^3 V_\lambda^2},
\end{gather*}
and, therefore, one has that
\begin{equation}
    \label{ivs-ln-001}
    \bigl(\frac{\hat q}{V_\lambda}\bigr)'' = L_{\varepsilon,k_1} \bigl( \frac{\hat q}{V_\lambda}\bigr),
\end{equation}
where
\begin{equation}
    \label{ivs-ln-002}
    L_{\varepsilon,k_1} := V_{\lambda} \bigl( \frac{1}{V_\lambda} \bigr)''+ \varepsilon^4 V_\lambda^2 + \varepsilon^2 k_1^2 .
\end{equation}
Next, we look for a special solution of \eqref{ivs-ln-001}--\eqref{ivs-ln-002} with the form
\begin{equation}
    \label{ivs-ln-003}
    L_{\varepsilon, k_1} = \varepsilon^4( a + i b) + \varepsilon^2 k_1^2 \qquad \text{with} \ a,b\in \mathbb R. 
\end{equation}
Then one has that, 
\begin{equation}
    \label{ivs-ln-004}
    \varepsilon^4 (a + ib) = \frac{1}{V_\lambda^2} \bigl( - V_\lambda V_\lambda '' + 2 (V_\lambda')^2 + \varepsilon^4 V_\lambda^4 \bigr).
\end{equation}
In particular, suppose that $ \dz U = 0 $ i.e., $ U $ is a constant, then \eqref{ivs-ln-004} can be reduced to, recalling \eqref{def:V-lambda},
\begin{equation}
    \label{ivs-ln-005}
    a+ i b = V_\lambda^2= \lambda^2 - k_1^2 U^2 + i(2\lambda k_1 U).
\end{equation}
Now we choose a special solution to \eqref{ivs-ln-005}:
\begin{equation}
\label{sol:mode}
    \lambda = \frac{1}{\varepsilon}, \ U = 1, \ a = \frac{1}{\varepsilon^2} - k_1^2, \ b = \frac{2 k_1}{\varepsilon}.
\end{equation}
Then from \eqref{ivs-ln-003}, one can verify that
\begin{equation}
    L_{\varepsilon,k_1}  = \mathcal O(\varepsilon^2),
\end{equation}
from which one can find the corresponding $ (\hat q, \hat v, \hat w) $ from  \eqref{ivs-ln-modes}--\eqref{ivs-ln-001}. From \eqref{ivs-ansatz} and \eqref{sol:mode}, one can conclude that this leads to the instability as $ \varepsilon \rightarrow 0 $ since $ \lambda \rightarrow \infty $.

\subsection{Gravity potential}
\label{subsec:non-prd-non-cnvg}

In this section, we demonstrate that with arbitrary gravity potential, the convergence result may not hold for system \eqref{sys:hst-bsnq-sym}. In particular, we consider the following system with $ \dz g = N \in \mathbb R $ and $ N \neq 0 $: 
\begin{subequations}
    \label{sys:gp-sys}
    \begin{gather}
    \label{hbsym-continuity-gp}
    \dt q + \vec v \cdot \nablah q +  w \dz q  + \frac{1}{\varepsilon} (\dvh \vec v + \dz w)  =   w N,  \\
    \label{hbsym-h-momentum-gp}
    \begin{gathered}
    \dt \vec v + \vec v \cdot \nablah \vec v + w \dz \vec v + \frac{1}{\varepsilon} \nablah q 
    = \frac{1}{\rho} \Delta \vec v,
    \end{gathered}\\
    \label{hbsym-v-momentum-gp}
    \begin{gathered}
        \varepsilon^2  ( \dt w + \vec v \cdot \nablah w + w \dz w) + \frac{1}{\varepsilon} \dz q 
        = \frac{\varepsilon^2}{\rho} \Delta w.
    \end{gathered}
    \end{gather}
\end{subequations}
Moreover, we consider the special case that $ \vec v \equiv  0 $ and $ (q,w) = (q,w)(z,t) $, i.e., only the vertical dynamics and vertical velocity remain. 
Then the leading order linearized system of \eqref{sys:gp-sys} yields 
\begin{subequations}
    \label{sys:gp-ln-sys}
    \begin{align}
        \label{eq:gp-ln-cnt}
    \dt q + \frac{1}{\varepsilon} \dz w & = w N, \\
    \label{eq:gp-ln-mmt}
    \dt w + \frac{1}{\varepsilon^3} \dz q & = \partial_{zz} w.
    \end{align}
\end{subequations}
We look for solutions to system \eqref{sys:gp-ln-sys} of the form $ (q, w) = e^{ik_3 z+\lambda t} (\hat q, \hat w) $ with $ \hat q, \hat w \in \mathbb C $. Then one has that
\begin{equation}
    \label{gp-ln-001}
    \lambda \hat q + \frac{i k_3}{\varepsilon} \hat w = N \hat w , \qquad \lambda \hat w + \frac{i k_3}{\varepsilon^3} \hat q + k_3^2 \hat w = 0.
\end{equation}
Eliminating $ \hat q $ and $ \hat w $ in \eqref{gp-ln-001} yields 
\begin{equation}
    \label{gp-ln-002}
    \varepsilon^4 \lambda^2 + \varepsilon^4  k_3^2 \lambda + k_3^2 + ik_3 \varepsilon N = 0.
\end{equation}
Thus,
\begin{equation}
    \begin{gathered}
    \lambda_\pm  = \frac{-\varepsilon^4 k_3^2 \pm \sqrt{\varepsilon^8 k_3^4 - 4 \varepsilon^4 k_3^2 - i 4\varepsilon^5 N k_3 }}{2\varepsilon^4}= -\frac{k_3^2}{2} \pm \sqrt{\frac{k_3^4}{4} - \frac{k_3^2}{\varepsilon^4} - i\frac{Nk_3}{\varepsilon^3}}. 
    \end{gathered}
\end{equation}
In particular, for $ k_3 = 1 $, one has that, 
\begin{equation}
    \begin{gathered}
        \lambda_\pm\vert_{k_3=1} = -\frac{1}{2} \pm \sqrt{\frac{1}{4} - \frac{1}{\varepsilon^4} - i\frac{N}{\varepsilon^3}} = - \frac{1}{2} \pm \left(-\frac{N}{2\varepsilon} + i(\frac{1}{\varepsilon^2} + \frac{N^2}{8}) \right)+ \mathcal O(\varepsilon). 
    \end{gathered}
\end{equation}
Therefore, for $ \varepsilon $ small enough, 
\begin{equation}
    \label{gp-lambda-positive}
    \max_{\pm}  \mrm{Re} (\lambda_\pm\vert_{k_3=1}) = -\frac{1}{2} + \frac{\vert N \vert}{2\varepsilon}+ \mrm{l.o.t} > 0.
\end{equation}
This corresponds to the instability as $ \varepsilon \rightarrow 0 $ since $ \max_{\pm}  \mrm{Re} (\lambda_\pm\vert_{k_3=1}) \rightarrow \infty $. 

\section*{Acknowledgment}
Q.L. was partially supported by the Simons Foundation (SFI-MPS-TSM-00013384).

\bibliographystyle{plain}
\bibliography{references}

@book{feireislSingularLimitsThermodynamics2017,
    address = {Cham},
    title = {Singular {Limits} in {Thermodynamics} of {Viscous} {Fluids}},
    isbn = {978-3-319-63780-8},
    url = {http://link.springer.com/10.1007/978-3-319-63781-5},
    doi = {10.1007/978-3-319-63781-5},
    publisher = {Springer International Publishing},
    author = {Feireisl, Eduard and Novotn\'y, Anton\'in},
    year = {2017},
    note = {Series Title: Advances in Mathematical Fluid Mechanics
Publication Title: Springer},
}

@techreport{USATMO,
  author      = {{U.S. Committee on Extension to the Standard Atmosphere}},
  title       = {U.S. Standard Atmosphere, 1976},
  institution = {U.S. Government Printing Office},
  address     = {Washington, D.C.},
  year        = {1976}
}

@misc{OceanDepth,
  author      = {{National Ocean Service}},
  title       = {How deep is the ocean?},
  year        = {2026},
  month       = {April 17},
  url         = {https://oceanservice.noaa.gov/facts/oceandepth.html},
  note        = {Accessed: 2026-04-18}
}

@misc{GreatLakes,
  author      = {{Wikipedia contributors}},
  title       = {Great Lakes},
  year        = {2026},
  month       = {April 18},
  url         = {https://en.wikipedia.org/wiki/Great_Lakes},
  organization = {Wikipedia, The Free Encyclopedia},
  note        = {Accessed: 2026-04-18}
}

@article{Cao2007,
    title = {Global well-posedness of the three-dimensional viscous primitive equations of large scale ocean and atmosphere dynamics},
    volume = {166},
    issn = {0003-486X},
    url = {http://annals.math.princeton.edu/2007/166-1/p07},
    doi = {10.4007/annals.2007.166.245},
    number = {1},
    journal = {Annals of Mathematics},
    author = {Cao, Chongsheng and Titi, Edriss},
    month = jul,
    year = {2007},
    note = {arXiv: math/0503028},
    pages = {245--267},
}

@article{liuRigorousJustificationHydrostatic2024,
    title = {Rigorous justification of the hydrostatic approximation limit of viscous compressible flows},
    volume = {464},
    issn = {01672789},
    url = {https://linkinghub.elsevier.com/retrieve/pii/S0167278924001465},
    doi = {10.1016/j.physd.2024.134195},
    language = {en},
    urldate = {2024-06-19},
    journal = {Physica D: Nonlinear Phenomena},
    author = {Liu, Xin and Titi, Edriss S.},
    month = aug,
    year = {2024},
    pages = {134195},
}

@article{liuZeroMachNumber2023,
    title = {Zero {Mach} number limit of the compressible primitive equations: {Ill}-prepared initial data},
    volume = {356},
    issn = {00220396},
    shorttitle = {Zero {Mach} number limit of the compressible primitive equations},
    url = {https://linkinghub.elsevier.com/retrieve/pii/S002203962300044X},
    doi = {10.1016/j.jde.2023.01.031},
    language = {en},
    urldate = {2023-05-21},
    journal = {Journal of Differential Equations},
    author = {Liu, Xin and Titi, Edriss S.},
    month = may,
    year = {2023},
    pages = {1--58},
}

@book{lionsMathematicalTopicsFluid1996,
    title = {Mathematical topics in fluid mechanics},
    author = {Lions, Pierre-Louis},
    month = jan,
    publisher = {Oxford University Press},
    year = {1996},
    note = {MAG ID: 1573547614},
}

@book{Lions1998,
    title = {Mathematical topics in fluid mechanics. {Volume} 2. {Compressible} models},
    isbn = {978-0-19-851488-6 0-19-851488-3},
    publisher = {Oxford University Press},
    author = {Lions, Pierre-Louis},
    year = {1998},
    note = {Series Title: Oxford Lecture Series in Mathematics and Its Applications , Vol 2, No 10
Publication Title: Oxford University Press},
}

@article{azerad2001mathematical,
	author = {Az{\'e}rad, Pascal and Guill{\'e}n, Francisco},
	journal = {SIAM Journal on Mathematical Analysis},
	number = {4},
	pages = {847--859},
	publisher = {SIAM},
	title = {Mathematical justification of the hydrostatic approximation in the primitive equations of geophysical fluid dynamics},
	volume = {33},
	year = {2001}}

@article{furukawa2020rigorous,
	author = {Furukawa, Ken and Giga, Yoshikazu and Hieber, Matthias and Hussein, Amru and Kashiwabara, Takahito and Wrona, Marc},
	journal = {Nonlinearity},
	number = {12},
	pages = {6502},
	publisher = {IOP Publishing},
	title = {Rigorous justification of the hydrostatic approximation for the primitive equations by scaled {N}avier--{S}tokes equations},
	volume = {33},
	year = {2020}}

@article{li2019primitive,
	author = {Li, Jinkai and Titi, Edriss S},
	journal = {Journal de Math{\'e}matiques Pures et Appliqu{\'e}es},
	pages = {30--58},
	publisher = {Elsevier},
	title = {The primitive equations as the small aspect ratio limit of the {N}avier--{S}tokes equations: Rigorous justification of the hydrostatic approximation},
	volume = {124},
	year = {2019}}

@article{li2022primitive,
	author = {Li, Jinkai and Titi, Edriss S and Yuan, Guozhi},
	journal = {Journal of Differential Equations},
	pages = {492--524},
	publisher = {Elsevier},
	title = {The primitive equations approximation of the anisotropic horizontally viscous {3D} {N}avier--{S}tokes equations},
	volume = {306},
	year = {2022}}

@article{blumen1972geostrophic,
	author = {Blumen, William},
	journal = {Reviews of Geophysics},
	number = {2},
	pages = {485--528},
	publisher = {Wiley Online Library},
	title = {Geostrophic adjustment},
	volume = {10},
	year = {1972}}

@article{gill1976adjustment,
	author = {Gill, Adrian E},
	journal = {Journal of Fluid Mechanics},
	number = {3},
	pages = {603--621},
	publisher = {Cambridge University Press},
	title = {Adjustment under gravity in a rotating channel},
	volume = {77},
	year = {1976}}

@book{gill1982atmosphere,
	author = {Gill, Adrian E},
	publisher = {Elsevier},
	title = {Atmosphere-ocean dynamics},
	year = {2016}}

@article{hermann1993energetics,
	author = {Hermann, AJ and Owens, WB},
	journal = {Journal of Physical Oceanography},
	number = {2},
	pages = {346--371},
	title = {Energetics of gravitational adjustment for mesoscale chimneys},
	volume = {23},
	year = {1993}}

@article{holton1973introduction,
	author = {Holton, James R},
	journal = {American Journal of Physics},
	number = {5},
	pages = {752--754},
	publisher = {American Association of Physics Teachers},
	title = {An introduction to dynamic meteorology},
	volume = {41},
	year = {1973}}

@article{kuo1997time,
	author = {Kuo, Allen C and Polvani, Lorenzo M},
	journal = {Journal of Physical Oceanography},
	number = {8},
	pages = {1614--1634},
	title = {Time-dependent fully nonlinear geostrophic adjustment},
	volume = {27},
	year = {1997}}

@article{plougonven2005lagrangian,
	author = {Plougonven, R and Zeitlin, V},
	journal = {Geophysical \& Astrophysical Fluid Dynamics},
	number = {2},
	pages = {101--135},
	publisher = {Taylor \& Francis},
	title = {Lagrangian approach to geostrophic adjustment of frontal anomalies in a stratified fluid},
	volume = {99},
	year = {2005}}

@article{rossby1938mutual,
	author = {Rossby, Carl-Gustav},
	journal = {Journal of Marine Research},
	number = {3},
	pages = {239--263},
	title = {On the mutual adjustment of pressure and velocity distributions in certain simple current systems, {II}},
	volume = {1},
	year = {1938}}

@article{kobelkov2006existence,
	author = {Kobelkov, Georgij M},
	journal = {Comptes Rendus Mathematique},
	number = {4},
	pages = {283--286},
	publisher = {Elsevier},
	title = {Existence of a solution `in the large' for the {3D} large-scale ocean dynamics equations},
	volume = {343},
	year = {2006}}

@article{kukavica2007regularity,
	author = {Kukavica, Igor and Ziane, Mohammed},
	journal = {Nonlinearity},
	number = {12},
	pages = {2739},
	publisher = {IOP Publishing},
	title = {On the regularity of the primitive equations of the ocean},
	volume = {20},
	year = {2007}}

@article{hieber2016global,
	author = {Hieber, Matthias and Kashiwabara, Takahito},
	journal = {Archive for Rational Mechanics and Analysis},
	number = {3},
	pages = {1077--1115},
	publisher = {Springer},
	title = {Global Strong Well-Posedness of the Three Dimensional Primitive Equations in {$L^p$}-Spaces},
	volume = {221},
	year = {2016}}

@article{brenier1999homogeneous,
	author = {Brenier, Yann},
	journal = {Nonlinearity},
	number = {3},
	pages = {495},
	publisher = {IOP Publishing},
	title = {Homogeneous hydrostatic flows with convex velocity profiles},
	volume = {12},
	year = {1999}}

@article{brenier2003remarks,
	author = {Brenier, Yann},
	journal = {Bulletin des Sciences Mathematiques},
	number = {7},
	pages = {585--595},
	publisher = {Elsevier},
	title = {Remarks on the derivation of the hydrostatic {E}uler equations},
	volume = {127},
	year = {2003}}

@article{grenier1999derivation,
	author = {Grenier, Emmanuel},
	journal = {ESAIM: Mathematical Modelling and Numerical Analysis},
	number = {5},
	pages = {965--970},
	publisher = {EDP Sciences},
	title = {On the derivation of homogeneous hydrostatic equations},
	volume = {33},
	year = {1999}}

@article{masmoudi2012h,
	author = {Masmoudi, Nader and Wong, Tak Kwong},
	journal = {Archive for Rational Mechanics and Analysis},
	number = {1},
	pages = {231--271},
	publisher = {Springer},
	title = {On the ${H}^s$ theory of hydrostatic {E}uler equations},
	volume = {204},
	year = {2012}}

@article{renardy2009ill,
	author = {Renardy, Michael},
	journal = {Archive for Rational Mechanics and Analysis},
	number = {3},
	pages = {877--886},
	publisher = {Springer},
	title = {Ill-posedness of the hydrostatic {E}uler and {N}avier--{S}tokes equations},
	volume = {194},
	year = {2009}}

@article{han2016ill,
	author = {Han-Kwan, Daniel and Nguyen, Toan T},
	journal = {Archive for Rational Mechanics and Analysis},
	number = {3},
	pages = {1317--1344},
	publisher = {Springer},
	title = {Ill-posedness of the hydrostatic {E}uler and singular {V}lasov equations},
	volume = {221},
	year = {2016}}

@article{collot2023stable,
	author = {Collot, Charles and Ibrahim, Slim and Lin, Quyuan},
	journal = {Annales de l'Institut Henri Poincar{\'e} C},
	number = {2},
	pages = {317--356},
	title = {Stable singularity formation for the inviscid primitive equations},
	volume = {41},
	year = {2023}}

@article{cao2015finite,
	author = {Cao, Chongsheng and Ibrahim, Slim and Nakanishi, Kenji and Titi, Edriss S},
	journal = {Communications in Mathematical Physics},
	number = {2},
	pages = {473--482},
	publisher = {Springer},
	title = {Finite-time blowup for the inviscid primitive equations of oceanic and atmospheric dynamics},
	volume = {337},
	year = {2015}}

@article{wong2015blowup,
	author = {Wong, Tak Kwong},
	journal = {Proceedings of the American Mathematical Society},
	number = {3},
	pages = {1119--1125},
	title = {Blowup of solutions of the hydrostatic {E}uler equations},
	volume = {143},
	year = {2015}}

@article{ibrahim2026profile,
  title={On the profile of singularity formation for the incompressible hydrostatic Boussinesq system},
  author={Ibrahim, Slim and Lin, Quyuan and Qian, Lingjun and Titi, Edriss S},
  journal={Nonlinearity},
  volume={39},
  number={4},
  pages={045015},
  year={2026},
  publisher={IOP Publishing}
}

@article{ibrahim2021finite,
	author = {Ibrahim, Slim and Lin, Quyuan and Titi, Edriss S},
	journal = {Journal of Differential Equations},
	pages = {557--577},
	publisher = {Elsevier},
	title = {Finite-time blowup and ill-posedness in {S}obolev spaces of the inviscid primitive equations with rotation},
	volume = {286},
	year = {2021}}

@article{hoffGlobalSolutionsNavierStokes1995a,
    title = {Global {Solutions} of the {Navier}-{Stokes} {Equations} for {Multidimensional} {Compressible} {Flow} with {Discontinuous} {Initial} {Data}},
    volume = {120},
    issn = {0022-0396},
    url = {https://www.sciencedirect.com/science/article/pii/S0022039685711114},
    doi = {10.1006/jdeq.1995.1111},
    number = {1},
    urldate = {2026-07-02},
    journal = {Journal of Differential Equations},
    author = {Hoff, D.},
    month = jul,
    year = {1995},
    pages = {215--254},
}

@article{Feireisl2001,
    title = {On the {Existence} of {Globally} {Defined} {Weak} {Solutions} to the {Navier}-{Stokes} {Equations}},
    volume = {3},
    issn = {1422-6928},
    doi = {10.1007/PL00000976},
    number = {4},
    journal = {Journal of Mathematical Fluid Mechanics},
    author = {Feireisl, E. and Novotn\'y, Anton\'in and Petzeltov\'a, H.},
    year = {2001},
    pages = {358--392},
}

@article{Hoff2012,
    title = {Local {Solutions} of a {Compressible} {Flow} {Problem} with {Navier} {Boundary} {Conditions} in {General} {Three}-{Dimensional} {Domains}},
    volume = {44},
    issn = {0036-1410},
    url = {http://epubs.siam.org/doi/abs/10.1137/110827065},
    doi = {10.1137/110827065},
    number = {2},
    journal = {SIAM Journal on Mathematical Analysis},
    author = {Hoff, David},
    month = jan,
    year = {2012},
    pages = {633--650},
}

@article{Itaya1971,
    title = {On the {Cauchy} {Problems} for the {System} of {Fundamental} {Equations} {Describing} the {Movement} of {Compressible} {Viscous} {Fluid}},
    volume = {23},
    journal = {Kodai Math. Sem. Rep.},
    author = {Itaya, Nobutoshi},
    year = {1971},
    pages = {60--120},
}

@article{Klainerman1981,
    title = {Singular limits of quasilinear hyperbolic systems with large parameters and the incompressible limit of compressible fluids},
    volume = {34},
    issn = {00103640},
    url = {http://doi.wiley.com/10.1002/cpa.3160340405},
    doi = {10.1002/cpa.3160340405},
    number = {4},
    journal = {Communications on Pure and Applied Mathematics},
    author = {Klainerman, Sergiu and Majda, Andrew},
    month = jul,
    year = {1981},
    pages = {481--524},
}

@article{Klainerman1982,
    title = {Compressible and incompressible fluids},
    volume = {35},
    issn = {00103640},
    url = {http://doi.wiley.com/10.1002/cpa.3160350503},
    doi = {10.1002/cpa.3160350503},
    number = {5},
    journal = {Communications on Pure and Applied Mathematics},
    author = {Klainerman, Sergiu and Majda, Andrew},
    month = sep,
    year = {1982},
    pages = {629--651},
}

@article{Ukai1986,
    title = {The incompressible limit and the initial layer of the compressible {Euler} equation},
    volume = {26},
    issn = {0023-608X},
    url = {http://projecteuclid.org/euclid.kjm/1250520925},
    doi = {10.1215/kjm/1250520925},
    number = {2},
    journal = {Journal of Mathematics of Kyoto University},
    author = {Ukai, Seiji},
    year = {1986},
    pages = {323--331},
}

@article{Lions1998a,
    title = {Incompressible limit for a viscous compressible fluid},
    volume = {77},
    issn = {00217824},
    doi = {10.1016/S0021-7824(98)80139-6},
    number = {6},
    journal = {Journal des Mathematiques Pures et Appliquees},
    author = {Lions, Pierre-Louis and Masmoudi, Nader},
    year = {1998},
    pages = {585--627},
}

@article{Masmoudi2001,
    title = {Incompressible, inviscid limit of the compressible {Navier}–{Stokes} system},
    volume = {18},
    issn = {02941449},
    url = {https://linkinghub.elsevier.com/retrieve/pii/S0294144900001232},
    doi = {10.1016/S0294-1449(00)00123-2},
    number = {2},
    journal = {Annales de l'Institut Henri Poincare (C) Non Linear Analysis},
    author = {Masmoudi, Nader},
    month = mar,
    year = {2001},
    pages = {199--224},
}

@article{rajagopalOberbeckboussinesqApproximation1996,
    title = {On the oberbeck-boussinesq approximation},
    volume = {06},
    issn = {0218-2025},
    url = {https://www.worldscientific.com/doi/abs/10.1142/s0218202596000481},
    doi = {10.1142/S0218202596000481},
    number = {08},
    urldate = {2026-07-02},
    journal = {Mathematical Models and Methods in Applied Sciences},
    publisher = {World Scientific Publishing Co.},
    author = {Rajagopal, K.r. and Ruzicka, M. and Srinivasa, A.r.},
    month = dec,
    year = {1996},
    pages = {1157--1167},
}

@article{liuLocalWellPosednessStrong2021,
    title = {Local {Well}-{Posedness} of {Strong} {Solutions} to the {Three}-{Dimensional} {Compressible} {Primitive} {Equations}},
    volume = {241},
    issn = {0003-9527, 1432-0673},
    url = {https://link.springer.com/10.1007/s00205-021-01662-3},
    doi = {10.1007/s00205-021-01662-3},
    language = {en},
    number = {2},
    urldate = {2022-04-28},
    journal = {Archive for Rational Mechanics and Analysis},
    author = {Liu, Xin and Titi, Edriss S.},
    month = aug,
    year = {2021},
    pages = {729--764},
}

@article{LT2018b,
    title = {Global {Existence} of {Weak} {Solutions} to the {Compressible} {Primitive} {Equations} of {Atmospheric} {Dynamics} with {Degenerate} {Viscosities}},
    volume = {51},
    issn = {0036-1410},
    url = {https://epubs.siam.org/doi/10.1137/18M1211994},
    doi = {10.1137/18M1211994},
    number = {3},
    journal = {SIAM Journal on Mathematical Analysis},
    author = {Liu, Xin and Titi, Edriss S.},
    month = jan,
    year = {2019},
    pages = {1913--1964},
}

@article{LT2018LowMach1,
    title = {Zero {Mach} {Number} {Limit} of the {Compressible} {Primitive} {Equations}: {Well}-{Prepared} {Initial} {Data}},
    volume = {238},
    issn = {0003-9527},
    url = {http://arxiv.org/abs/1905.09367},
    doi = {10.1007/s00205-020-01553-z},
    number = {2},
    journal = {Archive for Rational Mechanics and Analysis},
    author = {Liu, Xin and Titi, Edriss S.},
    month = nov,
    year = {2020},
    note = {arXiv: 1905.09367},
    pages = {705--747},
}

@misc{necasovaEnergyEqualityCompressible2024,
    title = {Energy equality for the compressible {Primitive} {Equations} with vacuum},
    url = {http://arxiv.org/abs/2303.11129},
    doi = {10.48550/arXiv.2303.11129},
    urldate = {2025-01-01},
    publisher = {arXiv},
    author = {Ne\v{c}asov\'a, \v{S}\'arka and Rodriguez-Bellido, Maria Angeles and Tang, Tong},
    month = dec,
    year = {2024},
    note = {arXiv:2303.11129 [math]},
}

@article{tangDerivationInviscidCompressible2023,
    title = {Derivation of the inviscid compressible {Primitive} {Equations}},
    volume = {139},
    issn = {0893-9659},
    url = {https://www.sciencedirect.com/science/article/pii/S0893965922003974},
    doi = {10.1016/j.aml.2022.108534},
    language = {en},
    urldate = {2023-03-02},
    journal = {Applied Mathematics Letters},
    author = {Tang, Tong and Nečasová, Šárka},
    month = may,
    year = {2023},
    pages = {108534},
}

@article{Ersoy2012,
    title = {Existence of a global weak solution to {Compressible} {Primitive} {Equations}},
    volume = {350},
    issn = {1631073X},
    url = {http://dx.doi.org/10.1016/j.crma.2012.04.013},
    doi = {10.1016/j.crma.2012.04.013},
    number = {7-8},
    journal = {Comptes Rendus Mathematique},
    publisher = {Elsevier Masson SAS},
    author = {Ersoy, Mehmet and Ngom, Timack},
    month = apr,
    year = {2012},
    pages = {379--382},
}

@article{Ersoy2011a,
    title = {Compressible primitive equations: {Formal} derivation and stability of weak solutions},
    volume = {24},
    issn = {09517715},
    doi = {10.1088/0951-7715/24/1/004},
    number = {1},
    journal = {Nonlinearity},
    author = {Ersoy, Mehmet and Ngom, Timack and Sy, Mamadou},
    year = {2011},
    pages = {79--96},
}

@misc{hieberLagrangianApproachCompressible2025,
    title = {The {Lagrangian} approach to the compressible primitive equations},
    url = {http://arxiv.org/abs/2502.03630},
    doi = {10.48550/arXiv.2502.03630},
    urldate = {2025-02-09},
    publisher = {arXiv},
    author = {Hieber, Matthias and Iida, Yoshiki and Roy, Arnab and Zöchling, Tarek},
    month = feb,
    year = {2025},
    note = {arXiv:2502.03630 [math]},
}

@article{gaoHydrostaticApproximationCompressible2022,
    title = {On the {Hydrostatic} {Approximation} of {Compressible} {Anisotropic} {Navier}–{Stokes} {Equations}–{Rigorous} {Justification}},
    volume = {24},
    issn = {1422-6952},
    url = {https://doi.org/10.1007/s00021-022-00717-z},
    doi = {10.1007/s00021-022-00717-z},
    language = {en},
    number = {3},
    urldate = {2023-03-02},
    journal = {Journal of Mathematical Fluid Mechanics},
    author = {Gao, Hongjun and Nečasová, Šárka and Tang, Tong},
    month = jul,
    year = {2022},
    pages = {86},
}

@article{schochetSingularLimitsBounded1987,
    title = {Singular limits in bounded domains for quasilinear symmetric hyperbolic systems having a vorticity equation},
    volume = {68},
    issn = {0022-0396},
    url = {https://www.sciencedirect.com/science/article/pii/0022039687901781},
    doi = {10.1016/0022-0396(87)90178-1},
    number = {3},
    urldate = {2024-04-21},
    journal = {Journal of Differential Equations},
    author = {Schochet, Steven},
    month = jul,
    year = {1987},
    pages = {400--428},
}

@article{schochetUniformExistenceConvergence2022,
    title = {Toward uniform existence and convergence theorems for three-scale systems of hyperbolic {PDEs} with general initial data},
    issn = {0360-5302, 1532-4133},
    url = {https://www.tandfonline.com/doi/full/10.1080/03605302.2022.2129383},
    doi = {10.1080/03605302.2022.2129383},
    language = {en},
    urldate = {2022-10-29},
    journal = {Communications in Partial Differential Equations},
    author = {Schochet, Steve and Xu, Xin},
    month = oct,
    year = {2022},
    pages = {1--43},
}

@article{SchochetCMP1986,
    title = {The compressible {Euler} equations in a bounded domain: {Existence} of solutions and the incompressible limit},
    volume = {104},
    issn = {0010-3616},
    url = {http://link.springer.com/10.1007/BF01210792},
    doi = {10.1007/BF01210792},
    number = {1},
    journal = {Communications in Mathematical Physics},
    author = {Schochet, Steve},
    month = mar,
    year = {1986},
    pages = {49--75},
}

@article{Gallagher1998,
    title = {Applications of {Schochet}'s methods to parabolic equations},
    volume = {77},
    url = {http://epubs.siam.org/doi/10.1137/15M1018253},
    doi = {10.1016/S0021-7824(99)80002-6},
    number = {10},
    journal = {Journal de Mathématiques Pures et Appliquées},
    author = {Gallagher, I.},
    month = dec,
    year = {1998},
    pages = {989--1054},
}

@article{Schochet1994,
    title = {Fast {Singular} {Limits} of {Hyperbolic} {PDEs}},
    volume = {114},
    issn = {00220396},
    url = {http://www.sciencedirect.com/science/article/pii/S0022039684711570},
    doi = {10.1006/jdeq.1994.1157},
    number = {2},
    journal = {Journal of Differential Equations},
    author = {Schochet, Steve},
    month = dec,
    year = {1994},
    pages = {476--512},
}

@article{embidAveragingFastGravity1996,
    title = {Averaging over fast gravity waves for geophysical flows with arbitary potential vorticity},
    volume = {21},
    issn = {0360-5302, 1532-4133},
    url = {https://www.tandfonline.com/doi/full/10.1080/03605309608821200},
    doi = {10.1080/03605309608821200},
    language = {en},
    number = {3-4},
    urldate = {2022-05-03},
    journal = {Communications in Partial Differential Equations},
    author = {Embid, Pedro F. and Majda, Andrew J.},
    month = jan,
    year = {1996},
    pages = {619--658},
}

@article{majdaAveragingFastGravity1998,
    title = {Averaging over {Fast} {Gravity} {Waves} for {Geophysical} {Flows} with {Unbalanced} {Initial} {Data}},
    volume = {11},
    issn = {0935-4964, 1432-2250},
    url = {http://link.springer.com/10.1007/s001620050086},
    doi = {10.1007/s001620050086},
    language = {en},
    number = {3-4},
    urldate = {2022-05-03},
    journal = {Theoretical and Computational Fluid Dynamics},
    author = {Majda, Andrew J. and Embid, Pedro},
    month = jun,
    year = {1998},
    pages = {155--169},
}

@article{embidLowFroudeNumber1998,
    title = {Low {Froude} number limiting dynamics for stably stratified flow with small or finite {Rossby} numbers},
    volume = {87},
    issn = {0309-1929, 1029-0419},
    url = {http://www.tandfonline.com/doi/abs/10.1080/03091929808208993},
    doi = {10.1080/03091929808208993},
    language = {en},
    number = {1-2},
    urldate = {2022-06-22},
    journal = {Geophysical and Astrophysical Fluid Dynamics},
    author = {Embid, Pedro F. and Majda, Andrew J.},
    month = mar,
    year = {1998},
    pages = {1--50},
}

@article{Klein2006,
    title = {Systematic multiscale models for deep convection on mesoscales},
    volume = {20},
    issn = {09354964},
    doi = {10.1007/s00162-006-0027-9},
    number = {5-6},
    journal = {Theoretical and Computational Fluid Dynamics},
    author = {Klein, Rupert and Majda, Andrew J.},
    year = {2006},
    pages = {525--551},
}

@book{MajdaAtmosphereOcean,
    title = {Introduction to {PDEs} and {Waves} for the {Atmosphere} and {Ocean}},
    isbn = {0-8218-2954-8},
    issn = {1529-9031},
    publisher = {American Mathematical society},
    author = {Majda, Andrew},
    year = {2003},
    note = {Series Title: Courant Lecture Notes in Mathematics 9},
}

@article{chengThreeScaleSingularLimits2018,
    title = {Three-{Scale} {Singular} {Limits} of {Evolutionary} {PDEs}},
    volume = {229},
    issn = {1432-0673},
    url = {https://doi.org/10.1007/s00205-018-1233-5},
    doi = {10.1007/s00205-018-1233-5},
    language = {en},
    number = {2},
    urldate = {2026-07-03},
    journal = {Archive for Rational Mechanics and Analysis},
    author = {Cheng, Bin and Ju, Qiangchang and Schochet, Steve},
    month = aug,
    year = {2018},
    pages = {601--625},
}

@article{schochetModeratelyFastThreeScale2020,
    title = {Moderately {Fast} {Three}-{Scale} {Singular} {Limits}},
    volume = {52},
    issn = {0036-1410},
    url = {https://epubs.siam.org/doi/10.1137/19M1287109},
    doi = {10.1137/19M1287109},
    number = {4},
    urldate = {2026-07-03},
    journal = {SIAM Journal on Mathematical Analysis},
    publisher = {Society for Industrial and Applied Mathematics},
    author = {Schochet, Steven and Xu, Xin},
    month = jan,
    year = {2020},
    pages = {3444--3462},
}

@article{Klein2010,
    title = {Regime of {Validity} of {Soundproof} {Atmospheric} {Flow} {Models}},
    volume = {67},
    issn = {1520-0469},
    url = {https://journals.ametsoc.org/jas/article/67/10/3226/104324/Regime-of-Validity-of-Soundproof-Atmospheric-Flow},
    doi = {10.1175/2010JAS3490.1},
    number = {10},
    journal = {Journal of the Atmospheric Sciences},
    author = {Klein, Rupert and Achatz, Ulrich and Bresch, Didier and Knio, Omar M. and Smolarkiewicz, Piotr K.},
    month = oct,
    year = {2010},
    pages = {3226--3237},
}

@article{breschSoundproofModelAcoustic2022,
    title = {The {Soundproof} {Model} of an {Acoustic}–internal {Waves} {System} with {Low} {Stratification}},
    volume = {24},
    issn = {1422-6928, 1422-6952},
    url = {https://link.springer.com/10.1007/s00021-022-00712-4},
    doi = {10.1007/s00021-022-00712-4},
    language = {en},
    number = {4},
    urldate = {2022-08-08},
    journal = {Journal of Mathematical Fluid Mechanics},
    author = {Bresch, Didier and Klein, Rupert and Liu, Xin},
    month = nov,
    year = {2022},
    pages = {95},
}

\end{document}